\author{David Wallauch}
\address{Universität Wien, Fakultät für Mathematik,
  Oskar-Morgenstern-Platz 1, 1090 Vienna, Austria}
\email{david.wallauch@univie.ac.at}
\thanks{This work was supported by the Austrian Science Fund FWF,
  Projects P 30076: ``Self-similar blowup in dispersive wave equations''
  and P 34560: ``Stable blowup in supercritical wave equations''.
  The author would also like to thank Prof. Roland Donninger for valuable tips and helpful discussions}
\title{Strichartz estimates and Blowup stability for energy critical nonlinear wave equations}
\numberwithin{equation}{section}
\newcommand{\C}{\mathbb{C}}
\newcommand{\R}{\mathbb{R}}
\newtheorem{thm}{Theorem}[section]
\newtheorem{defi}{Definition}[section]
\newtheorem{lem}{Lemma}[section]
\renewcommand{\O}{\mathcal{O}}
\newcommand{\rg}{\textup{\textbf{rg}}}
\renewcommand{\ker}{\textup{\textbf{ker}}}
\newcommand{\hfh}{\textup{\textbf{h}}}
\newcommand{\Cf}{\textup{\textbf{C}}}
\newcommand{\X}{\mathcal{X}}
\newcommand{\Nf}{\textup{\textbf{N}}}
\newcommand{\Lf}{\textup{\textbf{L}}}
\newcommand{\Af}{\textup{\textbf{A}}}
\newcommand{\Ef}{\textup{\textbf{E}}}
\newcommand{\hf}{\textup{\textbf{h}}}
\newcommand{\K}{\textup{\textbf{K}}}
\newcommand{\Sf}{\textup{\textbf{S}}}
\newcommand{\I}{\textup{\textbf{I}}}
\newcommand{\Uf}{\textup{\textbf{U}}}
\newcommand{\uf}{\textup{\textbf{u}}}
\newcommand{\vf}{\textup{\textbf{v}}}
\newcommand{\Rf}{\textup{\textbf{R}}}
\newcommand{\gf}{\textup{\textbf{g}}}
\newcommand{\ff}{\textup{\textbf{f}}}
\newcommand{\Pf}{\textup{\textbf{P}}}
\renewcommand{\Re}{\operatorname{Re}}
\renewcommand{\Im}{\operatorname{Im}}
\newcommand{\B}{\mathbb{B}}
\renewcommand{\H}{\mathcal{H}}
\begin{document}

      \begin{abstract}
        We prove Strichartz estimates in similarity coordinates for the radial wave equation with a self similar potential in dimensions $d\geq 3$. As an application of these, we establish the asymptotic stability of the ODE blowup profile of the energy critical radial nonlinear wave equation for $3\leq d\leq 6$.
      \end{abstract}
      \maketitle
\section{Introduction}
We consider the focusing energy critical nonlinear wave equation 
\begin{equation}\label{Eq:startingeq}
\begin{cases}\left(\partial_t^2-\Delta\right)u(t,x)=u(t,x)|u(t,x)|^{\frac{4}{d-2}}
\\
u[0]=(f,g)
\end{cases}
\end{equation}
for a fixed dimension $d\geq 3$ and $u:I\times \R^{d}\to \R$, for some interval $0\in I\subset\R$ and $u[t]=(u(t,.),\partial_t u(t,.))$.
When studying Eq.~\eqref{Eq:startingeq} in a low regularity setting, one can of course not hope for classical solutions and instead has to resort to a different notion of a solution. For Eq. \eqref{Eq:startingeq} this can be achieved by using Duhamel's formula,
\begin{align} \label{notion solution}
u(t,.)=\cos(t|\nabla|)f+\frac{\sin(t|\nabla|)}{|\nabla|}g+\int_0^t\frac{\sin((t-s)|\nabla|)}{|\nabla|}u(s,.)|u(s,.)|^{\frac{4}{d-2}} ds,
\end{align}
where  $\cos(t|\nabla|)$ and $\frac{\sin(t|\nabla|)}{|\nabla|}$ are the standard wave propagators. 
This weak formulation is a sensible expression for $(f,g)\in \dot{H}^1\times L^2(\R^d)$ and moreover, it is known that Eq. \eqref{Eq:startingeq} is locally well-posed in this regularity class (see \cite{LinSog95,BulCzuLiPav13}). Furthermore, this is the lowest possible regularity in which the equation is not ill-posed.  The key difficulty to prove local well-posedness of Eq. \eqref{Eq:startingeq} stems from the fact that the nonlinearity cannot be controlled by Sobolev embedding, as this would require the embedding $\dot{H^1}(\R^d)\hookrightarrow L^{\frac{2d+4}{d-2}}(\R^d)$, which evidently does not hold.
In dimensions smaller than $7$, this can be remedied by employing Strichartz estimates.
These are spacetime estimates of the form
\begin{align*}
\left\|\frac{\sin(t|\nabla|)}{|\nabla|}g \right\|_{L^{\frac{d+2}{d-2}}_t(\R)L^{\frac{2d+4}{d-2}}(\R^d)}\lesssim \|g\|_{L^2(\R^d)}
\end{align*}
and variants thereof. Loosely, the improved control given by these estimates, is a consequence of exploiting the dispersive nature of Eq. \eqref{Eq:startingeq}. \\
One of the most intriguing features of Eq. \eqref{Eq:startingeq} is the occurrence of singularities in finite time. An example of this phenomenon is given by the explicit solution
\[
u^T(t):=c_d(T-t)^{\frac{2-d}{2}}
\quad c_d=\left(\frac{d(d-2)}{4}\right)^{\frac{d-2}{4}},
\]
where $T>0$ can be freely choosen. To investigate whether this solution is an isolated example of this kind of blowup or if it actually plays a role in more generic evolutions, one has to study the stability of this family of solutions. 
To do so, we restrict ourselves to radial initial data for which Eq. \eqref{Eq:startingeq}
turns into
\begin{equation}\label{Eq:startingeq2}
\begin{cases}\left(\partial_t^2-\partial_r^2-\frac{d-1}{r}\partial_r\right)u(t,r)=u(t,r)|u(t,r)|^{\frac{4}{d-2}}
\\
u[0]=(f,g),
\end{cases}
\end{equation} 
for $r=|x|$ and where we identify functions with their radial representative.
Moreover,  we employ the finite speed of propagation property thanks to which we can study Eq. \eqref{Eq:startingeq2} in the backwards lightcone $\Gamma_T:=\{(t,r): t\in [0,T), r \in [0, T-t]\}.$
For this domain, especially useful coordinates are given by the similarity coordinates
\begin{align*}
\rho:=\frac{r}{T-t},\qquad\tau:=-\log(T-t)+\log(T).
\end{align*}
 In these coordinates we then linearise the nonlinearity around the blowup solution $u^T$
and set $\psi(\tau,\rho)=(Te^{-\tau})^{\frac{d-2}{2}}u(T-Te^{-\tau}, Te^{-\tau}\rho)$
and
\[\psi_1(\tau,\rho)=\psi(\tau,\rho),\qquad \psi_2(\tau,\rho)=\partial_\tau\psi(\tau,\rho)+\rho\partial_\rho\psi(\tau,\rho)+\frac{d-2}{2}\psi(\tau,\rho) \]
 to obtain an abstract evolution equation of the form
 \[\partial_\tau \Psi(\tau)=\widehat{\Lf} \Psi(\tau)+ \Nf(\Psi(\tau)),\]
 where $\widehat{\Lf}$ is a linear spatial differential operator.
 More precisely, 
 $\widehat{\Lf}$ formally takes the form 
 \begin{align*}
\widehat{\Lf} \begin{pmatrix}
u_1(\rho)\\
u_2(\rho)
\end{pmatrix}
=\begin{pmatrix}
-\rho u_1'(\rho)-\frac{d-2}{2}u_1(\rho)+u_2(\rho)\\
u_1''(\rho)+\frac{d-1}{\rho}u_1'(\rho)-\rho u_2'(\rho)-\frac{d}{2}u_2(\rho)
\end{pmatrix}+
\begin{pmatrix}
0\\
\frac{2d+d^2}{4}u_1(\rho)
\
\end{pmatrix}.
\end{align*}
 Our first theorem are the following Strichartz estimates.
\begin{thm}\label{thm:strichartz}
Let $d\geq 3$ be a fixed natural number and define the domain of $\widehat{\Lf}$ to be $C^2\times C^1(\overline{\B^d_1})$. Then $\widehat{\Lf}$ is closable and its closure $\Lf$ generates a semigroup $\Sf$ on $H^1\times L^2(\B^d_1)$ such that the following holds. There exists a one dimensional subspace $\Uf\subset H^1\times L^2(\B^d_1)$ and a bounded projection $\Pf:H^1\times L^2(\B^d_1)\to \Uf$ such that 
\begin{align*}
\|\Sf(\tau) \Pf \ff \|_{H^1\times L^2 (\B^d_1)}\lesssim e^{\tau}\|\ff\|_{H^1\times L^2(\B^d_1)}
\end{align*}
holds for all $\ff \in H^1\times L^2(\B^d_1)$ and all $\tau \geq 0$.
Moreover,
for $p\in [2,\infty]$ and $ q\in [\frac{2d}{d-2},\frac{2d}{d-3}]$ with
$\frac{1}{p}+\frac{d}{q}=\frac{d}{2}-1$,
 the bound 
\begin{align*}
\|[\Sf(\tau)(\I-\Pf)\ff]_1\|_{L^p_\tau(\R^+)L^q(\B^d_1)}\lesssim \|(\I-\Pf)\ff\|_{H^1\times L^2(\B^d_1)}
\end{align*}
holds for all $\ff \in H^1\times L^2(\B^d_1)$.
Additionally, the inhomogenous estimate 
\begin{align*}
\left\|\int_0^\tau[\Sf(\tau-\sigma)(\I-\Pf)\hf(\sigma,.)]_1 d \sigma\right\|_{L^p_\tau(I)L^q(\B^d_1)}\lesssim \|(\I-\Pf)\hf(\tau,.)\|_{L^1_\tau(I)H^1\times L^2(\B^d_1)}
\end{align*}
holds for all $\hf \in C([0,\infty),H^1\times L^2(\B^d_1))\cap L^1([0,\infty),H^1\times L^2(\B^d_1))$ and all intervals $I=[0,\tau_0)\subset[0,\infty)$.
\end{thm}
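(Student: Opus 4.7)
The plan is to combine standard semigroup theory with a transfer of known Strichartz estimates on $\R^d$ into similarity coordinates on the cone, then close the full statement by treating the potential term $\frac{2d+d^2}{4}$ as a perturbation and using Duhamel.

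First, I would split $\widehat{\Lf}=\widehat{\Lf}_0+\Lf'$ where $\widehat{\Lf}_0$ is the free part (similarity coordinate wave operator on $\B^d_1$) and $\Lf'$ is the bounded multiplication by $\frac{2d+d^2}{4}$ on the first component. For $\widehat{\Lf}_0$ I would apply Lumer--Phillips in the energy norm $H^1\times L^2(\B^d_1)$: one shows dissipativity after subtracting a suitable multiple of the identity (the similarity boundary $\rho=1$ is characteristic, which gives a natural dissipative boundary term), and range density follows from solving a coercive elliptic problem. Since $\Lf'$ is bounded, the bounded perturbation theorem yields that $\Lf=\overline{\widehat{\Lf}_0+\Lf'}$ generates a strongly continuous semigroup $\Sf$.

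Next, to isolate the instability, I would locate the spectrum of $\Lf$ in a right half-plane. The key observation is that differentiating the family $u^T$ in $T$ yields an explicit eigenfunction of $\Lf$ with eigenvalue $\lambda=1$, coming from the time-translation symmetry of the blowup solution. A resolvent analysis (via ODE methods for the eigenvalue equation on $[0,1]$) should show that $1$ is the unique eigenvalue in $\{\Re\lambda\geq 0\}$, is simple, and that the rest of $\sigma(\Lf)$ lies in $\{\Re\lambda\leq-\omega\}$ for some $\omega>0$. The Riesz projection onto this eigenspace defines $\Pf$ and $\Uf:=\rg\Pf$, and the growth bound $\|\Sf(\tau)\Pf\ff\|\lesssim e^\tau\|\ff\|$ is immediate.

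The heart of the theorem is the Strichartz bound on the stable subspace. Here I would proceed by going back from similarity coordinates to physical coordinates on the cone $\Gamma_T$. For the \emph{free} (potential-free) semigroup $\Sf_0$, the first component of $\Sf_0(\tau)(\I-\Pf)\ff$ corresponds, via the transformation $\psi(\tau,\rho)=(Te^{-\tau})^{\frac{d-2}{2}}u(T-Te^{-\tau},Te^{-\tau}\rho)$, to a free radial wave $u$ on $\Gamma_T$. Using finite speed of propagation, I would extend the data $(\I-\Pf)\ff$ from $\B^d_1$ to a compactly supported element of $\dot H^1\times L^2(\R^d)$ with comparable norm, solve the free wave equation on $\R^d$, and apply the classical wave Strichartz estimates for the admissible range $\frac{1}{p}+\frac{d}{q}=\frac{d}{2}-1$, $p\in[2,\infty]$, $q\in[\tfrac{2d}{d-2},\tfrac{2d}{d-3}]$. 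Changing variables back, the Jacobian factors $(T-t)^{\cdot}$ combine with the weight $(Te^{-\tau})^{\frac{d-2}{2}}$ to yield precisely the similarity coordinate estimate, because the scaling exponent $\frac{d-2}{2}$ matches the $\dot H^1$ scaling condition. The additional decay $e^{-\omega\tau}$ on the stable subspace ensures the $\tau$-integrability used to pass to $L^p_\tau(\R^+)$.

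To promote this from $\Sf_0$ to $\Sf$, I would write $\Sf(\tau)=\Sf_0(\tau)+\int_0^\tau \Sf_0(\tau-\sigma)\Lf'\Sf(\sigma)\,d\sigma$, bound the perturbation term by the inhomogeneous Strichartz estimate for $\Sf_0$ (the $L^1_\tau H^1\times L^2\to L^p_\tau L^q$ bound follows from the homogeneous one plus a Minkowski/Christ--Kiselev argument), and absorb the resulting $e^{-\omega\tau}$-weighted integral on the stable subspace. This simultaneously yields the inhomogeneous estimate in the statement, once one checks that $(\I-\Pf)$ commutes with $\Sf$ on the stable subspace and that the Duhamel iteration converges in the Strichartz norm.

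The main obstacle I expect is the transfer step: carefully justifying the extension from $\B^d_1$ to $\R^d$ (making the extended data live in $\dot H^1\times L^2$ with control on norms), then tracking how the Jacobian of the similarity change of variables interacts with the Lebesgue exponents so that the $\dot H^1$-scaling Strichartz pair on $\R^d$ produces exactly the stated $L^p_\tau L^q$ bound on the cone after rescaling. Handling the unstable eigenvalue $\lambda=1$ rigorously—showing it is the only non-stable point of $\sigma(\Lf)$—is the other subtle point, as it requires a detailed ODE analysis of the eigenvalue equation associated with $\Lf$.
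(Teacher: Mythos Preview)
Your setup for the semigroup (Lumer--Phillips for the free part, bounded perturbation for $\Lf'$), the identification of the eigenvalue $1$ from time translation, and the transfer of classical Strichartz estimates on $\R^d$ to similarity coordinates for the \emph{free} semigroup $\Sf_0$ are all essentially what the paper does. The gap is in the step from $\Sf_0$ to $\Sf$.

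You assume that ``the rest of $\sigma(\Lf)$ lies in $\{\Re\lambda\leq-\omega\}$ for some $\omega>0$'' and then rely on the resulting exponential decay $e^{-\omega\tau}$ on the stable subspace to make the Duhamel iteration
\[
\Sf(\tau)=\Sf_0(\tau)+\int_0^\tau \Sf_0(\tau-\sigma)\Lf'\Sf(\sigma)\,d\sigma
\]
converge in $L^p_\tau(\R_+)L^q$. But there is no spectral gap here: the paper only obtains $\sigma(\Lf)\subset\{\Re\lambda\le 0\}\cup\{1\}$ and, via Gearhart--Pr\"uss, the growth bound $\|\Sf(\tau)(\I-\Pf)\|\le C_\varepsilon e^{\varepsilon\tau}$ for every $\varepsilon>0$. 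In these similarity-coordinate problems the essential spectrum of $\Lf_0$ (unchanged by the compact $\Lf'$) reaches the imaginary axis, so genuine exponential decay on $\ker\Pf$ is not available. Consequently $\sigma\mapsto\|\Lf'\Sf(\sigma)(\I-\Pf)\ff\|_{\mathcal H}$ is \emph{not} in $L^1(\R_+)$, the inhomogeneous free Strichartz estimate cannot be applied to the Duhamel term over the half-line, and there is no smallness to absorb since $\|\Lf'\|\simeq\tfrac{2d+d^2}{4}$ is order one. Your closing mechanism therefore fails.

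This is precisely why the paper takes a completely different route for the passage $\Sf_0\to\Sf$: it represents $\Sf(\tau)(\I-\Pf)$ via Laplace inversion, constructs the Green function of the spectral ODE asymptotically in $\lambda$ (Liouville--Green transform to a Bessel equation near $\rho=0$, direct Volterra construction near $\rho=1$, connection coefficients), decomposes the \emph{difference} $G-G_{\mathrm f}$ into explicit pieces with quantified symbol behavior, and then proves the Strichartz bounds by oscillatory-integral estimates after pushing the contour to the imaginary axis. The gain over your Duhamel approach is that the needed time-integrability comes from oscillation of $e^{i\omega\tau}$ against the resolvent kernel, not from spectral decay of the semigroup.
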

Before we come to our second theorem, we would like to make the following remarks.
\begin{itemize}
\item
The one dimensional unstable subspace stems from linearising around $u^T$. This produces the eigenvalue $1$, which corresponds to the time translation symmetry of the equation and hence is not a real instability.
\item
In Cartesian coordinates the potential obtained from the linearisation has a singularity at the tip of the lightcone $\Gamma_T$. Our result therefore shows Strichartz estimates for a wave equation in a lightcone with a potential that exhibits singular behavior at the lightcone's tip.
\item
The method we employ is very robust and can easily be adopted to other potentials provided one has enough spectral information.
\end{itemize}

These Strichartz estimates are the main tool used to investigate the blowup stability of $u^T$ in the critical topology, i.e., to prove the following result.
\begin{thm}\label{thm: stability}
Let $3\leq d\leq 6$ be a fixed natural number. Then there exist constants $M>1$ and $\delta_0>0$ such that for $\delta \in (0,\delta_0)$ the following holds. Let $(f,g)\in H^1\times L^2(\B^d_{1+\delta})$ be such that
\begin{align*}
\|(f,g)-u^1[0]\|_{H^1\times L^2(\B^d_{1+\delta})}\leq \frac{\delta}{M}.
\end{align*} 
Then there exists a unique solution $u$ to Eq. \eqref{Eq:startingeq2} and a $T$ in $[1-\delta, 1+\delta]$ such that
\begin{align}\label{estimatethm}
 \int_0^T\|u(t,.)-u^T(t,.)\|_{L^{\frac{2d}{d-3}}(\B^d_{T-t})}^2 dt\leq \delta^2.
\end{align}
\end{thm}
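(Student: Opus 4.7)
\emph{Proof plan.} The strategy is to pass to similarity coordinates with the blowup time $T$ as a free parameter, recast the problem as an abstract evolution equation for the perturbation of the (time-independent) rescaled blowup profile, and close a Banach fixed-point argument built on the Strichartz estimates of Theorem~\ref{thm:strichartz}, while handling the one-dimensional growing mode of $\Lf$ by a scalar modulation of $T$. Following the change of variables in the introduction, for each $T \in [1-\delta,1+\delta]$ the equation becomes
\begin{equation*}
\partial_\tau \Phi(\tau) = \Lf\,\Phi(\tau) + \Nf(\Phi(\tau)), \qquad \Phi(0) = \Uf(\ff,T),
\end{equation*}
where $\Nf$ denotes the quadratic-and-higher Taylor remainder of the nonlinearity around $u^T$ and $\Uf(\ff,T)$ is the perturbation initial datum in the rescaled frame, built from $\ff = (f,g) - u^1[0]$. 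A short computation (together with a standard extension from $\B^d_1$ to $\B^d_{1+\delta}$ to accommodate the shift of the blowup time) yields $\|\Uf(\ff,T)\|_{H^1\times L^2(\B^d_1)} \lesssim \delta$ with Lipschitz dependence on $T$.

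Since $\Sf(\tau)\Pf = e^\tau \Pf$ grows exponentially, the unstable direction is absorbed by working with the modified Duhamel map
\begin{equation*}
\mathcal{K}(\Phi,T)(\tau) := \Sf(\tau)\Uf(\ff,T) + \int_0^\tau \Sf(\tau-\sigma)\Nf(\Phi(\sigma))\,d\sigma - \mathcal{C}(\Phi,T)(\tau),
\end{equation*}
with correction
\begin{equation*}
\mathcal{C}(\Phi,T)(\tau) := e^\tau \Pf\Big(\Uf(\ff,T) + \int_0^\infty e^{-\sigma}\Nf(\Phi(\sigma))\,d\sigma\Big).
\end{equation*}
A direct calculation gives $\Pf\mathcal{K}(\Phi,T)(\tau) = -e^\tau \int_\tau^\infty e^{-\sigma}\Pf\Nf(\Phi(\sigma))\,d\sigma$, which is uniformly bounded in $\tau$, and any fixed point of $\mathcal{K}(\cdot,T)$ satisfying $\mathcal{C}(\Phi,T)\equiv 0$ is a genuine mild solution of the abstract problem. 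I would run the fixed-point argument in
\begin{equation*}
X := \bigl\{\Phi\in C([0,\infty),H^1\times L^2(\B^d_1)):\|\Phi\|_X\leq c\delta\bigr\},
\end{equation*}
\begin{equation*}
\|\Phi\|_X := \sup_{\tau\geq 0}\|\Phi(\tau)\|_{H^1\times L^2(\B^d_1)} + \|[\Phi]_1\|_{L^2_\tau L^{2d/(d-3)}(\B^d_1)},
\end{equation*}
where the Strichartz pair $(p,q)=(2,\,2d/(d-3))$ sits at the endpoint of the admissible range of Theorem~\ref{thm:strichartz} (as $\frac{1}{p}+\frac{d}{q}=\frac{d}{2}-1$) and matches the topology appearing in \eqref{estimatethm}. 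The supremum part of the norm is controlled by the growth bound on $\Sf(\tau)\Pf$ together with the uniform boundedness of $\Sf(\tau)(\I-\Pf)$, while the spacetime part is controlled by the homogeneous and inhomogeneous Strichartz bounds applied to the $\I-\Pf$ pieces.

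The crux of the argument, and the source of the restriction $d\leq 6$, is the nonlinear estimate
\begin{equation*}
\|\Nf(\Phi)-\Nf(\Phi')\|_{L^1_\tau(H^1\times L^2(\B^d_1))} \lesssim \bigl(\|\Phi\|_X+\|\Phi'\|_X\bigr)^{4/(d-2)}\|\Phi-\Phi'\|_X.
\end{equation*}
For $4/(d-2)\geq 1$, i.e.\ $d\leq 6$, the Taylor remainder of $u\mapsto u|u|^{4/(d-2)}$ is pointwise dominated by $|u^T|^{4/(d-2)-1}|\phi|^2 + |\phi|^{1+4/(d-2)}$ with a genuine second derivative (collapsing to a plain quadratic when $d=6$), and the resulting spacetime integrals close via Hölder in $\tau$, the Strichartz control on $[\Phi]_1$ in $L^2_\tau L^{2d/(d-3)}$, and the Sobolev embedding $H^1(\B^d_1)\hookrightarrow L^{2d/(d-2)}(\B^d_1)$; for $d\geq 7$ the exponent drops below $1$ and the merely Hölder character of the nonlinearity forces a loss that Strichartz cannot absorb. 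Once $\mathcal{K}(\cdot,T)$ is shown to be a contraction on $X$ for each $T\in[1-\delta,1+\delta]$, one obtains a fixed point $\Phi_T$ depending continuously on $T$. Finding $T_0$ with $\mathcal{C}(\Phi_{T_0},T_0)\equiv 0$ then reduces, by the one-dimensionality of $\rg\Pf$, to a scalar equation for $T$; the linear part at $\ff=0$ is a non-degenerate function of $T$ because the unstable direction is exactly the infinitesimal generator of the time-translation symmetry of $u^T$, so an intermediate-value argument on $[1-\delta,1+\delta]$ produces such a $T_0$. Finally, undoing the similarity change of variables on the spacetime bound for $[\Phi_{T_0}]_1$ yields \eqref{estimatethm}.
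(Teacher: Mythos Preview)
Your overall architecture—similarity coordinates, modified Duhamel map with the correction $\mathcal{C}$, Banach fixed point, and an intermediate-value argument to select $T$—is exactly the paper's. The difference lies in the function space, and there the proposal has a genuine gap.

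You propose to run the contraction in
\[
\|\Phi\|_X = \sup_{\tau\geq 0}\|\Phi(\tau)\|_{H^1\times L^2(\B^d_1)} + \|[\Phi]_1\|_{L^2_\tau L^{2d/(d-3)}(\B^d_1)},
\]
and you justify the supremum piece by appealing to ``the uniform boundedness of $\Sf(\tau)(\I-\Pf)$''. But the paper does \emph{not} establish that: the available energy estimate on the stable subspace is only $\|\Sf(\tau)(\I-\Pf)\ff\|_{\mathcal H}\leq C_\varepsilon e^{\varepsilon\tau}\|\ff\|_{\mathcal H}$ for every $\varepsilon>0$, obtained via Gearhart--Pr\"uss. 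Since the essential spectrum of $\Lf$ touches the imaginary axis (it is a compact perturbation of $\Lf_0$), a uniform-in-$\tau$ energy bound is not for free and is never proved. With only $e^{\varepsilon\tau}$ growth the $\sup_\tau$ norm does not close, and weighting it by $e^{-\varepsilon\tau}$ would in turn spoil the $L^1_\tau$ control of the nonlinearity. So as written, the fixed-point space cannot be shown to be preserved by $\mathcal K$.

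The paper sidesteps this entirely by dropping the energy component and working in the pure Strichartz space
\[
\|\phi\|_{\mathcal X}=\|\phi\|_{L^2_\tau L^{\frac{2d}{d-3}}(\B^d_1)}+\|\phi\|_{L^{\frac{d+2}{d-2}}_\tau L^{\frac{2d+4}{d-2}}(\B^d_1)},
\]
using the second admissible pair $\bigl(\tfrac{d+2}{d-2},\tfrac{2d+4}{d-2}\bigr)$ precisely to absorb the top-order piece $|u|^{(d+2)/(d-2)}$ of the nonlinearity (note $\int_0^\infty\|\phi\|_{L^{(2d+4)/(d-2)}}^{(d+2)/(d-2)}\,d\sigma$ is exactly the second norm raised to the power $(d+2)/(d-2)$). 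This is what your single Strichartz norm plus Sobolev embedding would have to reproduce via interpolation—and it can, for $d\le 6$—but only if you have \emph{pointwise-in-$\tau$} control of $\|\phi(\tau)\|_{L^{2d/(d-2)}}$, which brings you back to the missing uniform energy bound. Replacing your $\sup_\tau$ energy norm by the second Strichartz norm fixes the argument and recovers the paper's proof.
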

Once more we would like to make some remarks.
\begin{itemize}
\item
One readily computes that 
$$\|u^T(t,.)\|_{L^{\frac{2d}{d-3}}(\B^d_{T-t})}^2\simeq (T-t)^{-1}.
$$
Hence, for the estimate \eqref{estimatethm} to hold, $u$ has to exhibit the same blowup behavior as $u^T$ modulo a small error.  Consequently, Theorem \ref{thm: stability} states that there is an open ball around $u^1[0]$ in the energy topology such that data inside that ball leads to the ODE type blowup. Observe, however, that the actual blowup time gets slightly shifted in general. This shift is a consequence of the time translation symmetry of Eq. \eqref{Eq:startingeq2}.
\item
The topology used is optimal in that one cannot lower the regularity assumptions on the initial data as Eq.~\eqref{Eq:startingeq2} is ill-posed below $H^1\times L^2$.
\item
The restriction $d\leq 6$ is due to the fact, that starting from dimension $7$ one cannot solely rely on the established $L^p L^q$ spacetime estimates to control the nonlinearity. 
\end{itemize}
\subsection{Outline of the proof of Theorem \ref{thm:strichartz}}
As the main body of this work is concerned with proving Theorem \ref{thm:strichartz}, we highlight the key steps. We also emphasize that the strategy employed to tackle this problem builds on the techniques devised in \cite{Don17,DonRao20,DonWal22a}.
\begin{itemize}
\item 
As previously mentioned, our starting point is formulating the problem in the similarity coordinates $\tau=-\log(T-t)$ and $\rho=\frac{r}{T-t}$.
In these coordinates the blowup solution $u_*^T$ is just a constant function. 
Moreover, Eq.~\eqref{Eq:startingeq2} takes the
autonomous first-order form
\[ \Phi(\tau)=\mathbf L\Phi(\tau)+\mathbf N(\Phi(\tau)) \]
for perturbations $\Phi$ of the blowup, where $\mathbf L$ is a spatial
differential operator obtained from linearising the equation around the blowup solution, and $\mathbf N(\Phi(\tau))$ is the resulting nonlinearity. We show that $\mathbf L$ generates a
semigroup $\mathbf S$ on $\mathcal{H}:= H^1\times L^2(\B^d_1)$ and prove that $\Lf$ has precisely one
unstable eigenvalue $\lambda = 1$. This
eigenvalue does not correspond to a ``real'' instability of the
blowup solution $u^T$ but is just a consequence of the time
translation symmetry of the equation.
Furthermore, we establish that the associated spectral projection $\Pf$ is of rank $1$ and by standard semigroup theory we obtain the bounds
\begin{align*}
\|\Sf(\tau) (\I-\Pf)\|_{\mathcal{H}}\leq C_\varepsilon e^{\varepsilon\tau}
\end{align*}
for any $\varepsilon>0$
and 
\begin{align*}
\|\Sf(\tau) \Pf\|_{\mathcal{H}}\leq C e^{\tau}.
\end{align*}
 \item
Having done this groundwork, we begin with establishing Strichartz estimates for
$\mathbf S(\tau)(\I-\Pf)$.
For this, we first asymptotically construct the resolvent of $\Lf$.
One of the key ingredients for this is the fact that
the spectral equation $(\lambda-\Lf)\uf=\ff$ 
with $\uf=(u_1,u_2)$ and $\ff=(f_1,f_2)$ reduces to the second order ODE
\begin{equation}\label{eq:outline}
  \begin{split}
   (1-\rho^2)&u_1''(\rho)+\left(\frac{d-1}{\rho}-(2\lambda+d)\rho\right)u_1'(\rho)-\left(\lambda\left(\lambda+d-1\right)\right)u_1(\rho)
    \\
    &-du_1(\rho)=-F_\lambda(\rho)
    \end{split}
\end{equation}
with $F_\lambda(\rho)=f_2(\rho)+(\lambda+\frac d2)f_1(\rho)+\rho f_1'(\rho)$ and $\rho \in (0,1)$.
To simplify the analysis of the equation, it useful to get rid of the first
order derivative by appropriately transforming the
independent variable $u$. For the resulting equation,
we then construct fundamental systems near both poles separately.
To do so, we employ the diffeomorphism
\[
\varphi(\rho):=\frac{1}{2}\log\left(\frac{1+\rho}{1-\rho}\right)
\]
which, by means of a Liouville-Green transform, transforms the equation
into a Bessel equation.
Near $0$, we then construct two linearly independent solutions to
Eq.~\eqref{eq:outline}  which are in essence perturbed Bessel
functions and we control the error by Volterra iterations. 
However, near the endpoint $1$, we cannot use Hankel functions, as the resulting control over the perturbative would not be good enough for our purposes if we were to proceed in that fashion.
Hence, near 1, we have no choice but to construct a fundamental system directly. This is also done by means of Volterra iterations.
After gluing together the different solutions, we obtain a satisfactory representation of the
resolvent $(\lambda-\Lf)^{-1}$.

\item
To continue, we make use of the Laplace
representation
\[ \Sf(\tau)(\I-\Pf)\ff=\frac{1}{2\pi
    i}\lim_{N\to\infty}\int_{\epsilon-iN}^{\epsilon+iN}e^{\lambda\tau}(\lambda-\Lf)^{-1}(\I-\Pf)\ff\,d\lambda
\]
which, together with our asymptotic construction of the
resolvent of $\Lf$, allows us to push the contour of integration onto the imaginary axis.
We then proceed to 
prove Strichartz estimates by bounding the resulting oscillatory integrals.
More precisely, we prove
bounds for the difference of the linearised evolution to the free
evolution. This is much easier on a technical level and equivalent, given that Strichartz estimates for the free evolution follow easily from the standard Strichartz estimates in Cartesian coordinates. 
\end{itemize}

\subsection{Related results}
Critical dispersive equations have been the focus of extensive research over the last years and so, the singularity formation of such equations is also being intensively studied. As a result, there have been numerous intriguing works of which we can sadly only mention a handful. We begin with results on type II blowup (i.e., blowup solutions with finite energy norm). The construction of such solutions can be found in \cite{KriSchTat09}, \cite{KriSch14}, \cite{HilRap12} and \cite{Jen17}. Further, for the classification of type II blowup we refer to \cite{DuyKenMer11,DuyKenMer12a,DuyKenMer12b,DuyKenMer16}. Moreover, fascinating results concerning the stability of type II blowup, have been established in \cite{BurKri17} and \cite{Kri18}. Moving on to type I blowup, in \cite{Don17} and \cite{DonRao20}, the stability of the ODE blowup profile in the lightcone has already been established for the energy critical wave equation in three and five dimensions. Recently, the author together with Donninger also managed to show the blowup stability for $H^2\times H^1$ critical wave maps \cite{DonWal22a}. Stability for subcritical equations was shown by the impressive methods devised by Merle and Zaag in their joint works \cite{MerZaa03,MerZaa05,MerZaa07,MerZaa08,MerZaa12a,MerZaa12b,MerZaa15} and further investigated by Alexakis and Shao \cite{AleSha17} and Azaiez \cite{Aza15}. For an excellent numerical study of the blowup profile, we refer to the study  by  Bizo\'{n}, Chmaj, and Tabor \cite{BizChmTab04}. Asymptotic stability in the supercritical case was shown by Donninger and Schörkhuber, see \cite{DonSch16,DonSch17}. Recently, Glogić and Schörkhuber  also studied the cubic and, together with Csobo, the quadratic wave equation \cite{GloScho21,CsoGloSch21}. They show the stability of a blowup solution which is not independent of the spatial variables. We would also like to mention related developments in a more general coordinate system, called hyperboloidal similarity coordinates. These coordinates allow one to study blowup stability problems on much larger domains than just the backwards lightcone \cite{BieDonSch21,DonOst21}.  Furthermore, as our work pertains to the body of work on Strichartz estimates for wave equations, we also want to mention some of the more recent results on these, e.g., \cite{DAnFan08,MetTar07,DonGlo19}.

\section{Similarity coordinates and Semigroup Theory}
As previously stated, we let $d\geq 3$ be a fixed natural number. Then, the coordinates in which we study the evolution of the Cauchy problem \eqref{Eq:startingeq2} are the similarity coordinates given by 
\begin{align*}
\rho=\frac{r}{T-t},\qquad\tau=-\log(T-t)+\log(T).
\end{align*}
Upon setting $\psi(\tau,\rho)=(Te^{-\tau})^{\frac{d-2}{2}}u(T-Te^{-\tau},Te^{-\tau}\rho)$, Eq. \eqref{Eq:startingeq2} transforms into the equation
\begin{equation}\label{Eq: sim coordinates}
\begin{split}
\bigg[\partial_\tau^2 &+(d-1)\partial_\tau+2\rho\partial_\tau\partial_\rho -(1-\rho^2)\partial_\rho^2 -\frac{d-1}{\rho}\partial_\rho+d\rho\partial_\rho+\frac{d(d-2)}{4}\bigg]\psi(\tau,\rho)
\\
&=\psi(\tau,\rho)|\psi(\tau,\rho)|^{\frac{4}{d-2}}.
\end{split}
\end{equation}
Let now
\begin{align*}
\psi_1(\tau,\rho)&=\psi(\tau,\rho)
\\
\psi_2(\tau,\rho)&=\partial_\tau\psi(\tau,\rho)+\rho\partial_\rho\psi(\tau,\rho)+\frac{d-2}{2}\psi(\tau,\rho).
\end{align*}
Then Eq. \eqref{Eq: sim coordinates} turns into the system
\begin{equation} \label{Eq:intermediate system2}
\begin{split}
\partial_\tau \psi_1(\tau,\rho)&=-\rho\partial_\rho\psi_1(\tau,\rho)-\frac{d-2}{2}\psi_1(\tau,\rho)+\psi_2(\tau,\rho)
\\
\partial_\tau \psi_2(\tau,\rho)&=\partial_\rho^2\psi_1(\tau,\rho)+\frac{d-1}{\rho}\partial_\rho \psi_1(\tau,\rho)-\rho\partial_\rho \psi_2(\tau,\rho)-\frac{d}{2}\psi_2(\tau,\rho)
\\
&\quad+\psi_1(\tau,\rho)|\psi_1(\tau,\rho)|^{\frac{4}{d-2}},
\end{split}
\end{equation}
whereas the the blowup solution $u^T$ transforms into the constant function 
\[\begin{pmatrix}
c_d\\
\frac{d-2}{2}c_d
\end{pmatrix}.
\]
Next, we define $\H:=\{\uf \in H^1 \times L^2(\B^d_1):\uf \text{ radial}\} $ and denote by  $\|.\|_{\mathcal{H}}$ the standard radial $H^1\times L^2(\B^d_1)$ norm.
Motivated by the above system, we define the operator \\$\widetilde{\Lf}:D(\widetilde{\Lf})\subset  \H \to \H$, corresponding to the linear part in Eq.~\eqref{Eq:intermediate system2}, as
\begin{align*}
\widetilde{\Lf} \uf(\rho)=\begin{pmatrix}
-\rho u_1'(\rho)-\frac{d-2}{2}u_1(\rho)+u_2(\rho)\\
u_1''(\rho)+\frac{d-1}{\rho}u_1'(\rho)-\rho u_2'(\rho)-\frac{d}{2}u_2(\rho)
\end{pmatrix},
\end{align*}
where $D(\widetilde{\Lf}):=\{\uf\in C^2\times C^1(\overline{\B}_1^d):\uf \text{ radial} \}$.

Further, we define an inner product on $D(\widetilde{\Lf})$, as
\begin{align*}
\left(\uf|\vf\right)_{E}:=\int_0^1 u_1'(\rho)\bar{v}_2'(\rho)\rho^{d-1} d \rho+ \int_0^1 u_2(\rho)\bar{v}_2(\rho)\rho^{d-1} d \rho+ u_1(1)\bar{v}_1(1)
\end{align*}
for any $\uf,\vf \in D(\widetilde{\Lf})$ and we denote the associated norm by $\|.\|_{E}$.
\begin{lem}
The norms $\|.\|_{E}$ and $\|.\|_{\mathcal{H}}$ are equivalent on $D(\widetilde{\Lf})$, hence, also on $\H$.
\end{lem}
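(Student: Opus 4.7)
The plan is to expand both norms explicitly in their radial forms, observe that two of the three terms already coincide up to a harmless constant, and then reduce to a pair of one-dimensional inequalities on the weighted interval $(0,1)$ with weight $\rho^{d-1}$. Concretely, up to the factor $|\mathbb{S}^{d-1}|$, the radial $\mathcal H$ norm is
\[
\|\uf\|_{\mathcal H}^2 \simeq \int_0^1 |u_1'(\rho)|^2\rho^{d-1}d\rho+\int_0^1 |u_1(\rho)|^2\rho^{d-1}d\rho+\int_0^1 |u_2(\rho)|^2\rho^{d-1}d\rho,
\]
whereas the $E$ norm differs only by replacing the middle term with the boundary value $|u_1(1)|^2$. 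Hence it suffices to prove that, for radial $u_1\in C^2(\overline{\mathbb B^d_1})$, the quantities $|u_1(1)|^2$ and $\int_0^1|u_1(\rho)|^2\rho^{d-1}d\rho$ are each controlled by $|u_1(1)|^2+\int_0^1|u_1'(\rho)|^2\rho^{d-1}d\rho$ and by $\int_0^1|u_1(\rho)|^2\rho^{d-1}d\rho+\int_0^1|u_1'(\rho)|^2\rho^{d-1}d\rho$, respectively.

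For the bound $\|\uf\|_E\lesssim\|\uf\|_{\mathcal H}$, I would establish the one-dimensional trace inequality $|u_1(1)|^2\lesssim\|u_1\|_{H^1(\mathbb B^d_1)}^2$. The cleanest route is to integrate the identity $\frac{d}{d\rho}\bigl(\rho^d u_1(\rho)^2\bigr)=d\rho^{d-1}u_1(\rho)^2+2\rho^d u_1(\rho)u_1'(\rho)$ from $0$ to $1$, which yields
\[
|u_1(1)|^2=d\int_0^1|u_1(\rho)|^2\rho^{d-1}d\rho+2\int_0^1 u_1(\rho)\overline{u_1'(\rho)}\rho^d d\rho,
\]
and then apply Cauchy–Schwarz together with $\rho^d\le\rho^{d-1}$ to the last integral.

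For the converse bound $\|\uf\|_{\mathcal H}\lesssim\|\uf\|_E$, I would use the fundamental theorem of calculus in the form $u_1(\rho)=u_1(1)-\int_\rho^1 u_1'(s)ds$. Writing $u_1'(s)=s^{(d-1)/2}u_1'(s)\cdot s^{-(d-1)/2}$ and applying Cauchy–Schwarz gives
\[
|u_1(\rho)|^2\lesssim|u_1(1)|^2+\bigl(\rho^{2-d}-1\bigr)\int_0^1|u_1'(s)|^2 s^{d-1}ds,
\]
where the dimensional constraint $d\geq 3$ ensures the $s$-integral is well-behaved. Multiplying by $\rho^{d-1}$ and integrating over $(0,1)$ produces the desired bound because $\int_0^1 \rho^{d-1}(\rho^{2-d}-1)d\rho=\frac{1}{d}-\frac{1}{2}$ is finite precisely for $d\geq 3$. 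The only subtlety, and the one step worth being careful about, is this weight integrability near $\rho=0$, which is automatic in the dimensions considered here. Combining the two inequalities, together with the trivial identities for the $u_1'$ and $u_2$ contributions, yields the equivalence on $D(\widetilde{\Lf})$; the statement for all of $\mathcal H$ follows by density of $D(\widetilde{\Lf})$ in $\mathcal H$.
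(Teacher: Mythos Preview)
Your argument is correct and complete. The paper itself does not spell out a proof but simply refers to Lemma~2.2 in \cite{DonRao20}; the elementary approach you give---trace identity plus fundamental theorem of calculus with a weighted Cauchy--Schwarz---is exactly the standard way such equivalences are established and is presumably what that reference contains. One harmless slip: the integral $\int_0^1 \rho^{d-1}(\rho^{2-d}-1)\,d\rho$ equals $\tfrac{1}{2}-\tfrac{1}{d}$, not $\tfrac{1}{d}-\tfrac{1}{2}$; this does not affect the argument since all you need is finiteness (and indeed positivity) for $d\geq 3$.
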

\begin{proof}
This follows in the same fashion as Lemma 2.2 in \cite{DonRao20}.
\end{proof}
This norm has the nice feature that we can fairly quickly establish a dispersive estimate on $\widetilde{\Lf}$.
\begin{lem}
We have that
\begin{align*}
\Re\left(\widetilde{\Lf} \uf|\uf\right)_{E}\leq 0
\end{align*}
for all $\uf\in C^2\times C^1(\overline{\B^d_1})$.
\end{lem}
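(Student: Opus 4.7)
The plan is to compute $\bigl(\widetilde{\Lf}\uf\,\big|\,\uf\bigr)_E$ explicitly via integration by parts, keeping careful track of the boundary contributions at $\rho=1$, and then check that after taking the real part everything combines into a manifestly non-positive quadratic form in the boundary values of $\uf$.

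Concretely, I would write
\[
\bigl(\widetilde{\Lf}\uf\,\big|\,\uf\bigr)_E = \int_0^1 \bigl(-\rho u_1'-\tfrac{d-2}{2}u_1+u_2\bigr)'\,\overline{u_1'}\,\rho^{d-1}\,d\rho + \int_0^1 \bigl(u_1''+\tfrac{d-1}{\rho}u_1'-\rho u_2'-\tfrac{d}{2}u_2\bigr)\,\overline{u_2}\,\rho^{d-1}\,d\rho + \bigl(\widetilde{\Lf}\uf\bigr)_1(1)\,\overline{u_1(1)},
\]
and expand. The first integrand simplifies to $(-\rho u_1''-\tfrac{d}{2}u_1'+u_2')\,\overline{u_1'}\,\rho^{d-1}$. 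Integrating $-\rho u_1''\,\overline{u_1'}\,\rho^{d-1}$ by parts and taking real parts produces $-\tfrac{1}{2}|u_1'(1)|^2+\tfrac{d}{2}\|u_1'\|_{L^2(\rho^{d-1})}^2$, which exactly cancels the contribution of the $-\tfrac{d}{2}u_1'\,\overline{u_1'}$ term (the boundary value at $0$ drops because $\rho^{d}|u_1'(\rho)|^2\to 0$ for $d\geq 3$ and $u_1\in C^2$). In the second integral, the first two terms combine into $(\rho^{d-1}u_1')'\,\overline{u_2}$, which after one integration by parts yields $u_1'(1)\overline{u_2(1)} - \int_0^1 u_1'\,\overline{u_2'}\,\rho^{d-1}\,d\rho$. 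The term $-\rho u_2'\,\overline{u_2}\,\rho^{d-1}$ integrated by parts and taken real part gives $-\tfrac{1}{2}|u_2(1)|^2+\tfrac{d}{2}\|u_2\|_{L^2(\rho^{d-1})}^2$, cancelling $-\tfrac{d}{2}u_2\,\overline{u_2}\,\rho^{d-1}$. The key cancellation is then that the remaining interior cross-term $\Re\int_0^1 u_2'\,\overline{u_1'}\,\rho^{d-1}\,d\rho$ from the first integral is exactly $\Re\int_0^1 u_1'\,\overline{u_2'}\,\rho^{d-1}\,d\rho$, which matches the term arising from integration by parts in the second integral and kills it.

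What remains is purely boundary data at $\rho=1$. Writing $a=u_1'(1)$, $b=u_2(1)$, $c=u_1(1)$ and adding in the contribution $\Re\bigl[(-a-\tfrac{d-2}{2}c+b)\bar c\bigr]$ from the boundary piece of the inner product, one is left with
\[
\Re\bigl(\widetilde{\Lf}\uf\,\big|\,\uf\bigr)_E = -\tfrac{1}{2}|a|^2 + \Re(a\bar b) - \tfrac{1}{2}|b|^2 - \Re(a\bar c) + \Re(b\bar c) - \tfrac{d-2}{2}|c|^2.
\]
Completing the square in $a-b$ against $c$ rewrites this as
\[
\Re\bigl(\widetilde{\Lf}\uf\,\big|\,\uf\bigr)_E = -\tfrac{1}{2}\bigl|u_1'(1)-u_2(1)+u_1(1)\bigr|^2 - \tfrac{d-3}{2}\bigl|u_1(1)\bigr|^2,
\]
which is $\leq 0$ precisely when $d\geq 3$, giving the claim.

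The main obstacle I anticipate is purely bookkeeping: getting the sign and weight on every integration by parts right, and making sure the two interior cross-terms $\int u_1'\overline{u_2'}\rho^{d-1}$ really do cancel (they do, because $\Re z=\Re\bar z$). The role of the dimension cut $d\geq 3$ only appears at the very end through the coefficient $\tfrac{3-d}{2}$ of $|u_1(1)|^2$, which is a nice conceptual point worth highlighting.
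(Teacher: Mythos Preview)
Your proof is correct and follows essentially the same route as the paper: the same integrations by parts reduce everything to a quadratic form in the boundary values $u_1'(1)$, $u_2(1)$, $u_1(1)$, and your completion of the square $-\tfrac12|u_1'(1)-u_2(1)+u_1(1)|^2-\tfrac{d-3}{2}|u_1(1)|^2$ is precisely the explicit version of the elementary inequality the paper invokes at the end. If anything, your formulation is slightly sharper, since it gives an exact identity rather than an inequality and makes the role of $d\geq 3$ transparent.
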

\begin{proof}
An integration by parts shows that
\begin{align*}
-\Re \left(\int_0^1 u_1''(\rho) \overline{u}_1'(\rho)\rho^d d \rho
\right)=-\frac{|u_1'(1)|^2}{2}+\frac{d}{2}\int_0^1|u_1'(\rho)|^2 \rho^{d-1} d\rho
\end{align*}
and so
\begin{align*}
\Re\left(\int_0^1[\widetilde{\Lf} \uf]_1'(\rho) \bar{u}_1'(\rho) \rho^{d-1}d \rho\right)=&\Re\bigg(\frac{|u_1'(1)|^2}{2}
+\int_0^1 u_2'(\rho)\bar{u}_1'(\rho)\rho^{d-1}d \rho \bigg).
\end{align*}
Analogously, we see that
\begin{align*}
\Re\left(\int_0^1[\widetilde{\Lf} \uf]_2(\rho) \bar{u}_2(\rho) \rho^{d-1}d \rho\right)=&\Re\left(\int_0^1u_1''(\rho) \bar{u}_2(\rho)\rho^{d-1}+(d-1)u_1'(\rho) \bar{u}_2(\rho)\rho^{d-2}d \rho\right)
\\
&-\frac{1}{2}|u_2(1)|^2
\\
=&\Re\left(-\int_0^1u_1'(\rho) \bar{u}_2'(\rho)\rho^{d-1}+u_1'(1)\bar{u}_2(1)\right)
-\frac{1}{2}|u_2(1)|^2.
\end{align*}
Furthermore,
\begin{align*}
[\widetilde{\Lf} \uf]_1(1) \bar{u}_1(1)=-u_1'(1)\bar{u}_1(1)-\frac{d-2}{2}|u_1(1)|^2+u_2(1) \bar{u}_1(1)
\end{align*}
and thus, as $\frac{d-2}{2}\geq \frac{1}{2}$, using the elementary inequality
$$
\Re(-a\bar{b}+a\bar{c}+b\bar c) \leq\frac{1}{2}\left(|a|^2+|b|^2+|c|^2\right)
$$
with $a=-u_1'(1)$, $b=u_2(1)$, and $c=u_1(1)$,
implies that 
\begin{align*}
\Re(\widetilde{\Lf}\uf|\uf)_E\leq 0,
\end{align*}
for all $\uf \in D(\widetilde{\Lf})$.
\end{proof}
Next, we will show that the range $(1-\widetilde{\Lf})$ lies dense in $\mathcal{H}$.
To this end, note that
\begin{align*}
\left(\lambda-\widetilde{\Lf}\right)\uf=\ff
\end{align*}
implies 
\begin{align*}
\lambda u_1(\rho)+\rho u_1'(\rho)+\frac{d-2}{2}u_1(\rho)-u_2(\rho)=f_1(\rho)\\
\lambda u_2(\rho)- u_1''(\rho)-\frac{d-1}{\rho}u_1'(\rho)+\rho u'_2(\rho)+\frac{d}{2}u_2(\rho)=f_2(\rho).
\end{align*}
The first of these equations now implies
$$
u_2=\lambda u_1+\rho u_1'(\rho)+\frac{d-2}{2}u_1(\rho)-f_1(\rho)
$$
which we can plug into the latter to obtain a linear second order differential equation given by
\begin{align}\label{free generalised spectral eq}
(1-\rho^2)u_1''(\rho)+\left(\frac{d-1}{\rho}-(2\lambda+d)\rho\right)u_1'(\rho)-\left(\lambda\left(\lambda+d-1\right)+d\frac{d-2}{4}\right)u_1(\rho)=-F_\lambda(\rho)
\end{align}
with 
$F_\lambda(\rho)=f_2(\rho)+(\frac{d}{2}+\lambda) f_1(\rho)+ \rho f_1'(\rho)$.
\begin{lem}
The range of the operator $1-\widetilde{\Lf}$ lies dense in $\mathcal{H}$.
\end{lem}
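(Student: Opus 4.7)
\bigskip
\noindent\textbf{Proof proposal.} The plan is a Lumer-Phillips style density argument: I will exhibit a dense subset $\mathcal{D}\subset\H$ such that for every $\ff\in\mathcal{D}$ the equation $(1-\widetilde{\Lf})\uf=\ff$ has a solution $\uf\in D(\widetilde{\Lf})$. A convenient choice is
\begin{equation*}
\mathcal{D}:=\bigl\{\ff\in C^\infty\times C^\infty(\overline{\B^d_1})\cap\H:\text{radial, and }F_1\text{ vanishes in a neighbourhood of }\rho=1\bigr\},
\end{equation*}
which is dense in $\H$. As shown just before the lemma, solving $(1-\widetilde{\Lf})\uf=\ff$ is equivalent to producing a radial $u_1\in C^2([0,1])$ solving the second-order ODE \eqref{free generalised spectral eq} at $\lambda=1$, after which $u_2$ is recovered from the first component of the spectral equation and the required regularity of $u_2\in C^1$ is automatic.

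Next I would carry out a Frobenius analysis of \eqref{free generalised spectral eq}, which has regular singular points at $\rho=0$ and $\rho=1$. At $\rho=0$ the indicial roots are $0$ and $2-d$, so a one-dimensional space of homogeneous solutions is analytic at the origin; this is precisely the class of solutions compatible with the radial smoothness condition $u_1'(0)=0$. At $\rho=1$, after rewriting the equation in the variable $s=1-\rho$, the leading balance is $2s\ddot u+3\dot u+O(1)u=0$, whose indicial exponents are $0$ and $-1/2$. Hence one also obtains a one-dimensional space of solutions regular up to $\rho=1$. Call these homogeneous solutions $\phi_0$ and $\phi_1$ respectively; both can be constructed explicitly by the usual Frobenius/Volterra iteration, with $\phi_0,\phi_1\in C^\infty((0,1))$ and regular behavior at the respective endpoint.

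Provided $\phi_0$ and $\phi_1$ are linearly independent (equivalently, $\lambda=1$ is not an eigenvalue of the homogeneous problem with endpoint regularity at both sides), variation of parameters against the fundamental system $(\phi_0,\phi_1)$ yields a particular solution $u_1$ of \eqref{free generalised spectral eq} with $\lambda=1$. Because $F_1$ is smooth and vanishes near $\rho=1$, the variation-of-parameters integrals converge and produce a solution that extends to $C^2([0,1])$: regularity near $\rho=1$ comes from matching the $\phi_1$ component there (the integral against $F_1$ vanishes in a neighbourhood of $1$), while regularity near $\rho=0$ comes from the coefficient of $\phi_0$ being adjusted so that the $\phi_1$-piece does not excite the singular indicial root $2-d$.

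The main technical obstacle is verifying the non-vanishing of the Wronskian $W(\phi_0,\phi_1)$, i.e.\ that $\lambda=1$ is not an eigenvalue of $\widetilde{\Lf}$. Should this fail, the Fredholm alternative only imposes a finite-codimensional solvability condition on $\ff$, and density still follows by intersecting $\mathcal{D}$ with this codimension-one subspace; but I expect the cleanest route to be the direct verification that no nontrivial homogeneous solution of \eqref{free generalised spectral eq} at $\lambda=1$ is simultaneously regular at $0$ and at $1$, using the explicit Frobenius series combined with the dissipativity bound $\Re(\widetilde{\Lf}\uf\,|\,\uf)_E\leq 0$ proved in the previous lemma, which rules out eigenvalues with $\Re\lambda>0$.
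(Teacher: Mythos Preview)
Your approach is correct and follows the same overall strategy as the paper: reduce $(1-\widetilde{\Lf})\uf=\ff$ to the scalar ODE \eqref{free generalised spectral eq} at $\lambda=1$ and solve it by variation of parameters against a fundamental system adapted to the two regular singular endpoints. The execution, however, is different. The paper writes down the fundamental system in closed form,
\[
u_0(\rho)=\frac{1}{(1+\sqrt{1-\rho^2})^{\frac{d}{2}-1}\sqrt{1-\rho^2}},\qquad
u_1(\rho)=\frac{(1-\sqrt{1-\rho^2})^{\frac{d}{2}-1}-(1+\sqrt{1-\rho^2})^{\frac{d}{2}-1}}{\rho^{d-2}\sqrt{1-\rho^2}},
\]
reads off the (explicitly nonzero) Wronskian, and then verifies by direct limit computations that the Duhamel solution lies in $C^2([0,1])$ for \emph{every} $F_1\in C^\infty(\overline{\B^d_1})$; the endpoint $\rho=1$ is delicate and requires matching the $(1-\rho)^{-5/2}$ singularity of $u_0''$ against the $(1-\rho)^{3/2}$ vanishing of the accompanying integral. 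Your route replaces the explicit formulas by abstract Frobenius theory and obtains $W(\phi_0,\phi_1)\neq 0$ from the dissipativity estimate $\Re(\widetilde{\Lf}\uf\,|\,\uf)_E\le 0$, which indeed forbids eigenvalues with $\Re\lambda>0$; this is a clean alternative to the explicit computation. Your restriction to $F_1$ vanishing near $\rho=1$ sidesteps precisely the endpoint cancellation the paper has to carry out by hand, at the modest cost of checking density of that subclass. The one place where your sketch is thin is the $C^2$ regularity at $\rho=0$: the phrase about ``adjusting the coefficient of $\phi_0$'' is imprecise---with the standard Green's-function choice of integration limits no adjustment is needed, but one still has to verify (as the paper does via l'H\^opital) that $\phi_1(\rho)\int_0^\rho\frac{\phi_0(s)F_1(s)}{(1-s^2)W(s)}\,ds$ and its first two $\rho$-derivatives extend continuously to $\rho=0$ and that $u'(0)=0$.
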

\begin{proof}
For $\lambda=1$ the homogeneous version of Eq. \eqref{free generalised spectral eq} reads as
\begin{align}\label{lambda=1}
(1-\rho^2)u_1''(\rho)+\left(\frac{d-1}{\rho}-(d+2)\rho\right)u_1'(\rho)-
\frac{d}{4}(d+2) u_1(\rho)=0
\end{align}
and a fundamental system of solutions is given by
\begin{align*}
u_0(\rho)&=\frac{1}{(1+\sqrt{1-\rho^2})^{\frac{d}{2}-1}\sqrt{1-\rho^2}}\\
u_1(\rho)&=\frac{(1-\sqrt{1-\rho^2})^{\frac{d}{2}-1}-(1+\sqrt{1-\rho^2})^{\frac{d}{2}-1}}{\rho^{d-2}\sqrt{1-\rho^2}}.
\end{align*}
Moreover, their Wronskian is given by
\begin{align*}
W(u_0,u_1)(\rho)=\frac{d-2}{\rho^{d-1}(1-\rho^2)^\frac{3}{2}}.
\end{align*}
For $F_1$ in $C^\infty(\overline{\B^d_1}),$ we now use Duhamel's formula to see that 
an explicit solution of the equation
\begin{align*}
(1-\rho^2)u_1''(\rho)+\left(\frac{d-1}{\rho}-(d+2)\rho\right)u_1'(\rho)-
\frac{d}{4}(d+2) u_1(\rho)=-F_1(\rho)
\end{align*}
is given by
\begin{align*}
u(\rho)=-u_{0}(\rho)\int_\rho^1 \frac{u_{1}(s)F_1(s)}{(1-s^2)W(u_{0},u_{1})(s)} ds-u_{1}(\rho)\int_0^\rho \frac{u_{1}(s)F_1(s)}{(1-s^2)W(u_{0},u_{1})(s)} ds.
\end{align*}
From standard ODE theory, it follows that $u\in C^\infty((0,1))$ and therefore only the behavior at the end points needs to be checked. For this, we remark that the pole of $u_1$ at $\rho=0$ is of order $d-2$, and so,
it follows that $u$ is continuous on $[0,1)$ and, after a quick inspection, one concludes that $u$ is even continuous on $[0,1]$.
Next,
\begin{align}\label{Eq:first derivative}
u'(\rho)=-&u_{0}'(\rho)\int_\rho^1 \frac{s^{d-1}\sqrt{1-s^2} u_{1}(s)F_1(s)}{d-2} ds-u_{1}'(\rho)\int_0^\rho \frac{s^{d-1} \sqrt{1-s^2}u_{0}(s)F_1(s)}{d-2} ds
\end{align}
and using de l'Hopitals rule one verifies that $u'$ is continuous up to $\rho=1$. Likewise, it is straightforward to check that
\begin{align*}
\lim_{\rho\to 0}u_{1}'(\rho)\int_0^\rho \frac{s^{d-1} \sqrt{1-s^2}u_{0}(s)F_1(s)}{d-2} ds=0.
\end{align*}
Moreover, we observe that 
$u_0$ is a smooth even function on $(-1,1)$ and so, $ u_0'(0)=0$.
Finally, the second derivative of $u$ is given by
\begin{align*}
u''(\rho)&=-u_{0}''(\rho)\int_\rho^1 \frac{s^{d-1}\sqrt{1-s^2} u_{1}(s)F_1(s)}{d-2} ds-u_{1}''(\rho)\int_0^\rho \frac{s^{d-1} \sqrt{1-s^2}u_{0}(s)F_1(s)}{d-2} ds
\\
&\quad-\frac{F_1(\rho)}{1-\rho^2}
\end{align*}
A direct calculation shows that $u_1''(\rho)$ is continuous up to $\rho=1$, while it has a pole of order $d$ at $0$. Once more using de l'Hopitals rule therefore shows that 
\[u_{1}''(\rho)\int_0^\rho \frac{s^{d-1} \sqrt{1-s^2}u_{0}(s)F_1(s)}{d-2} ds \in C([0,1]).
\]
Consequently, we see that $u\in C^2([0,1))$.
Lastly, 
we note we can rewrite $u_0$ as
\begin{align*}
u_0(\rho)=\frac{\left(\sqrt{1+\rho}-\sqrt{1-\rho}\right)^{d-2}}{2^{\frac{d}{2}-1}\rho^{d-2}\sqrt{1-\rho^2}}=\frac{(1+\rho)^{\frac{d-2}{2}}}{2^{\frac{d}{2}-1}\rho^{d-2}\sqrt{1-\rho^2}} + f(\rho)+\sqrt{1-\rho}g(\rho)
\end{align*}
for some functions $f,g$ which are smooth on $(0,1]$. 
Hence, 
$u_0''$ is of the form
\begin{align*}
u_0''(\rho)=\frac{3(1+\rho)^{\frac{d-2}{2}}}{ 2^{\frac{d}{2}-1}\rho^{\frac{d}{2}-1}(1-\rho^2)^{\frac{5}{2}}}+(1-\rho)^{-\frac{3}{2}} h(\rho)
\end{align*}
with $h \in C(0,1])$.
Further, 
\[
\lim_{\rho\to 1}u_{1}(\rho)=2-d
\]
and so,
\[
\int_\rho^1 \frac{s^{d-1}\sqrt{1-s^2} u_{1}(s)F_1(s)}{d-2}\sim -F_1(1)\int_\rho^1 \sqrt{1-s^2} ds \sim-\frac{1}{3}(1-\rho^2)^{\frac{3}{2}}F_1(1)
\]
as $\rho \to 1$.
Consequently, the two singularities at $\rho=1$ cancel out exactly and we obtain that $u\in C^2([0,1])$, with $u'(0)=0$.
\end{proof}
Combining the last two Lemmas now yields 
\begin{lem}
The operator $\widetilde{\Lf}$ is closable and its closure, denoted by $\Lf_0$, generates a strongly continuous semigroup $\Sf_0$ on $ \mathcal{H}$, that satisfies
\begin{align*}
\|\Sf_0(\tau)\ff\|_{\mathcal{H}}\lesssim \|\ff\|_{\mathcal{H}},
\end{align*}
for all $\tau\geq 0$ and all $\ff\in \mathcal{H}$.
\end{lem}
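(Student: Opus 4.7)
The plan is to invoke the Lumer--Phillips theorem on the Hilbert space $\mathcal{H}$ endowed with the equivalent inner product $(\cdot|\cdot)_E$. All three hypotheses of a standard formulation (see, e.g., Engel--Nagel) for a densely defined operator are already at hand once two small observations are recorded.

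First, I would check that $D(\widetilde{\Lf})$ is dense in $\mathcal{H}$. This is standard: radial smooth functions on $\overline{\B^d_1}$ are dense in the radial $H^1\times L^2(\B^d_1)$, so $D(\widetilde{\Lf})$ is a dense subspace of $\mathcal{H}$. Second, the equivalence of $\|\cdot\|_E$ and $\|\cdot\|_{\mathcal{H}}$ established earlier means that completeness, density, and continuous embeddings are preserved when passing between the two norms, so the semigroup bounds obtained in the $\|\cdot\|_E$-topology translate to the desired $\|\cdot\|_{\mathcal{H}}$-bound up to a multiplicative constant.

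With these in hand, the second lemma gives the dissipativity
\[
\Re(\widetilde{\Lf}\uf|\uf)_E\leq 0\quad \text{for all } \uf\in D(\widetilde{\Lf}),
\]
and the third lemma gives that $\rg(1-\widetilde{\Lf})$ is dense in $\mathcal{H}$. By the Lumer--Phillips theorem applied to the densely defined dissipative operator $\widetilde{\Lf}$ on the Hilbert space $(\mathcal{H},(\cdot|\cdot)_E)$, the operator $\widetilde{\Lf}$ is closable, its closure $\Lf_0$ is $m$-dissipative, and $\Lf_0$ generates a strongly continuous semigroup $\Sf_0$ of contractions on $(\mathcal{H},\|\cdot\|_E)$:
\[
\|\Sf_0(\tau)\ff\|_E \leq \|\ff\|_E \quad \text{for all } \tau\geq 0,\ \ff\in \mathcal{H}.
\]

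Finally, invoking the norm equivalence $\|\cdot\|_E \simeq \|\cdot\|_{\mathcal{H}}$ yields
\[
\|\Sf_0(\tau)\ff\|_{\mathcal{H}} \lesssim \|\Sf_0(\tau)\ff\|_E \leq \|\ff\|_E \lesssim \|\ff\|_{\mathcal{H}},
\]
which is precisely the claimed bound. There is no real obstacle here; the content of the statement is already contained in the two preceding lemmas, and this proof merely packages them through the Lumer--Phillips generation theorem. The only minor subtlety worth a sentence in the write-up is to note that applying Lumer--Phillips in the $(\cdot|\cdot)_E$ inner product (where we actually have dissipativity) rather than in the ambient $H^1\times L^2$-inner product is crucial, and is made legitimate exactly by the norm equivalence.
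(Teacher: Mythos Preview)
Your proposal is correct and follows essentially the same approach as the paper: the paper's proof is a one-line invocation of the Lumer--Phillips theorem based on the two preceding lemmas (dissipativity with respect to $(\cdot|\cdot)_E$ and density of the range of $1-\widetilde{\Lf}$). You have simply spelled out the standard details---density of the domain and the passage between the equivalent norms---that the paper leaves implicit.
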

\begin{proof}
This follows immediately from the last two lemmas and the Lumer Phillips theorem.
\end{proof}
\subsection{Strichartz estimates for the free equation}
As the existence of the ``free'' semigroup $\Sf_0$ has been established, the next step is to prove Strichartz estimates for this semigroup. To do so, we will from now on always assume that $T$ is confined to $\left[\frac{1}{2},\frac{3}{2}\right]$. This restriction of $T$ leads to no loss of generality as we are only interested in $T$ close to $1$ anyway. First, we will need the following technical Lemma.
\begin{lem}\label{lem:extension}
There exists a family of extension operators $\Ef_T:H^1\times L^2(\B^d_T) \to H^1\times L^2(\R^d)$ such that
\begin{align*}
\|\Ef_T\ff\|_{H^1\times L^2(\R^d)}\lesssim \|\ff\|_{H^1\times L^2(\B^d_T)}
\end{align*}
for all $T\in \left[\frac{1}{2},\frac{3}{2}\right]$ and all $\ff \in H^1\times L^2(\B^d_T)$.

\end{lem}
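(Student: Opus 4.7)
The plan is to reduce to the case $T=1$ by a rescaling argument, exploiting that $[\tfrac12,\tfrac32]$ is compact and bounded away from $0$.

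For $T=1$ I would build $\Ef_1:H^1\times L^2(\B^d_1)\to H^1\times L^2(\R^d)$ componentwise: extension by zero for the $L^2$ component (trivially bounded), and either Stein's extension theorem or an explicit reflection across $\partial\B^d_1$ followed by a smooth cutoff for the $H^1$ component. Since $\partial\B^d_1$ is smooth no technical difficulty arises.

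For general $T\in[\tfrac12,\tfrac32]$, I would use the dilation $D_T\ff(y):=\ff(Ty)$, which takes $H^1\times L^2(\B^d_T)$ boundedly onto $H^1\times L^2(\B^d_1)$: the elementary scaling identities
\[
\|f(T\cdot)\|_{L^2(\B^d_1)}^2=T^{-d}\|f\|_{L^2(\B^d_T)}^2,\qquad \|\nabla[f(T\cdot)]\|_{L^2(\B^d_1)}^2=T^{2-d}\|\nabla f\|_{L^2(\B^d_T)}^2
\]
show that $D_T$ and $D_T^{-1}$ are bounded uniformly in $T$ on the given interval. I then set
\[
(\Ef_T\ff)(x):=(\Ef_1(D_T\ff))(x/T), \qquad x\in\R^d,
\]
which is indeed an extension since $x/T\in\B^d_1$ whenever $x\in\B^d_T$, and whose $H^1\times L^2(\R^d)$-norm is controlled by chaining the three bounds (rescaling into $\B^d_1$, applying the fixed operator $\Ef_1$, rescaling back) together.

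The argument is essentially routine. The only point to watch is that all scaling factors $T^{\pm d}$ and $T^{\pm(d-2)}$ remain comparable to $1$ on the chosen $T$-interval, which is precisely why the restriction $T\in[\tfrac12,\tfrac32]$ was imposed. I do not anticipate any substantial obstacle.
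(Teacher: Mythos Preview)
Your proposal is correct and matches the paper's own proof essentially verbatim: take a fixed bounded extension $\Ef_1$ on the unit ball, conjugate by the dilation $\ff\mapsto\ff(T\cdot)$, and use $T\in[\tfrac12,\tfrac32]$ to get uniform constants. The only difference is that you spell out the construction of $\Ef_1$ and the scaling identities explicitly, whereas the paper simply cites the existence of $\Ef_1$ and states the argument in one sentence.
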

\begin{proof}
Let $\Ef_1$ be a bounded extension operator  from $H^1\times L^2(\B^d_1)$ to $ H^1\times L^2(\R^d)$.  For $\ff \in H^1\times L^2(\B^d_T)$ we define an extension by first mapping $\ff$ to $H^1 \times L^2(\B^d_1)$ via the scaling $\ff\mapsto \ff(T.)$, then extending via $\Ef_1$, and finally undoing the scaling. Since $T\in \left[\frac{1}{2},\frac{3}{2}\right]$, the resulting family of extension operators satisfies the desired estimate.
\end{proof}
\begin{lem}\label{lem: free strichartz}
Let $p\in[2,\infty]$ and $q \in [\frac{2d}{d-2},\frac{2d}{d-3}]$ be such that $\frac{1}{p}+\frac{d}{q}=\frac{d}{2}-1$. Then the estimate
\begin{align*}
\|[\Sf_0(\tau)\ff]_1\|_{L^{p}_\tau(\R_+)L^{q}(\B^d_1)}\lesssim\|\ff\|_{\mathcal{H}},
\end{align*}
holds for all $\ff\in C^2\times C^1(\overline{\B^d_1})$.
Additionally, the inhomogeneous estimate
\begin{align*}
\left\|\int_0^\tau\left[\Sf_0(\tau-\sigma)\hfh(\sigma,.)\right]_1 d\sigma\right\|_{L^p_\tau(I)L^q(\B^d_1)}\lesssim \|\hfh\|_{L^1(I)\mathcal{H}}
\end{align*}
holds for all $\hfh \in L^1(\R_+,\mathcal{H})\cap  C([0,\infty),\mathcal{H}) $ and all intervals $I
\subset [0,\infty)$ containing $0$.
\end{lem}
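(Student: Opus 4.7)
The plan is to transfer the standard Strichartz estimates for the free wave equation in $\R^d$ to similarity coordinates on the backward light cone, using the extension operator from Lemma \ref{lem:extension} together with finite speed of propagation. Fix $T\in[\tfrac12,\tfrac32]$ and let $\ff=(f_1,f_2)\in C^2\times C^1(\overline{\B^d_1})$. Set $\tilde\ff=\Ef_1\ff\in \dot H^1\times L^2(\R^d)$, with $\|\tilde\ff\|_{\dot H^1\times L^2}\lesssim \|\ff\|_{\mathcal H}$, and define
\[
U(t,x):=\cos(t|\nabla|)\tilde f_1(x)+\frac{\sin(t|\nabla|)}{|\nabla|}\tilde f_2(x).
\]
The smoothness of $\ff$ ensures that $U$ is classical enough that the construction from Section 2 (building $\psi_1,\psi_2$ from $U$) produces a classical solution of Eq.~\eqref{Eq:intermediate system2}'s linear part, and thus, by uniqueness of the semigroup, $[\Sf_0(\tau)\ff]_1(\rho)=(Te^{-\tau})^{\frac{d-2}{2}}U\!\bigl(T-Te^{-\tau},Te^{-\tau}\rho\bigr)$ on $[0,1]$. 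By finite speed of propagation this identification does not depend on the chosen extension inside $\B_{T-t}^d$.

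Next, I would unwind the two changes of variables. Performing $y=Te^{-\tau}\rho$ in the spatial integral yields
\[
\|[\Sf_0(\tau)\ff]_1\|_{L^q(\B^d_1)}=(Te^{-\tau})^{\frac{d-2}{2}-\frac{d}{q}}\,\|U(T-Te^{-\tau},\cdot)\|_{L^q(\B^d_{T-t})},
\]
and then the substitution $t=T-Te^{-\tau}$, $d\tau=\frac{dt}{T-t}$, combined with the admissibility relation $\tfrac{1}{p}+\tfrac{d}{q}=\tfrac{d-2}{2}$, makes the Jacobian exponent vanish:
\[
\|[\Sf_0(\tau)\ff]_1\|_{L^p_\tau(\R_+)L^q(\B^d_1)}^p=\int_0^T\|U(t,\cdot)\|_{L^q(\B^d_{T-t})}^p\,dt\le \|U\|_{L^p_t(\R)L^q_x(\R^d)}^p.
\]
The standard Cartesian Strichartz estimate for the wave equation at the $\dot H^1\times L^2$ level, valid for the admissible range $p\in[2,\infty]$, $q\in[\frac{2d}{d-2},\frac{2d}{d-3}]$, then gives the desired bound $\lesssim \|\tilde\ff\|_{\dot H^1\times L^2}\lesssim \|\ff\|_{\mathcal H}$.

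For the inhomogeneous estimate, I would argue analogously. For each $\sigma$, extend $\hfh(\sigma,\cdot)$ via $\Ef_1$ to $\tilde\hfh(\sigma,\cdot)\in\dot H^1\times L^2(\R^d)$ with controlled norm, and identify via the same transform and finite speed of propagation
\[
\int_0^\tau[\Sf_0(\tau-\sigma)\hfh(\sigma,\cdot)]_1\,d\sigma
\]
with a Duhamel-type integral built from the free Cartesian propagators applied to $\tilde\hfh$, after accounting for the time change $t=T-Te^{-\tau}$, $s=T-Te^{-\sigma}$. The standard inhomogeneous wave Strichartz estimate of the form $L^1_tL^2_x\to L^p_tL^q_x$ (obtained from the homogeneous estimate by the usual $TT^*$ / Christ--Kiselev argument) gives the analogue of the homogeneous bound in Cartesian coordinates, and the same Jacobian cancellation transfers this to the similarity-coordinate inequality.

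The main technical point is the identification in step~three: one must verify that the classical object built from $U$ really coincides with $\Sf_0(\tau)\ff$ in $\mathcal H$. This is where the assumption $\ff\in C^2\times C^1(\overline{\B^d_1})$ is used, since it guarantees that the components $(\psi_1,\psi_2)$ constructed from $U$ lie in $D(\widetilde{\Lf})$ and satisfy the linear system pointwise, so by uniqueness of the semigroup solution the two agree. Everything else is bookkeeping: the Cartesian Strichartz estimates are assumed known, and the change-of-variables computation was engineered precisely so that the admissibility condition kills the weight $(T-t)^{p\tfrac{d-2}{2}-p\tfrac{d}{q}-1}$.
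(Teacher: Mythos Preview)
Your homogeneous argument is essentially the paper's: identify $[\Sf_0(\tau)\ff]_1$ with a free Cartesian wave restricted to the cone, change variables, and invoke the classical Strichartz estimates. Two small points. First, your use of $\Ef_1$ together with a general $T$ is inconsistent; the identification $[\Sf_0(\tau)\ff]_1(\rho)=(Te^{-\tau})^{\frac{d-2}{2}}U(T-Te^{-\tau},Te^{-\tau}\rho)$ with $U(0,\cdot)=\Ef_1\ff$ only matches the initial data at $\tau=0$ when $T=1$ (the paper instead introduces the rescaling $\Af_T$ and the family $\Ef_T$). Since $\Sf_0$ is $T$-independent you may simply take $T=1$. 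Second, for $d=3$ the pair $(p,q)=(2,\infty)$ is the forbidden wave endpoint, so the blanket appeal to ``the standard Cartesian Strichartz estimate'' does not cover it; the paper handles $d=3$ by citing \cite{Don17}. Your observation that the Jacobian exponent vanishes for \emph{every} admissible pair is a mild streamlining over the paper, which proves the two endpoints $(2,\tfrac{2d}{d-3})$ and $(\infty,\tfrac{2d}{d-2})$ separately (the latter via Sobolev embedding) and then interpolates.

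For the inhomogeneous estimate your route is unnecessarily heavy and the identification step is not actually carried out. Because of the nonlinear time change $t=T-Te^{-\tau}$, the Duhamel integral $\int_0^\tau[\Sf_0(\tau-\sigma)\hfh(\sigma,\cdot)]_1\,d\sigma$ does \emph{not} line up with a Cartesian Duhamel integral in any straightforward way, so invoking Christ--Kiselev or $TT^*$ in Cartesian coordinates requires a genuine computation you have not supplied. The paper bypasses this entirely: once the homogeneous bound is in hand, Minkowski's inequality gives
\[
\left\|\int_0^\tau[\Sf_0(\tau-\sigma)\hfh(\sigma,\cdot)]_1\,d\sigma\right\|_{L^p_\tau(I)L^q(\B^d_1)}
\le \int_0^{\tau_0}\big\|[\Sf_0(\cdot)\hfh(\sigma,\cdot)]_1\big\|_{L^p_\tau(\R_+)L^q(\B^d_1)}\,d\sigma
\lesssim \int_0^{\tau_0}\|\hfh(\sigma,\cdot)\|_{\mathcal H}\,d\sigma,
\]
which is the desired estimate with no further identification needed.
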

\begin{proof}
For $d=3$ this is Proposition 2.2 in \cite{Don17} and so we can assume $d\geq 4.$
Let $T\in \left[\frac{1}{2},\frac{3}{2}\right]$, $\ff \in C^2 \times C^1(\overline{\B^d_1})$, and define $\Af_T:H^1\times L^2(\B^d_T)\to \mathcal{H}$ by
 \begin{align*}
\Af_T \ff =( Tf_1(T.),T^2 f_2(T.)).
\end{align*}
In view of the coordinate transformations done in section 2, the evolution $\Sf_0(.)\ff$
is given by the solution $u\in C^2(\R_+\times \R)$ restricted to the
lightcone $\Gamma^T$ of the equation
\begin{align*}
\begin{cases}
\left(\partial_t^2-\partial_r^2-\frac{d-1}{r}\partial_r
\right)u(t,r)=0\\
(u(0,.),\partial_0 u(0,.))=\Ef_T\Af_T^{-1}\ff,
\end{cases}
\end{align*}
where $\Ef_T$ is the Sobolev
extension from Lemma \ref{lem:extension}.
Therefore, $$
\left[\Sf_0(\tau)\ff\right]_1(\rho)=(Te^{-\tau})^{\frac{d-2}{2}}u(T-Te^{-\tau},Te^{-\tau}\rho).
$$
Now, let $(p,q)=(\frac{1}{2},\frac{2d}{d-3})$ and calculate
\begin{align*}
\|\left[\Sf_0(\tau)\ff\right]_1\|_{L^{\frac{2d}{d-3}}(\B^d_1)}&\lesssim
\|e^{\frac{2-d}{2}\tau}u(T-Te^{-\tau},e^{-\tau}.)\|_{L^{\frac{2d}{d-3}}(\B^d_1)}\\
&= e^{-\frac{1}{2}\tau}\|u(T-Te^{-\tau},.)\|_{L^{\frac{2d}{d-3}}(\B^d_1)}
\\
&\leq e^{-\frac{1}{2}\tau}\|u(T-Te^{-\tau},.)\|_{L^{\frac{2d}{d-3}}(\R^d)}.
\end{align*}
So,
\begin{align*}
\|\left[\Sf_0(\tau)\ff\right]_1\|_{L^2_\tau(\R_+)L^{\frac{2d}{d-3}}(\B^d_1)}
&\lesssim \left (\int_0^\infty \left(e^{-\frac{1}{2}\tau}\|u(T-Te^{-\tau},.)\|_{L^{\frac{2d}{d-3}}(\R^d)}\right)^2 d \tau\right)^{\frac{1}{2}}
\\
&\lesssim\|u\|_{L^2(\R_+)L^{\frac{2d}{d-3}}(\R^d)}
\lesssim\left\|\Ef_T\Af_T^{-1}\ff\right\|_{H^1\times L^2(\R^d)}
\\
&\lesssim\left\|\Af_T^{-1}\ff\right\|_{H^1\times L^2(\B^d_T)} \lesssim \|\ff\|_{\mathcal{H}}
\end{align*}
by the classical Strichartz estimates (see \cite{Tao06}, p.78, Theorem 2.6).
Next, the estimate 
\begin{align*}
\left\|\left[\Sf_0(\tau)\ff\right]_1\right\|_{L^\infty_\tau(\R_+)L^{\frac{2d}{d-2}}(\B^d_1)}\lesssim\|\ff\|_{\mathcal{H}}
\end{align*}
follows from the Sobolev embedding $ H^1(\B^d_1) \hookrightarrow L^{\frac{2d}{d-2}}(\B^d_1) $.
To obtain the estimate for general admissible pairs $(p,q),$ we interpolate with $\theta=\frac{2d}{q}+3-d$ to obtain
\begin{align*}
\|\left[\Sf_0(.)\ff\right]_1\|_{L^q(\B^d_1)}\leq \|\left[\Sf_0(.)\ff\right]_1\|_{L^{\frac{2d}{d-2}}(\B^d_1)}^{\theta}\|\left[\Sf_0(.)\ff\right]_1\|_{L^{\frac{2d}{d-3}}(\B^d_1)}^{1-\theta}.
\end{align*}
Hence,
\begin{align*}
\|\left[\Sf_0(.)\ff\right]_1\|_{L^p(\R_+)L^q(\B^d_1)}\leq& \|\left[\Sf_0(.)\ff\right]_1\|_{L^{\infty}(\R_+)L^{\frac{2d}{d-2}}(\B^d_1)}^{\theta}\|\left[\Sf_0(.)\ff\right]_1\|_{L^{p(1-\theta)}(\R_+)L^{\frac{2d}{d-3}}(\B^d_1)}^{1-\theta}
\\
=&\|\left[\Sf_0(.)\ff\right]_1\|_{L^{\infty}(\R_+)L^{\frac{2d}{d-2}}(\B^d_1)}^{\theta}\|\left[\Sf_0(.)\ff\right]_1\|_{L^{2}(\R_+)L^{\frac{2d}{d-3}}(\B^d_1)}^{1-\theta},
\end{align*}
given that $p=\frac{2q}{(d-2)q-2d}$. To obtain the inhomogeneous estimate, we let $I=[0,\tau_0)\subset [0,\infty)$ and use Minkowski's inequality to estimate
\begin{align*}
&\left\|\int_0^\tau\left[\Sf_0(\tau-\sigma)\hfh(\sigma,.)\right]_1 d\sigma\right\|_{L^p_\tau(I) L^q(\B^d_1)} 
\\
=&\left\|\int_0^{\tau_0}1_{[0,\tau_0]}(\tau-\sigma)\left[\Sf_0(\tau-\sigma)\hfh(\sigma,.)\right]_1 d\sigma\right\|_{L^p_\tau(I)L^q(\B^d_1)}
\\
\leq &\int_0^{\tau_0}\left\|1_{[0,\tau_0]}(\tau-\sigma)\left[\Sf_0(\tau-\sigma)\hfh(\sigma,.)\right]_1 \right\|_{L^p_\tau(\R_+)L^q(\B^d_1)}d\sigma
\\
\leq &\int_0^{\tau_0}\left\|\left[\Sf_0(\tau)\hfh(\sigma,.)\right]_1 \right\|_{L^p_\tau(\R_+)L^q(\B^d_1)}d\sigma
\\
\lesssim &\int_0^{\tau_0}\left\|\hfh(\sigma,.)\right\|_{\mathcal{H}}d\sigma.
\end{align*}
\end{proof}
As we intend to study solutions close to $u^T$, our next step is to (formally) linearise the the nonlinearity around $u^T$. Motivated by this we
make the ansatz $\Psi=\Phi+(c_d,\frac{d-2}{2}c_d)$ and
 define the formal nonlinear operator $\Nf$ as
\begin{align*}
\Nf(\uf):=\begin{pmatrix}
0\\
N(u_1)
\end{pmatrix}
\end{align*}
with
\begin{align*}
N(u):=|c_d+u|^{\frac{4}{d-2}}(c_d+u)-c_d^{\frac{d+2}{d-2}}-\frac{2d+d^2}{4}u
\end{align*}
as well as the linear operator $\Lf': \mathcal{H} \to \mathcal{H}$
\begin{align*}
\Lf'\uf :=\begin{pmatrix}
0\\
\frac{2d+d^2}{4}u_1
\
\end{pmatrix}.
\end{align*}
Note that $\Lf'$ is a compact operator on $\mathcal{H}$ and so, as a consequence, we obtain that
$
\Lf:=\Lf_0+\Lf'$
generates a semigroup $\Sf$ on $\mathcal{H}$.
This allows us to formally rewrite our equation in Duhamel form as
\begin{align}\label{integraleq}
\Phi(\tau)=\Sf(\tau)\uf+\int_0^\tau \Sf(\tau-\sigma)\Nf(\Phi(\sigma)) d \sigma.
\end{align}
\subsection{Spectral analysis of the perturbed equation}
The next step now is to give a description of the spectrum of $\Lf$.
\begin{lem}\label{lem:spectrum L}
The spectrum of $\Lf$, denoted by $\sigma(\Lf)$, satisfies $$\sigma(\Lf)\subset\{z\in\C:\Re(z)\leq0\}\cup \{1\}.$$
Furthermore, the point spectrum of $\Lf$ is contained in the set $\{z\in\C:\Re(z)<0\}\cup \{1\}$.
\end{lem}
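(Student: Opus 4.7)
The plan is to combine an abstract compact-perturbation argument with a Frobenius analysis of the reduced spectral ODE. Since $\Lf = \Lf_0 + \Lf'$ with $\Lf_0$ generating a uniformly bounded semigroup $\Sf_0$ and $\Lf'$ compact on $\H$, the Hille--Yosida theorem gives $\sigma(\Lf_0) \subset \{\Re z \leq 0\}$, and the stability of the essential spectrum under compact perturbations yields $\sigma_{\mathrm{ess}}(\Lf) = \sigma_{\mathrm{ess}}(\Lf_0) \subset \{\Re z \leq 0\}$. Consequently every point of $\sigma(\Lf) \cap \{\Re z > 0\}$ is an isolated eigenvalue of finite algebraic multiplicity, and the two assertions of the lemma reduce to showing that the point spectrum of $\Lf$ in the closed right half-plane consists exactly of $\{1\}$.

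To that end I would repeat the reduction that produced Eq.~\eqref{free generalised spectral eq}, this time also absorbing the contribution of $\Lf'$ into the coefficient of $u_1$. The homogeneous equation $(\lambda - \Lf)\uf = 0$ is thereby equivalent to a second order ODE in $u_1$ on $(0,1)$ of the same form as Eq.~\eqref{free generalised spectral eq} but with the constant term shifted by the potential, together with the algebraic relation $u_2 = (\lambda + \tfrac{d-2}{2})u_1 + \rho u_1'$. This ODE has regular singular points at $\rho=0$ with indicial exponents $0$ and $2-d$, and at $\rho=1$ with indicial exponents $0$ and $\tfrac{1}{2} - \lambda$. The requirement $\uf \in \H$ excludes the $\rho^{2-d}$ branch at the origin for $d\geq 3$, and for any $\lambda$ with $\Re\lambda \geq 0$ a direct calculation shows that the $(1-\rho)^{1/2-\lambda}$ branch at $\rho=1$ is not in $H^1$, the marginal case $\Re\lambda = 0$ still failing through non-integrability of the derivative. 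Hence at each endpoint the $\H$-admissible eigenfunctions span a one dimensional space, and any eigenvalue $\lambda$ in $\{\Re z \geq 0\}$ is encoded as a zero of a Wronskian-type connection function $W(\lambda)$, analytic in $\lambda$.

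The existence of one such zero is supplied directly by the time translation symmetry of Eq.~\eqref{Eq:startingeq2}: differentiating $u^T$ in $T$ at $T=1$ and transplanting into similarity coordinates gives an explicit smooth radial $\gf \in D(\Lf)$ with $\Lf \gf = \gf$, so that $W(1) = 0$. To exclude any other zero of $W$ in the closed right half-plane, the strategy of \cite{DonRao20,DonWal22a} applies: construct the admissible fundamental solutions at both endpoints by Volterra iteration with explicit leading Frobenius behavior, and then compute $W(\lambda)$ explicitly, for example after the Liouville--Green substitution based on $\varphi(\rho) = \tfrac{1}{2}\log\frac{1+\rho}{1-\rho}$ that recasts the problem as a perturbed Bessel equation. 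The main technical obstacle is the verification of nonvanishing of $W$ on the imaginary axis: no sign-definite energy identity for $\Lf$ is directly available there, since $\Lf'$ fails to be antisymmetric with respect to $(\cdot\mid\cdot)_E$, so one cannot simply quote the dispersive estimate for $\widetilde\Lf$. An explicit ODE computation seems unavoidable, and once carried out it delivers both the inclusion $\sigma(\Lf) \subset \{\Re z \leq 0\} \cup \{1\}$ and the sharper statement that the point spectrum misses the imaginary axis.
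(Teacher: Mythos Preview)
Your abstract setup matches the paper exactly: $\sigma(\Lf_0)\subset\{\Re z\le 0\}$ by Hille--Yosida, the compactness of $\Lf'$ forces any spectral point with $\Re\lambda>0$ to be an isolated eigenvalue of finite multiplicity, and the reduction of $(\lambda-\Lf)\uf=0$ to a second-order ODE in $u_1$ with the Frobenius indices you state at both endpoints is precisely what the paper does.

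The gap is in the exclusion of eigenvalues on $\{\Re z\ge 0\}\setminus\{1\}$. You set up a connection function $W(\lambda)$ and propose to evaluate it via Volterra iterations and the Liouville--Green change of variables $\varphi$, but you do not carry this out; the sentence ``once carried out it delivers\ldots'' is not a proof. More seriously, the machinery you invoke is what the paper uses in Section~3 to construct the resolvent \emph{asymptotically} for large $|\omega|$; it yields perturbative expansions with $\O(\langle\omega\rangle^{-1})$ errors, not an exact formula, and so cannot by itself locate or exclude zeros of $W$ at bounded $\lambda$. Your acknowledgment that ``an explicit ODE computation seems unavoidable'' is correct, but the method you name does not supply one.

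The paper's route avoids this entirely. The substitution $z=\rho^2$, $v(z)=u_1(\sqrt z)$ turns the spectral ODE into a hypergeometric equation with parameters $a=\tfrac{\lambda+d}{2}$, $b=\tfrac{\lambda-1}{2}$, $c=\tfrac{d}{2}$. The unique (up to scalar) solution regular at $\rho=0$ is then $u_0(\rho)={}_2F_1(a,b;c;\rho^2)$. For $\lambda=1$ one has $b=0$, so $u_0\equiv 1$ and the eigenfunction is explicit. For $\lambda\ne 1$ with $\Re\lambda\ge 0$, the derivative identity $\partial_z\,{}_2F_1(a,b;c;z)=\tfrac{ab}{c}\,{}_2F_1(a+1,b+1;c+1;z)$ together with the classical limit
\[
\lim_{z\to 1^-}(1-z)^{\frac12+\lambda}\,{}_2F_1(a+1,b+1;c+1;z)=\frac{\Gamma(c+1)\Gamma(a+b-c+1)}{\Gamma(a+1)\Gamma(b+1)}\neq 0
\]
shows that $|u_0'(\rho)|\sim (1-\rho)^{-\frac12-\Re\lambda}$ near $\rho=1$, hence $u_0\notin H^1(\B^d_1)$. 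This single classical asymptotic replaces your proposed Wronskian analysis and handles the open half-plane and the imaginary axis in one stroke. You should recognise the hypergeometric structure and use it here; save the Volterra/Liouville--Green apparatus for the resolvent bounds, where exact information is neither available nor needed.
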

\begin{proof}
From the fact that $\Lf_0$ generates a $C_0$-semigroup we conclude that $$\sigma(\Lf_0)\subset \{z\in \C:\Re\lambda\leq 0\}.$$
Moreover, as $\Lf'$ is a compact perturbation, any spectral point $\lambda$ with $\Re(\lambda)>0$ has to be an eigenvalue of finite algebraic and geometric multiplicity.
By repeating previous calculations we obtain that the eigenvalue equation $(\lambda-\Lf)\uf=0$ is equivalent to the second order differential equation
\begin{align}\label{generalised spectral eq}
(1-\rho^2)u_1''(\rho)+\left(\frac{d-1}{\rho}-(2\lambda+d)\rho\right)u_1'(\rho)-\left(\lambda\left(\lambda+d-1\right)-d\right)u_1(\rho)=0
\end{align}
By  setting $z=\rho^2$ and $v(z)= u(\sqrt{z})$ Eq.~\eqref{generalised spectral eq} turns into 
\begin{align}\label{eq:hypergeo}
z(1-z)v''(z)&+\frac{d}{2} v'(z)-\left(\lambda+\frac{d+1}{2}\right)z v'(z) -\frac{1}{4}\left(\lambda\left(\lambda+d-1\right)-d\right)v_1(z)=0,
\end{align}
which, for $a=\frac{\lambda}{2}+\frac{d}{2},b=\frac{\lambda}{2}-\frac{1}{2}
$, and $c=\frac{d}{2}$ is of hypergeometric form, i.e.,
\begin{align*}
z(1-z)v''(z)+(c-(a+b+1)z)v'(z)-abv(z)=0.
\end{align*} 
Since the Frobenius indices at $\rho=0$ are given $\{0,\frac{2-d}{2}\}$, a fundamental system for Eq.~\eqref{eq:hypergeo} is given by the hypergeometric function
\begin{align*}
v_0(z)=&\, _2F_1(a,b;c;z)
\end{align*}
and a second solution which fails to be in $H^1(\B^d_1)$.
Therefore, any eigenfunction has to be a multiple of $u_0(\rho)=v_0(\rho^2)$.
We claim that for $\Re \lambda\geq 0$ $u_0(\rho)\in H^1(\B^d_1)$, if and only if $\lambda=1$.
That $1$ is indeed an eigenvalue follows immediately from
\begin{align*}
_1F_2\left(\frac{d+1}{2},0;\frac{d}{2};z\right)=1.
\end{align*}
For the other implication we first remark that
\begin{align*}
\partial_z&\, _2F_1(a,b;c;z)=\frac{ab}{c}\, _2F_1(a+1,b+1;c+1;z).
\end{align*}
Further, we infer that the asymptotic behavior of $_2F_1(a+1,b+1;c+1;z)$ as $z\to 1^-$,  is determined by $\Re(c-a-b-1)=-\frac{1}{2}-\lambda$. In particular,  as a consequence of Eq.~(10.12) on page 166 in \cite{Olv97} we have that for $\lambda\neq 1$ with $\Re\lambda\geq0$
\begin{align*}
\lim_{z \to 1 }\, _2F_1(a+1,b+1;c+1;z) (1-z)^{\frac{1}{2}+\lambda}=\frac{\Gamma(c+1)\Gamma(a+b-c+1)}{\Gamma(a+1)\Gamma(b+1)}.
\end{align*}
Consequently, for $\Re\lambda\geq 0$, $u_0'(\rho)$ can only be square integrable on $\B^d_1$ if $\lambda=1$.
\end{proof}
Next, we are going to compute the multiplicities of the eigenvalue $1$. 
\begin{lem}\label{lem:mulitplicities}
The algebraic and geometric multiplicities of the eigenvalue $1\subset \sigma(\Lf)$ both equal 1 and an eigenfunction is given by
\begin{align*}
\gf(\rho)=\begin{pmatrix}
2\\
d
\end{pmatrix}.
\end{align*}
\end{lem}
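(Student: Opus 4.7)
The plan is to reduce both multiplicity statements to an ODE analysis, exploiting the correspondence between $(\lambda - \Lf)\uf = \ff$ and the scalar second-order equation for $u_1$ that was already set up in the proof of Lemma~\ref{lem:spectrum L}.

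For the geometric multiplicity, the proof of Lemma~\ref{lem:spectrum L} already established that every eigenvector of $\Lf$ at $\lambda = 1$ has first component proportional to ${}_2F_1\bigl(\tfrac{d+1}{2},0;\tfrac{d}{2};\rho^2\bigr) \equiv 1$, so $u_1$ is necessarily constant. Normalising $u_1 \equiv 2$ and using the algebraic reduction $u_2 = \lambda u_1 + \rho u_1' + \tfrac{d-2}{2} u_1$ with $\lambda = 1$ gives $u_2 \equiv d$, so $\ker(1-\Lf) = \Span\{\gf\}$, and the geometric multiplicity equals $1$.

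To pin down the algebraic multiplicity, I argue by contradiction and assume that there exists $\uf \in D(\Lf)$ with $(1-\Lf)\uf = \gf$. Substituting $\lambda = 1$, $f_1 = 2$, $f_2 = d$ into the reduced inhomogeneous ODE (the analogue of \eqref{free generalised spectral eq} adjusted by the $+d$ term coming from the compact perturbation $\Lf'$) one finds $F_1 = 2d+2$, while the zero-order coefficient $\lambda(\lambda + d - 1) - d$ vanishes at $\lambda=1$. Consequently, $u_1$ must satisfy
\[
(1-\rho^2) u_1''(\rho) + \Bigl(\tfrac{d-1}{\rho} - (d+2)\rho\Bigr) u_1'(\rho) = -(2d+2),
\]
which is a first-order linear equation in $u_1'$. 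A fundamental system of the associated homogeneous equation is given by $\phi_1 \equiv 1$ together with a second solution whose derivative is $\phi_2'(\rho) = \rho^{1-d}(1-\rho^2)^{-3/2}$.

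A particular solution $u_p$ produced by variation of parameters (or by integrating the first-order ODE for $u_1'$ directly) satisfies $u_p'(\rho) \sim C\,(1-\rho)^{-3/2}$ as $\rho \to 1^-$, with a nonzero constant $C$ proportional to $\int_0^1 s^{d-1}\sqrt{1-s^2}\,ds$. To restore $u_1' \in L^2(\rho^{d-1}d\rho)$ near the boundary one must therefore subtract a specific, nonzero multiple of $\phi_2$ from $u_p$; but $\phi_2'(\rho)$ blows up like $\rho^{-(d-1)}$ as $\rho \to 0^+$, and for any nonzero coefficient this forces $|u_1'(\rho)|^2 \rho^{d-1}$ to behave like a nonzero multiple of $\rho^{-(d-1)}$ near the origin, which is non-integrable for every $d \geq 3$. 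This contradiction rules out the existence of a generalised eigenvector, so the algebraic multiplicity also equals $1$. The main technical point — and the only step where a computation is really unavoidable — is the endpoint matching: one must confirm that the coefficient in front of $(1-\rho)^{-3/2}$ in the singular part of $u_p'$ is genuinely nonzero, so that the unique multiple of $\phi_2$ needed to remove it is then truly incompatible with $H^1$ regularity at the origin.
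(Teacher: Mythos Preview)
Your proof is correct and follows essentially the same route as the paper: both reduce the generalised eigenvector equation $(1-\Lf)\uf=\gf$ to the scalar ODE $(1-\rho^2)u_1''+\bigl(\tfrac{d-1}{\rho}-(d+2)\rho\bigr)u_1'=-(2d+2)$ and show that the two endpoint conditions at $\rho=0$ and $\rho=1$ are incompatible precisely because $\int_0^1 s^{d-1}\sqrt{1-s^2}\,ds>0$. The only cosmetic difference is that you exploit the absence of a zero-order term to work directly with the first-order equation for $u_1'$, whereas the paper applies variation of constants to the second-order equation; both arguments pin down the same one-parameter family of candidates and the same obstruction.
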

\begin{proof}
A direct calculation shows that $\gf$ is indeed an eigenfunction corresponding to the eigenvalue 1. Assume that there exists another eigenfunction $\widetilde{\gf}$. 
As before this implies that $\widetilde{g}_1$ solves the differential equation \eqref{generalised spectral eq}, which has a fundamental system given by $ g_1$ and 
\begin{align*}
h_1(\rho)&=\begin{cases} \int_{\frac{1}{2}}^\rho \frac{s^{1-d}}{(1-s^2)^{\frac{3}{2}}}ds \text{ for }\rho \geq \frac{1}{2}
\\ 
-\int_\rho^{\frac{1}{2}} \frac{s^{1-d}}{(1-s^2)^{\frac{3}{2}}}ds \text{ for }\rho < \frac{1}{2}.
\end{cases}
\end{align*}
As $h_1$ is not in $H^1(\B^d_1)$,
 $\widetilde{g}_1$ has to be a multiple of $g_1$ and since the second component is uniquely determined by the first, we infer that $\widetilde{\gf}=c\gf$ for some constant $c\in \C$. \\
Let now $\Pf$ be the spectral projection associated to the eigenvalue $1$. We split $H^1\times L^2(\B^d_1)$ into the two closed subspaces $\rg\Pf$ and $\ker\Pf$. Analogously, we decompose the operator $\Lf$ into $\Lf_{\rg\Pf}:\rg\Pf \to \rg \Pf$ and 
$\Lf_{\ker\Pf}:\ker\Pf \to \ker \Pf$. The inclusion $span\{ \gf \} \subset \rg\Pf$ is immediate. For the other one we remark that the operator $(\textbf{1}-\Lf_{\rg\Pf})$ is nilpotent as the only eigenvalue of $\Lf_{\rg\Pf}$ is $1$. Let $n\geq1$ be the minimal index such that $(\textbf{1}-\Lf_{\rg\Pf})^n\uf =\textbf{0}$ for all $\uf \in \rg\Pf \cap D(\Lf).$
If $n=1$ we are done. If not, there exists a nontrivial $\vf \in D(\Lf)$ with $(1-\Lf)\vf =\gf$, which implies 
\begin{align}
(1-\rho^2)v_1''(\rho)+\left(\frac{d-1}{\rho}-(d+2)\rho\right)v_1'(\rho)=-(2d+2).
\end{align}
Since the Wronskian of $g_1$ and $h_1$ is given by
\begin{align*}
W(g_1,h_1)(\rho)=2\rho^{1-d}(1-\rho^2)^{-\frac{3}{2}},
\end{align*}
the variations of constants formula suggests that $v_1$ has to be of the form
\begin{align*}
v_1(\rho)=&c_0+c_1 h_1(\rho)+(2d+2)h_1(\rho)\int_{\rho}^{1} s^{d-1}(1-s^2)^{\frac{1}{2}} ds
\\
&+(2d+2)\int_{0}^{\rho} s^{d-1}(1-s^2)^{\frac{1}{2}}
h_1(s) ds
\end{align*}
for $c_0,c_1\in \C$.
Note, that $h_1(\rho)\int_{\rho}^{1} s^{d-1}(1-s^2)^{\frac{1}{2}} ds \in L^2([\frac{1}{2},1])$.
Moreover,  $$s^{d-1}(1-s^2)^{\frac{1}{2}}
h_1(s) $$ is integrable on $[0,1]$ and so, given that $h_1$ is not in $L^2(\B^d_1)$, $c_1$ 
has to equal $0$. However, from the divergence of $h_1$ at $\rho=0$, we see that for $v_1$ to be an element of $H^1(\B^d_1)$ the integral 
\begin{align*}
\int_{0}^{1} s^{d-1}(1-s^2)^{\frac{1}{2}} ds
\end{align*}
would have to vanish, which, by the positivity of the integrand on $(0,1)$, cannot happen.
\end{proof}
Next, we obtain a useful bound on the resolvent of $\Lf$.
\begin{lem}\label{lem:resbound}
For every $\varepsilon>0$ there exist constants $R_\varepsilon>0$ and $C_\varepsilon>0$ such that the resolvent operator of $\Lf$ denoted by $\Rf_{\Lf}(\lambda)$ satisfies the estimate
\begin{align*}
\|\Rf_{\Lf}(\lambda)\|\leq C_\varepsilon
\end{align*}
for all $\lambda \in \C$ with $\Re\lambda\geq \varepsilon$ and $|\lambda|\geq R_\varepsilon$.
\end{lem}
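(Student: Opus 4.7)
The plan is to exploit that $\Lf=\Lf_0+\Lf'$ is a compact perturbation of the generator of a uniformly bounded semigroup, and to combine a Neumann series argument with a vector-valued Riemann-Lebesgue lemma. Since $\Sf_0$ is uniformly bounded, the Hille-Yosida theorem gives $\|\Rf_{\Lf_0}(\lambda)\|\lesssim 1/\Re\lambda\leq 1/\varepsilon$ whenever $\Re\lambda\geq\varepsilon$. The factorization $\lambda-\Lf=(\lambda-\Lf_0)(\I-\Rf_{\Lf_0}(\lambda)\Lf')$ yields, whenever the second factor is invertible,
\[
\Rf_{\Lf}(\lambda)=(\I-\Rf_{\Lf_0}(\lambda)\Lf')^{-1}\Rf_{\Lf_0}(\lambda),
\]
so the whole task reduces to showing that $\|\Rf_{\Lf_0}(\lambda)\Lf'\|_{\H\to\H}\to 0$ as $|\lambda|\to\infty$ with $\Re\lambda\geq\varepsilon$. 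Once this is in hand, a Neumann series converges and yields $\|(\I-\Rf_{\Lf_0}(\lambda)\Lf')^{-1}\|\leq 2$ for $|\lambda|\geq R_\varepsilon$, which together with the bound on $\Rf_{\Lf_0}$ gives the claim.

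I would first establish pointwise decay: for every fixed $\gf\in\H$, $\|\Rf_{\Lf_0}(\lambda)\gf\|\to 0$ in the stated region. Starting from the Laplace representation
\[
\Rf_{\Lf_0}(\lambda)\gf=\int_0^\infty e^{-\lambda\tau}\Sf_0(\tau)\gf\,d\tau,
\]
the regime $\Re\lambda\to\infty$ is handled by the trivial bound $\lesssim\|\gf\|/\Re\lambda$. In a vertical strip $\Re\lambda\in[\varepsilon,K]$ with $|\Im\lambda|\to\infty$, the map $\tau\mapsto e^{-\Re\lambda\,\tau}\Sf_0(\tau)\gf\,\mathbf{1}_{[0,\infty)}(\tau)$ lies in $L^1(\R,\H)$ with uniformly controlled norm, and the above integral is its vector-valued Fourier transform evaluated at $\Im\lambda$. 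The vector-valued Riemann-Lebesgue lemma then yields the decay, and continuity of the integrand in $L^1(\R,\H)$ as $\Re\lambda$ varies in $[\varepsilon,K]$ upgrades it to uniform decay on the whole strip. Splicing the two regimes yields the pointwise statement.

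To promote pointwise decay to operator-norm decay, I would use that $\Lf'$ is compact, so the set $\{\Lf'\ff:\|\ff\|_{\H}\leq 1\}$ is relatively compact in $\H$. Since the family $\{\Rf_{\Lf_0}(\lambda)\}_{\Re\lambda\geq\varepsilon}$ is equi-Lipschitz with Lipschitz constant $\lesssim 1/\varepsilon$, pointwise decay on a totally bounded set upgrades to uniform decay by a standard finite cover argument. This delivers $\|\Rf_{\Lf_0}(\lambda)\Lf'\|_{\H\to\H}\to 0$ uniformly in the region, and combining with $\|\Rf_{\Lf_0}(\lambda)\|\lesssim 1/\varepsilon$ produces the desired bound $\|\Rf_{\Lf}(\lambda)\|\leq C_\varepsilon$. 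The main technical subtlety, though not really difficult, is arranging the uniformity in $\Re\lambda$ throughout the whole half-plane rather than just along a single vertical line; this is exactly what the splitting into the two regimes above is designed to handle.
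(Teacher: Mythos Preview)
Your argument is correct and is exactly the standard approach that the paper invokes by citing \cite{DonRao20}: one writes $\Rf_\Lf(\lambda)=(\I-\Rf_{\Lf_0}(\lambda)\Lf')^{-1}\Rf_{\Lf_0}(\lambda)$, uses the Hille--Yosida bound $\|\Rf_{\Lf_0}(\lambda)\|\lesssim 1/\varepsilon$, and shows $\|\Rf_{\Lf_0}(\lambda)\Lf'\|\to 0$ via the Laplace representation, the vector-valued Riemann--Lebesgue lemma, and compactness of $\Lf'$.
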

\begin{proof}
This can be proven in the same way as Lemma 2.6 in \cite{DonRao20}.
\end{proof}
By employing the Gearhart-Prüss Theorem we obtain the following result as a direct consequence of Lemmas \ref{lem:mulitplicities} and \ref{lem:resbound}.
\begin{lem}
For any $\varepsilon>0$, there exists a constant $C_\varepsilon>0$ such that
\begin{align*}
\|\Sf(\tau)(\I-\Pf)\ff\|_{\mathcal{H}}\leq C_\varepsilon e^{\varepsilon\tau}\|(\I-\Pf)\ff\|_{\mathcal{H}}
\end{align*}
for all $\tau\geq 0$ and all $\ff\in \mathcal{H}$.
\end{lem}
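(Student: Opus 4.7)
The plan is to invoke the Gearhart–Prüss theorem on the spectral complement of the unstable eigenvalue, as the text indicates, but I want to spell out the three ingredients that make this work in the present setting.

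First, I would use the spectral projection $\Pf$ associated to the isolated eigenvalue $1$ (which has finite algebraic multiplicity by Lemma \ref{lem:mulitplicities}, so $\Pf$ is bounded). Since $\Pf$ commutes with $\Lf$ on $D(\Lf)$ and hence with the semigroup $\Sf$, the subspaces $\rg\Pf$ and $\ker\Pf$ are both invariant under $\Sf$ and $\Lf$. Restricting, I obtain a $C_0$-semigroup $\Sf_{\ker\Pf}(\tau)$ on the Hilbert space $\ker\Pf\subset \mathcal{H}$, generated by $\Lf_{\ker\Pf}$, whose spectrum is $\sigma(\Lf)\setminus\{1\}\subset \{z\in\C:\Re z\leq 0\}$ by Lemma \ref{lem:spectrum L}.

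Next, I need a uniform resolvent bound on every half-plane $\{\Re\lambda\geq \varepsilon\}$ for $\Lf_{\ker\Pf}$. Lemma \ref{lem:resbound} provides this for $|\lambda|\geq R_\varepsilon$ (and the restriction of the resolvent is controlled by the full resolvent composed with $\I-\Pf$). On the complementary set $\{\lambda\in\C:\Re\lambda\geq\varepsilon,\ |\lambda|\leq R_\varepsilon\}$, which is compact and disjoint from $\sigma(\Lf_{\ker\Pf})$, the resolvent $\lambda\mapsto \Rf_{\Lf_{\ker\Pf}}(\lambda)$ is continuous and therefore bounded. Combining these two bounds yields
\[
\sup_{\Re\lambda\geq \varepsilon}\|\Rf_{\Lf_{\ker\Pf}}(\lambda)\|<\infty.
\]

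The Gearhart–Prüss theorem on Hilbert space then gives $\omega_0(\Sf_{\ker\Pf})\leq \varepsilon$, i.e., there exists $C_\varepsilon>0$ with
\[
\|\Sf_{\ker\Pf}(\tau)\|\leq C_\varepsilon e^{\varepsilon\tau}\qquad (\tau\geq 0).
\]
Applying this to $(\I-\Pf)\ff\in\ker\Pf$ and using $\Sf(\tau)(\I-\Pf)\ff=\Sf_{\ker\Pf}(\tau)(\I-\Pf)\ff$ yields the claimed inequality. No real obstacle arises; the only point of care is to verify that the invariance of $\ker\Pf$ under $\Sf$ together with Lemma \ref{lem:resbound} indeed hands over the uniform resolvent bound for the \emph{restricted} generator, which is immediate since $\Rf_{\Lf_{\ker\Pf}}(\lambda)=\Rf_{\Lf}(\lambda)|_{\ker\Pf}$ on the right half-plane.
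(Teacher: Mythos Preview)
Your proposal is correct and follows exactly the approach indicated in the paper, which simply states that the lemma is a direct consequence of the Gearhart--Pr\"uss theorem together with Lemmas \ref{lem:mulitplicities} and \ref{lem:resbound}. You have merely spelled out the standard details (invariance of $\ker\Pf$, the compactness argument for small $|\lambda|$, and the identification $\Rf_{\Lf_{\ker\Pf}}(\lambda)=\Rf_{\Lf}(\lambda)|_{\ker\Pf}$) that the paper leaves implicit.
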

\section{Construction of the Green function}
In this section we take a closer look at the generalized spectral equation
\begin{equation}\label{generaleq}
\begin{split}
(1-\rho^2)u_1''(\rho)+\left(\frac{d-1}{\rho}-(2\lambda+d)\rho\right)u_1'(\rho)&-\left(\lambda\left(\lambda+d-1\right)+\frac{d(d-2)}{4}\right)
\\
&+V(\rho)u_1(\rho)=-F_\lambda(\rho)
\end{split}
\end{equation}
for an arbitrary potential $V\in C^{\infty}([0,1])$.
To study Eq. \eqref{generaleq}, we will often make use of functions of symbol type, which we define as follows. Let $I\subset \R$, $\rho_0\in\overline{I}$, and $\alpha \in \R$. We say that a smooth function $f:I \to \R$ is of symbol type and write $f(\rho)=\O((\rho_0-\rho)^{\alpha})$ if
\begin{align*}
|\partial_\rho^n f(\rho)|\lesssim_n |\rho_0-\rho|^{\alpha-n},
\end{align*}
 for all $\rho \in I$ and all $n\in \mathbb{N}_0$.
A discussion of symbol calculus can for instance be found in \cite{Don17} and \cite{DonRao20}. We will also make use of the ``Japanese bracket'' $\langle x\rangle:=\sqrt{1+|x|^2}$ and lastly, whenever we are given a function of the form $f(\rho,\lambda)$, then $f'(\rho,\lambda)$ denotes $\partial_\rho f(\rho,\lambda)$.
\subsection{Construction of a fundamental system}
To rid ourselves of the first order term we set
$$v(\rho):=\rho^{\frac{d-1}{2}}(1-\rho^2)^{\frac{1}{4}+\frac{\lambda}{2}}u(\rho),$$
 which, for $F_\lambda=0$ turns Eq.~\eqref{generaleq} into
 \begin{align}\label{nofirstorder}
v''(\rho)+\frac{(6+4\lambda-4\lambda^2)\rho^2+4d(1-\rho^2)-d^2(1-\rho^2)-3}{4\rho^2(1-\rho^2)^2}v(\rho)=\frac{V(\rho)}{1-\rho^2}v(\rho).
 \end{align}
As we are only interested in values of $\lambda$ that are close to the imaginary axis,
we assume that $\lambda=\varepsilon+ i\omega$ with $ \varepsilon \in [0,\frac{1}{4}]\cup [1-\frac{1}{4},1]$ and $\omega \in \R$.
To analyse Eq.~\eqref{nofirstorder}, we will make use of the diffeomorphism $\varphi:(0,1)\to (0,\infty)$, which is given by
 $$
 \varphi(\rho)=\frac{1}{2}(\log(1+\rho)-\log(1-\rho)).
 $$
Note that 
 $$
 \varphi'(\rho):=\frac{1}{(1-\rho^2)}
 $$
 and that the associated Liouville-Green Potential $Q_{\varphi}$, which is defined as
 $$
 Q_\varphi(\rho)=-\frac{3}{4}\frac{\varphi''(\rho)^2}{\varphi'(\rho)^2}+\frac{1}{2}\frac{\varphi'''(\rho)}{\varphi'(\rho)^2},
 $$ is given by
 $$
 Q_{\varphi}(\rho )=\frac{1}{(1-\rho^2)^2}.
 $$
 Motivated by this, we rewrite Eq. \eqref{nofirstorder} as
  \begin{align}
&v''(\rho)-\left(\frac{1-4\lambda+4\lambda^2}{4(1-\rho^2)^2}-\frac{4d-d^2-3}{4\varphi(\rho)^2(1-\rho^2)^2}+Q_{\varphi}(\rho)\right)v(\rho)\nonumber
\\
=&\left(-\frac{4d-d^2-3}{4\rho^2(1-\rho^2)}+\frac{4d-d^2-3}{4\varphi(\rho)^2(1-\rho^2)^2}+\frac{V(\rho)}{1-\rho^2}\right)v(\rho). 
 \end{align}
We now perform a Liouville-Green transformation, i.e., we set  $w(\varphi(\rho)):= \varphi'(\rho)^{\frac{1}{2}}v(\rho)$ which transforms
\begin{align}\label{beforebessel}
v''(\rho)-\left(\frac{1-4\lambda+4\lambda^2}{4(1-\rho^2)}-\frac{4d-d^2-3}{4\varphi(\rho)^2(1-\rho^2)^2}+Q_{\varphi}(\rho)\right)v(\rho)=0
\end{align}
into the equation
\begin{equation}\label{bessel}
w''(\varphi(\rho))-\left(\frac{1}{2}-\lambda\right)^2w(\varphi(\rho))+\frac{4d-d^2-3}{4\varphi(\rho)^2}w(\varphi(\rho))=0.
\end{equation}
Eq.~\eqref{bessel} is a Bessel equation and a fundamental system for it is given by
\begin{align*}
&\sqrt{\varphi(\rho)}J_{\frac{d-2}{2}}(a(\lambda)\varphi(\rho))\\
&\sqrt{\varphi(\rho)}Y_{\frac{d-2}{2}}(a(\lambda)\varphi(\rho))
\end{align*}
where $J_\nu $ and $Y_\nu$ are the Bessel functions of the first and second kind respectively, and with $a(\lambda)=i(\frac{1}{2}- \lambda)$. From this, we infer that Eq.~\eqref{beforebessel} has the two linearly independent solutions
\begin{align*}
b_{1}(\rho,\lambda)&= \sqrt{(1-\rho^2)\varphi(\rho)}J_{\frac{d-2}{2}}(a(\lambda)\varphi(\rho))\\
b_{2}(\rho,\lambda)&=\sqrt{(1-\rho^2)\varphi(\rho)}Y_{\frac{d-2}{2}}(a(\lambda)\varphi(\rho)).
\end{align*}
Consider now the set
\begin{align*}
\{z\in \C: 0<|z|< r\}\cap\{ z \in \C: 0 \leq \arg z <\pi\} 
\end{align*} 
for $r>0$ fixed, where $\arg$ denotes the (principal branch of the) argument of $z$. A Taylor expansion shows that on this set, the Bessel function $J_\nu$ satisfies
\begin{align}\label{rep:BesselJ}
J_\nu(z)= z^\nu \left(\frac{1}{2^\nu\Gamma(\nu+1)}+\O(z^2)\right).
\end{align}
Further, when $\nu$ is an integer we have that
\begin{align}\label{rep:BesselYint}
Y_\nu(z)= -z^{-\nu}\left(2^\nu\frac{(\nu-1)!}{\pi }+\O(z)\right),
\end{align}
while for half integers
\begin{align}\label{rep:BesselYhalf}
Y_\nu(z)= -z^{-\nu}\left(\frac{2^\nu}{\sin(\nu \pi)\Gamma(-\nu+1)}+\O(z)\right).
\end{align}
In the half integer case we could of course explicitly calculate $J_\nu$ and $Y_\nu$ via their recurrence relations, this is however not needed.
\begin{lem} \label{Lem:hom near 1}
Let $\rho \in [\frac{1}{|a(\lambda)|},1)$. Then,
the equation 
 \begin{align}\label{Eq:hom near 1}
v''(\rho)+\frac{(6+4\lambda-4\lambda^2)\rho^2+4d(1-\rho^2)-d^2(1-\rho^2)-3}{4\rho^2(1-\rho^2)^2}v(\rho)=0
 \end{align}
 has a fundamental system given by
 \begin{align*}
 h_1(\rho,\lambda)&=\frac{(1+\rho)^{\frac{3}{4}-\frac{\lambda}{2}}(1-\rho)^{\frac{1}{4}+\frac{\lambda}{2}}}{\sqrt{a(\lambda)}}[1+\O(\rho^{-1}(1-\rho)\langle\omega\rangle^{-1})]
 \\
 h_2(\rho,\lambda)&=\frac{(1+\rho)^{\frac{1}{4}+\frac{\lambda}{2}}(1-\rho)^{\frac{3}{4}-\frac{\lambda}{2}}}{\sqrt{a(\lambda)}}[1+\O(\rho^{-1}(1-\rho)\langle\omega\rangle^{-1})].
 \end{align*}
 
\end{lem}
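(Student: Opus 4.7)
The plan is to construct $h_1$ and $h_2$ as small perturbations of the WKB-type Ansätze
\[
\psi_1(\rho,\lambda) := \frac{(1+\rho)^{\frac{3}{4}-\frac{\lambda}{2}}(1-\rho)^{\frac{1}{4}+\frac{\lambda}{2}}}{\sqrt{a(\lambda)}}, \qquad
\psi_2(\rho,\lambda) := \frac{(1+\rho)^{\frac{1}{4}+\frac{\lambda}{2}}(1-\rho)^{\frac{3}{4}-\frac{\lambda}{2}}}{\sqrt{a(\lambda)}}.
\]
The exponents $\tfrac14+\tfrac\lambda2$ and $\tfrac34-\tfrac\lambda2$ are precisely the Frobenius indices of Eq.~\eqref{Eq:hom near 1} at the regular singular point $\rho=1$ (the indicial equation is $\alpha(\alpha-1) + \tfrac{3+4\lambda-4\lambda^2}{16} = 0$), and the normalization $1/\sqrt{a(\lambda)}$ is chosen so that $W(\psi_1,\psi_2) = 2i$ is a nonzero constant.

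The key algebraic observation is that each $\psi_j$ exactly solves the singular part of Eq.~\eqref{Eq:hom near 1}, namely
\[
\psi_j''(\rho,\lambda) + \frac{3+4\lambda-4\lambda^2}{4(1-\rho^2)^2}\psi_j(\rho,\lambda) = 0.
\]
Substituting $v = \psi_j(1+y_j)$ into Eq.~\eqref{Eq:hom near 1} therefore causes all $\lambda$-dependence to collapse into $\psi_j$, leaving the clean, $\lambda$-independent reduced equation
\[
y_j''(\rho) + 2\frac{\psi_j'(\rho,\lambda)}{\psi_j(\rho,\lambda)}y_j'(\rho) = \frac{(d-1)(d-3)}{4\rho^2(1-\rho^2)}\bigl(1 + y_j(\rho)\bigr).
\]
Reduction of order together with the boundary condition $y_j \to 0$ as $\rho \to 1^-$ recasts this as the Volterra integral equation
\[
y_j(\rho) = \int_\rho^1 K_j(\rho,s,\lambda)\bigl(1+y_j(s)\bigr)\,ds, \quad
K_j(\rho,s,\lambda) := \frac{(d-1)(d-3)\,\psi_j(s,\lambda)^2}{4s^2(1-s^2)}\int_\rho^s \frac{dt}{\psi_j(t,\lambda)^2}.
\]

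The heart of the argument is now the kernel estimate $\int_\rho^1 |K_j(\rho,s,\lambda)|\,ds \lesssim \rho^{-1}(1-\rho)\langle\omega\rangle^{-1}$, uniformly on $[1/|a(\lambda)|, 1)$. The $\langle\omega\rangle^{-1}$ gain arises from the interplay of the $a(\lambda)^{\pm 1}$ factors hidden in $\psi_j^{\pm 2}$: writing $\psi_j(t,\lambda)^{-2} = a(\lambda)(1+t)^{-3/2+\lambda}(1-t)^{-1/2-\lambda}$ and carrying out the $t$-integration, the antiderivative contributes the prefactor $\tfrac{1}{1/2-\lambda} = \tfrac{i}{a(\lambda)}$ which cancels one $a(\lambda)$; the $1/a(\lambda)$ from $\psi_j(s,\lambda)^2$ then remains uncanceled and supplies the desired $\langle\omega\rangle^{-1}$. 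A case analysis (splitting according to whether $\rho$ is close to $1$ or to $1/|a(\lambda)|$, with the latter regime handled by integration by parts exploiting the oscillation $e^{2\lambda\varphi(t)}$ of $\psi_j^{-2}$), combined with the substitution $r = (1-s)/(1-\rho)$ to rescale the remaining power singularities, produces the stated kernel bound. With this in hand, the standard Volterra fixed-point argument in $C([\rho_0, 1))$ yields a unique $y_j$ with $y_j(\rho) = \O(\rho^{-1}(1-\rho)\langle\omega\rangle^{-1})$, and symbol-type bounds on the derivatives follow by differentiating the integral equation and iterating the same estimate. The main technical obstacle is precisely this kernel estimate: the naive approach of bounding each factor in absolute value loses the $\langle\omega\rangle^{-1}$ gain, and extracting it uniformly across the whole range $[1/|a(\lambda)|,1)$ requires careful tracking of the $\lambda$-dependence through every factor.
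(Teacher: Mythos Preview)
Your approach is essentially the same as the paper's, with some cosmetic differences in packaging. The paper uses variation of constants with \emph{both} comparison solutions $w_1=\psi_1$ and $w_2=\psi_2$ (Wronskian $2i$) rather than reduction of order with a single $\psi_j$; after dividing by $w_1$ this yields the Volterra kernel
\[
K(\rho,s,\lambda)=\frac{(d-1)(d-3)}{s^2(2\lambda-1)}\left[(1-s^2)-(1+s^2)\Bigl(\tfrac{1-\rho}{1+\rho}\Bigr)^{\frac12-\lambda}\Bigl(\tfrac{1-s}{1+s}\Bigr)^{\frac12+\lambda}\right]
\]
in closed form, with the $(2\lambda-1)^{-1}$ factor already explicit. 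Your inner integral $\int_\rho^s\psi_j^{-2}\,dt$ evaluates exactly (substitute $u=(1-t)/(1+t)$) and produces the very same expression, so the ``case analysis'' and ``integration by parts exploiting the oscillation'' you mention are not needed---once the antiderivative is in hand, the crude bound $|K|\lesssim s^{-2}|2\lambda-1|^{-1}$ on $[\,|a(\lambda)|^{-1},1)$ already gives $\int |K|\,ds\lesssim 1$, and the paper then obtains the extra $(1-\rho)$ factor by a one-line bootstrap (re-inserting $\widetilde w=1+O(\rho^{-1}\langle\omega\rangle^{-1})$ into the Volterra equation). Finally, the paper avoids your second Volterra iteration for $j=2$ entirely: Eq.~\eqref{Eq:hom near 1} is invariant under $\lambda\mapsto 1-\lambda$ and $\psi_1(\rho,1-\lambda)$ is a constant multiple of $\psi_2(\rho,\lambda)$, so one simply sets $h_2(\rho,\lambda):=h_1(\rho,1-\lambda)$.
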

\begin{proof}
We start by rewriting Eq.~\eqref{Eq:hom near 1} as
 \begin{align*}
v''(\rho)+\frac{3+4\lambda-4\lambda^2}{4(1-\rho^2)^2}v(\rho)=\frac{d^2+3-4d}{4\rho^2(1-\rho^2)}v(\rho)
 \end{align*}
and compute that the equation 
 \begin{align*}
w''(\rho)+\frac{3+4\lambda-4\lambda^2}{4(1-\rho^2)^2}w(\rho)=0
 \end{align*}
 has a fundamental system of solutions given by 
 \begin{align*}
 w_1(\rho,\lambda)&=\frac{(1+\rho)^{\frac{3}{4}-\frac{\lambda}{2}}(1-\rho)^{\frac{1}{4}+\frac{\lambda}{2}}}{\sqrt{a(\lambda)}}
 \\
 w_2(\rho,\lambda)&=w_1(\rho,1-\lambda)=\frac{(1+\rho)^{\frac{1}{4}+\frac{\lambda}{2}}(1-\rho)^{\frac{3}{4}-\frac{\lambda}{2}}}{\sqrt{a(\lambda)}}.
 \end{align*}
Performing a Volterra iteration will enable us to construct a solution to Eq. \eqref{Eq:hom near 1} out of these two functions. For this, we compute that the Wronskian of $w_1$ and $w_2$ is given by
$$
W(w_1(.,\lambda),w_2(.,\lambda))=2i
$$
and so, from the variation of constants formula, we derive the fixed point equation
\begin{align*}
w(\rho,\lambda)=&w_1(\rho,\lambda)+\int_{\rho}^{\rho_1}\frac{w_1(\rho,\lambda)w_2(s,\lambda)\widetilde{V}}{2is^2(1-s^2)}w(s,\lambda) ds
\\
&-\int_{\rho}^{\rho_1}\frac{w_2(\rho,\lambda)w_1(s,\lambda)\widetilde{V}}{2i s^2(1-s^2)}w(s,\lambda) ds
\end{align*}
for 
$$
\widetilde{V}:=d^2+3-4d,
$$
$\rho \in [\frac{1}{|a(\lambda)|},1)$, and with $\rho_1 \in [\frac{1}{|a(\lambda)|},1] $ to be chosen.
As $w_1(.,\lambda)$ is nonvanishing on $[\frac{1}{|a(\lambda)|},1)$ we can divide the whole equation by it. By introducing the variable 
$\widetilde{w}:=\frac{w}{w_1}$, we then obtain 
\begin{align}\label{volterra near 1}
\widetilde{w}(\rho,\lambda)&=1+\int_{\rho}^{\rho_1}\frac{w_1(s,\lambda)w_2(s,\lambda)\widetilde{V}}{2is^2(1-s^2)}\widetilde{w}(s,\lambda) ds
\\
&\quad-\int_{\rho}^{\rho_1}\frac{w_2(\rho,\lambda)w_1(s,\lambda)^2\widetilde{V}}{w_1(\rho,\lambda) 2i s^2(1-s^2)}\widetilde{w}(s,\lambda) ds \nonumber
\\
&=1+\int_{\rho}^{\rho_1}\frac{\left[(1-s^2)-(1+s^2)\left(\frac{1-\rho}{1+\rho}\right)^{\frac{1}{2}-\lambda}\left(\frac{1-s}{1+s}\right)^{\frac{1}{2}+\lambda}\right]\widetilde{V}}{s^2(2\lambda-1)}\widetilde{w}(s,\lambda) ds
\\
:&=1+\int_\rho^{\rho_1} K(\rho,s,\lambda)\widetilde{w}(s,\lambda) ds.\nonumber
\end{align}
We see that
\begin{align*}
\int_{\frac{1}{|a(\lambda)|}}^{\rho_1} \sup_{\frac{1}{|a(\lambda)|}\leq \rho <s}\left|K(\rho,s,\lambda)\right| ds\lesssim \int_{\frac{1}{|a(\lambda)|}}^{\rho_1}\frac{1}{s^2|1-2\lambda|} ds \lesssim 1
\end{align*}
and so, we can set $\rho_1=1$ and
use Lemma B.1 in \cite{DonSchSof11} to obtain that there exists a
unique solution $\widetilde w$ to Eq.~\eqref{volterra near 1} of the form 
$
\widetilde w(\rho,\lambda)=1+ O(\rho^{-1}\langle\omega\rangle^{-1}).
$
By re-inserting this expression into Eq.~\eqref{volterra near 1}, we
derive the improved representation 
$$\widetilde w(\rho,\lambda)=1+O(\rho^{-1}(1-\rho)\langle\omega\rangle^{-1}).
$$
To be completely precise, the $O$-term also depends on
$\varepsilon=\Re\lambda$ but this dependence is of no relevance to
us and, hence, suppressed in our notation.
To show that this solution is of symbol type, one can employ the diffeomorphism $\varphi$ and argue as in the proof of Lemma 3.3 in \cite{Don17}. Subsequently, we infer the existence of a solution to Eq. \eqref{Eq:hom near 1} given by
$$
h_1(\rho,\lambda):=w_1(\rho,\lambda)[1+\O(\rho^{-1}(1-\rho)\langle\omega\rangle^{-1})].
$$
The second solution $h_2$, is obtained by defining $h_2(\rho,\lambda):=h_1(\rho,1-\lambda)$.
\end{proof}
To continue, pick $r>0$ and $\rho_0 \in [0,1)$ such that $h_1$ and $h_2$ are non-vanishing on both the interval $[\frac{r}{|a(\lambda)|}, 1)$ and $[\rho_0,1)$. We now set $\rho_\lambda:=\min \{\rho_0,\frac{r}{|a(\lambda)|}\}$,
and observe that $h_1$ and $h_2$ do not vanish on $[\rho_\lambda,1)$. 

\begin{lem}
Let $\rho\in [\rho_\lambda,1)$. Then Eq.~\eqref{nofirstorder} has a fundamental system of the form
\begin{align*}
\psi_3(\rho,\lambda)&= h_1(\rho,\lambda)[1+\O(\rho^{0}(1-\rho)\langle\omega\rangle^{-1})]
 \\
 \psi_4(\rho,\lambda)&=h_2(\rho,\lambda)[1+\O(\rho^{0}(1-\rho)\langle\omega\rangle^{-1})].
\end{align*} 

\end{lem}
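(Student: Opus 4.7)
The plan is to construct $\psi_3$ and $\psi_4$ as perturbations of $h_1$ and $h_2$ via Volterra iterations, treating the term $V(\rho)v(\rho)/(1-\rho^2)$ on the right-hand side of Eq.~\eqref{nofirstorder} as a perturbation of the homogeneous equation \eqref{Eq:hom near 1}. Since \eqref{Eq:hom near 1} contains no first-order term, the Wronskian $W(h_1,h_2)$ is constant in $\rho$; evaluating it near $\rho=1$, where the $\O((1-\rho)\langle\omega\rangle^{-1})$ error in Lemma \ref{Lem:hom near 1} vanishes, one reads off $W(h_1(\cdot,\lambda),h_2(\cdot,\lambda))=W(w_1,w_2)=2i$.

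Variation of constants then yields the fixed-point equation
\begin{align*}
\psi_3(\rho,\lambda)=h_1(\rho,\lambda)+\frac{1}{2i}\int_\rho^1\bigl[h_1(\rho,\lambda)h_2(s,\lambda)-h_2(\rho,\lambda)h_1(s,\lambda)\bigr]\frac{V(s)}{1-s^2}\psi_3(s,\lambda)\,ds,
\end{align*}
where integrating from $\rho$ to $1$ makes the correction vanish at the endpoint. Dividing through by $h_1(\rho,\lambda)$ (nonvanishing on $[\rho_\lambda,1)$ by the choice of $\rho_\lambda$) and setting $\widetilde\psi_3:=\psi_3/h_1$ gives a Volterra equation $\widetilde\psi_3=1+\int_\rho^1 \widetilde K(\rho,s,\lambda)\widetilde\psi_3(s,\lambda)\,ds$ whose kernel, using the leading form of $h_1,h_2$ from Lemma \ref{Lem:hom near 1}, reads schematically
\begin{align*}
\widetilde K(\rho,s,\lambda)=\frac{V(s)}{2i\,a(\lambda)}\left[1-\left(\tfrac{(1-\rho)(1+s)}{(1+\rho)(1-s)}\right)^{\frac12-\lambda}\right]\bigl[1+\O(\rho^{-1}(1-\rho)\langle\omega\rangle^{-1})\bigr].
\end{align*}
The bracketed products $h_1(s)h_2(s)$ and $h_1(s)^2$ absorb the singular factor $1/(1-s^2)$ from the perturbation, while the normalizing $\sqrt{a(\lambda)}$ in $h_1,h_2$ produces the crucial small factor $\langle\omega\rangle^{-1}$.

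The main work is the kernel estimate: I would show $\int_{\rho_\lambda}^1\sup_{\rho'\in[\rho_\lambda,s]}|\widetilde K(\rho',s,\lambda)|\,ds\lesssim \langle\omega\rangle^{-1}$, exploiting the boundedness of $V$ on $[0,1]$, the bounded range $\Re\lambda\in[0,\tfrac14]\cup[\tfrac34,1]$, and the fact that $\rho\le s$ keeps the base of the power in the bracket comparable to $1$ modulo controllable boundary behavior. Lemma B.1 in \cite{DonSchSof11} then produces a unique solution $\widetilde\psi_3=1+\O(\langle\omega\rangle^{-1})$. Re-inserting this coarse bound into the integral equation and using that the kernel integrated over $[\rho,1]$ gains an additional factor of $(1-\rho)$ upgrades the error to $\O((1-\rho)\langle\omega\rangle^{-1})$, as claimed. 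Symbol-type control of this remainder is obtained by passing to the variable $\varphi(\rho)$ and differentiating under the integral sign exactly as in the proof of Lemma \ref{Lem:hom near 1} (cf.\ also Lemma 3.3 in \cite{Don17}). The construction of $\psi_4$ proceeds identically, starting from $h_2$ and using the symmetry $h_2(\rho,\lambda)=h_1(\rho,1-\lambda)$ to avoid repeating the argument.

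The main obstacle is the uniformity of the kernel estimate as $|\omega|\to\infty$, where $\rho_\lambda=r/|a(\lambda)|$ becomes small: one has to verify that the $\lambda$-dependent prefactor $\bigl((1-\rho)/(1-s)\bigr)^{1/2-\lambda}$ in $\widetilde K$ remains integrably bounded on the enlarged interval, while still retaining the $\langle\omega\rangle^{-1}$ gain coming from $1/a(\lambda)$. Once this uniform bound is in hand, everything else is routine Volterra bookkeeping.
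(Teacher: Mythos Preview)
Your proposal is correct and follows exactly the approach the paper intends: the paper's own proof is a single sentence instructing the reader to run a Volterra iteration based on $h_1,h_2$ (with the smooth potential $V(\rho)/(1-\rho^2)$ as the perturbation) and then invoke the $\lambda\mapsto 1-\lambda$ symmetry of Eq.~\eqref{nofirstorder} to obtain $\psi_4$, precisely as in Lemma~\ref{Lem:hom near 1}. Your write-up simply supplies the details that the paper omits, including the Wronskian $W(h_1,h_2)=2i$, the kernel bound yielding the $\langle\omega\rangle^{-1}$ gain, and the re-insertion step that upgrades the error to $\O((1-\rho)\langle\omega\rangle^{-1})$.
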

\begin{proof}
This follows by doing a Volterra iteration based on $h_1$ and $h_2$ and using the $\lambda$ symmetry of Eq.~\eqref{nofirstorder} as in the proof of Lemma \ref{Lem:hom near 1}.
\end{proof}
Before we can construct a fundamental system for Eq.~\eqref{nofirstorder} near $0$, we restrict $\Re \lambda=\varepsilon$ to the interval $[0,\frac{1}{4}]$ and define $\widehat{\rho}_\lambda:=\min \{\frac{1}{2}(1+\rho_0),\frac{2r}{|a(\lambda)|}\}$.
\begin{lem}\label{fundi near 0}
Let $\Re\lambda \in [0,\frac{1}{4}]$ and $\rho \in (0,\widehat{\rho}_\lambda]$. Then, Eq. \eqref{nofirstorder} has a fundamental system of the form
\begin{align*}
\psi_1(\rho,\lambda)=&b_1(\rho,\lambda)\left[1+\O(\rho^2\langle\omega\rangle^{0})\right] 
\\
=&\sqrt{(1-\rho^2)\varphi(\rho)}J_{\frac{d-2}{2}}( a(\lambda)\varphi(\rho))[1+\O(\rho^2\langle\omega\rangle^0)]
\\
\psi_2(\rho,\lambda)=& b_2(\rho,\lambda)[1+\O(\rho^2\langle\omega\rangle^0)]+\O(\rho^{\frac{7-d}{2}}\langle\omega\rangle^{1-\frac{d}{2}})
\\
=&\sqrt{(1-\rho^2)\varphi(\rho)}Y_\frac{d-2}{2}( a(\lambda)\varphi(\rho))[1+\O(\rho^2\langle\omega\rangle^0)]+ \O(\rho^{\frac{7-d}{2}}\langle\omega\rangle^{1-\frac{d}{2}}).
\end{align*}
\end{lem}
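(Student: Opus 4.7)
The plan is to treat Eq.~\eqref{nofirstorder} as a perturbation of the equation $v''+P_{\mathrm{base}}v=0$ having $b_1,b_2$ as a fundamental system, and to construct $\psi_1,\psi_2$ as fixed points of associated Volterra equations. Subtracting the two potentials puts Eq.~\eqref{nofirstorder} in the form $v''+P_{\mathrm{base}}v=Wv$ with
\[
W(\rho,\lambda)=-\frac{4d-d^2-3}{4\rho^2(1-\rho^2)}+\frac{4d-d^2-3}{4\varphi(\rho)^2(1-\rho^2)^2}+\frac{V(\rho)}{1-\rho^2};
\]
since $\varphi(\rho)=\rho+\tfrac{\rho^3}{3}+\cdots$, the two $\rho^{-2}$ singularities cancel and $W$ extends smoothly to $\rho=0$, is $\lambda$-independent, and is of symbol type on $[0,\widehat{\rho}_\lambda]$. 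Because Eq.~\eqref{nofirstorder} carries no first-order term, the Wronskian $c_W:=W(b_1,b_2)$ is constant in $\rho$, and the classical identity $W(J_\nu,Y_\nu)(x)=\tfrac{2}{\pi x}$ combined with the Liouville--Green factors yields $c_W=\tfrac{2}{\pi}$, independent of $\lambda$.

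For $\psi_1$ I would write variation of parameters from $0$,
\[
\psi_1(\rho)=b_1(\rho)+\frac{1}{c_W}\int_0^\rho \bigl[b_2(\rho)b_1(s)-b_1(\rho)b_2(s)\bigr]W(s)\psi_1(s)\,ds,
\]
divide by $b_1$ and iterate. Using the near-origin asymptotics \eqref{rep:BesselJ}--\eqref{rep:BesselYhalf} together with the ansatz $\psi_1\sim b_1\sim a(\lambda)^{(d-2)/2}\rho^{(d-1)/2}$, the two summands in the integrand are of orders $s^{d-1}$ and $s^1$ respectively, both integrable on $[0,\rho]$ with integrals of order $\rho^2$. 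Crucially, the $a(\lambda)^{\pm(d-2)/2}$ prefactors cancel pairwise, so the kernel is $\mathcal{O}(\langle\omega\rangle^0)$. A standard Volterra iteration (e.g.\ Lemma~B.1 of \cite{DonSchSof11}) then produces a unique bounded fixed point $\tilde\psi_1=\psi_1/b_1=1+\mathcal{O}(\rho^2\langle\omega\rangle^0)$; the symbol-type property is obtained via the diffeomorphism $\varphi$ exactly as in the proof of Lemma \ref{Lem:hom near 1}.

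The construction of $\psi_2$ is the main obstacle. Integrating both pieces of the variation of parameters formula from $0$ fails because, for $\psi_2\sim b_2$, the integral $\int_0^\rho b_2(s)^2 W(s)\,ds$ diverges like $\rho^{4-d}$ for $d\ge 5$ (logarithmically for $d=4$). To circumvent this I would split the two contributions and integrate them from different endpoints:
\[
\psi_2(\rho)=b_2(\rho)\Bigl[1+\frac{1}{c_W}\int_0^\rho b_1(s)W(s)\psi_2(s)\,ds\Bigr]+\frac{b_1(\rho)}{c_W}\int_\rho^{\widehat\rho_\lambda}b_2(s)W(s)\psi_2(s)\,ds,
\]
which still solves Eq.~\eqref{nofirstorder} as the auxiliary variation-of-parameters identity $b_2 A'+b_1 B'=0$ holds. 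Testing with the ansatz $\psi_2\sim b_2$, the first integrand is of order $s^1$, producing the leading piece $b_2[1+\mathcal{O}(\rho^2\langle\omega\rangle^0)]$; the second integrand is of order $s^{3-d}$, and integration from $\rho$ to $\widehat\rho_\lambda$ gives a factor of order $\rho^{4-d}$ (or $\log(1/\rho)$ when $d=4$), which multiplied by $b_1(\rho)\sim\rho^{(d-1)/2}$ yields exactly the stated $\mathcal{O}(\rho^{(7-d)/2})$ remainder. Tracking $a(\lambda)$-powers, the $b_2\cdot b_2$ in the divergent integrand carries $a^{-(d-2)}$ while the outer $b_1$ contributes only $a^{(d-2)/2}$, so the net prefactor is $a^{-(d-2)/2}\sim\langle\omega\rangle^{-(d-2)/2}=\langle\omega\rangle^{1-d/2}$, matching the claim. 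The resulting Volterra equation admits a unique solution by the same abstract lemma, and the symbol type again follows from the $\varphi$-argument; the integer versus half-integer cases of $Y_\nu$ enter only through constants, the logarithmic factor at $d=4$ being harmlessly absorbed into the $\mathcal{O}(\rho^{(7-d)/2})$ error.
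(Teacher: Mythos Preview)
Your construction of $\psi_1$ is essentially identical to the paper's: variation of parameters based on $b_1,b_2$, divide by $b_1$ (which is legitimate because $J_\nu$ has no nonreal zeros for $\nu>-1$), and iterate; the paper does precisely this.

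For $\psi_2$, however, you take a genuinely different route and there is a gap. Your equation
\[
\psi_2(\rho)=b_2(\rho)\Bigl[1+\tfrac{1}{c_W}\int_0^\rho b_1 W\psi_2\,ds\Bigr]+\tfrac{b_1(\rho)}{c_W}\int_\rho^{\widehat\rho_\lambda} b_2 W\psi_2\,ds
\]
is \emph{not} of Volterra type: the two integrals run in opposite directions from $\rho$, so the iterated kernels do not pick up the factorial decay that drives Lemma~B.1 of \cite{DonSchSof11}. For such a mixed (Fredholm-type) equation you would need a genuine smallness condition, say $\|K\|<1$ in a suitable weighted $L^\infty$ norm. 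Your own size count gives $\|K\|\lesssim C_V\,\widehat\rho_\lambda^{\,2}$, which is fine for $|\omega|$ large (then $\widehat\rho_\lambda\sim\langle\omega\rangle^{-1}$) but for bounded $\omega$ one has $\widehat\rho_\lambda=\tfrac12(1+\rho_0)$, a fixed constant, and there is no reason the product with the $V$-dependent constant should be below $1$. A second obstruction is that rewriting the equation via $\psi_2/b_2$ is not available either, since $Y_{(d-2)/2}$ need not be zero-free in the relevant sector of the upper half-plane.

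The paper sidesteps both issues by \emph{not} setting up a second Volterra problem at all. It instead builds $\psi_2$ from the already-constructed $\psi_1$ by reduction of order: writing $b_2=c_1(\lambda)b_1+c_2\,b_1\!\int_\rho^{\widetilde\rho_\lambda}b_1^{-2}\,ds$ with explicit Wronskian-determined constants $c_1(\lambda)=\mathcal O(\langle\omega\rangle^0)$, $c_2=-\tfrac{2}{\pi}$, it \emph{defines}
\[
\psi_2:=c_1(\lambda)\psi_1+c_2\,\psi_1\!\int_\rho^{\widetilde\rho_\lambda}\psi_1^{-2}\,ds
\]
and then computes
\[
\psi_2-b_2[1+\mathcal O(\rho^2\langle\omega\rangle^0)]
= c_2\,\psi_1\!\int_\rho^{\widetilde\rho_\lambda}\bigl[\psi_1^{-2}-b_1^{-2}\bigr]ds
= \mathcal O(\rho^{(7-d)/2}\langle\omega\rangle^{1-d/2}),
\]
using only the already-established expansion of $\psi_1$ and the nonvanishing of $b_1$. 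No fixed-point argument is needed, and the $\omega$-powers fall out directly from $b_1^{-2}=\mathcal O(\rho^{1-d}\langle\omega\rangle^{2-d})$. If you want to keep your integral-equation viewpoint, you would have to supply a separate contraction argument for bounded $\omega$; the cleanest fix is to follow the paper and obtain $\psi_2$ by reduction of order from $\psi_1$.
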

\begin{proof}
Recall that $b_1$ and $b_2$ are two linearly independent solutions to the equation
\begin{align*}
v''(\rho)-\left(\frac{1-4\lambda+4\lambda^2}{4(1-\rho^2)}-\frac{4d-d^2-3}{4\varphi(\rho)^2(1-\rho^2)^2}+Q_{\varphi}(\rho)\right)v(\rho)=0.
\end{align*}
Let now $$\widetilde{V}(\rho)=\left(-\frac{4d-d^2-3}{4\rho^2(1-\rho^2)}+\frac{4d-d^2-3}{4\varphi(\rho)^2(1-\rho^2)^2}+\frac{V(\rho)}{1-\rho^2}\right)$$
and note that a Taylor expansion shows $\widetilde{V}\in C^\infty([0,\widehat{\rho}_\lambda])$.
Furthermore, we have $$W(b_1(.,\lambda),b_2(.,\lambda))=\frac{2}{\pi},$$
thus, in order to find solutions of Eq. \eqref{nofirstorder} on $(0,\widehat{\rho}_\lambda)$ we make the ansatz
\begin{align}\label{Ansatz1}
b(\rho,\lambda)= b_1(\rho,\lambda)&-\frac{\pi}{2}b_1(\rho,\lambda)\int_{0}^{\rho} b_2(s,\lambda)\widetilde{V}(s)b(s,\lambda)d s\\
&+\frac{\pi}{2}b_2(\rho,\lambda)\int_{0}^{\rho} b_1(s,\lambda)\widetilde{V}(s,\lambda)b(s,\lambda) d s. \nonumber
\end{align}
We now record some useful facts, the first being that, as $\varphi(\rho)\sim \rho$ as $\rho \to 0 $, the condition $\rho \in (0,\widehat{\rho}_\lambda]$ implies that $a(\lambda)\varphi(\rho)$ is contained in the set $\{z\in \C: \Im(z)>0, 0<|z|\leq R\}$ for a sufficiently large $R>0$. Thus, the identities \eqref{rep:BesselJ}, \eqref{rep:BesselYint}, and \eqref{rep:BesselYhalf} hold. Secondly, since we are a definite distance away from the endpoint $\rho=1$ one can directly check the identity 
$$
\varphi(\rho)=\O(\rho)
$$
for $\rho \in (0,\widehat{\rho}_\lambda)$.
Lastly, as $\Re a(\lambda)\neq 0$ for all $\lambda\in \C$ with $\Re\lambda\in [0,\frac{1}{4}]$, $b_1$ is nonvanishing on $(0,\widehat{\rho}_\lambda]$, as all zeros of $J_\nu$ are real provided that $\nu>-1$ (see \cite{Olv97}, p. 244 Theorem 6.2). This allows us to divide the whole equation~ \eqref{Ansatz1} by $b_1$, which, upon introducing the help variable $\widetilde{b}=\frac{b}{b_1}$,
yields the integral equation
\begin{equation}\label{int0}
\widetilde{b}(\rho,\lambda)=1+\int_{0}^{\rho}K(\rho,s,\lambda)\widetilde{b}(s,\lambda) ds,
\end{equation}
with 
$$
K(\rho,s,\lambda)=\frac{\pi\widetilde{V}(s)}{2}\left( \frac{b_2(\rho,\lambda)}{b_1(\rho,\lambda)}b_1(s,\lambda)^2-b_2(s,\lambda)b_1(s,\lambda)\right).
$$
From \eqref{rep:BesselJ}, \eqref{rep:BesselYint}, and \eqref{rep:BesselYhalf}, we infer that 
$$
b_2(s,\lambda)b_1(s,\lambda)=\O(s \langle\omega\rangle^0) 
$$
and 
$$
\frac{b_2(\rho,\lambda)}{b_1(\rho,\lambda)}b_1(s,\lambda)^2=\O(\rho^{2-d}s^{d-1}\langle\omega\rangle^0)
$$
for $0<s\leq \rho<\widehat{\rho}_\lambda$.
Hence,
$$
\int_0^{\widehat{\rho}_\lambda} \sup_{\rho\in [s,\widehat\rho_\lambda]}|K(\rho,s,\lambda)|ds \lesssim \langle\omega\rangle^{-2}.
$$  Consequently, there exists a solution $\widetilde{b}$ to  Eq. \eqref{int0} that satisfies
\begin{align*}
\widetilde{b}(\rho,\lambda)=1+O(\rho^2\langle\omega\rangle^0).
\end{align*}
However, as all involved terms are of symbol form, we immediately conclude that
\begin{align*}
\widetilde{b}(\rho,\lambda)=1+\O(\rho^2\langle\omega\rangle^0).
\end{align*}
This in turn implies the existence of a solution of Eq. \eqref{nofirstorder} which takes the form
\begin{align*}
\psi_1(\rho,\lambda)&= \sqrt{(1-\rho^2)\varphi(\rho)}J_{\frac{d-2}{2}}( a(\lambda)\varphi(\rho))[1+\O(\rho^2\langle\omega\rangle^0)].
\end{align*}
To construct the second solution stated in the lemma, we pick a $\rho_1 \in (0,1]$ such that $\psi_1$ does not vanish for $\rho\leq\min\{\rho_1,\widehat{\rho}_\lambda\}=:\widetilde{\rho}_\lambda$ for any $0\leq \Re\lambda\leq \frac{1}{4}.$ Next, as
$\widetilde{b}_1(\rho,\lambda):=b_1(\rho,\lambda)\int_{\rho}^{\widetilde{\rho}_\lambda} b_1(s,\lambda)^{-2} ds$  also solves Eq.~\eqref{beforebessel}, there exist constants $c_1(\lambda),c_2(\lambda)\in \C$ such that
\begin{align*}
b_2(\rho,\lambda)=c_1(\lambda) b_1(\rho,\lambda)+ c_2(\lambda)\widetilde{b}_1(\rho,\lambda).
\end{align*}
Moreover, we have the explicit formula
\begin{align*}
c_1(\lambda)&=\frac{W(b_2(.,\lambda),\widetilde{b}_1(.,\lambda))}{W(b_1(.,\lambda),\widetilde{b}_1(.,\lambda))}\\
c_2(\lambda)&=-\frac{W(b_2(.,\lambda),b_1(.,\lambda))}{W(b_1(.,\lambda),\widetilde{b}_1(.,\lambda))}.
\end{align*}
Using that
$W(b_2(.,\lambda),b_1(.,\lambda))=-\frac{2}{\pi}$ and $ W(b_1(.,\lambda),\widetilde{b}_1(.,\lambda))=-1$, we infer that
$c_2=-\frac{2}{\pi}$ and $c_1(\lambda)=-W(b_2(.,\lambda),\widetilde{b_1}(.,\lambda))$.
Evaluating $W(b_2(.,\lambda),\widetilde{b}_1(.,\lambda))$ at $\widetilde{\rho}_\lambda$ yields
\[
W(b_2(.,\lambda),\widetilde{b_1}(.,\lambda))=-b_2(\widetilde{\rho}_\lambda,\lambda)b_1(\widetilde{\rho}_\lambda,\lambda)^{-1}=\O(\langle\omega\rangle^{0}).
\]
Keeping these facts in mind, we now turn our attention to $\psi_2$ and remark that a second solution of Eq.~\eqref{nofirstorder} is given by
$\widetilde{\psi}_1(\rho,\lambda)=\psi_1(\rho,\lambda)\int_{\rho}^{\widetilde{\rho}_\lambda} \psi_1(s,\lambda)^{-2} ds$.
Considering this, we calculate
\begin{align*}
\psi_2(\rho,\lambda):&=c_1(\lambda)\psi_1(\rho,\lambda)+ c_2\psi_1(\rho,\lambda)\int_{\rho}^{\widetilde{\rho}_\lambda} \psi_1(s,\lambda)^{-2}ds 
\\
&=c_1(\lambda)\psi_1(\rho,\lambda)+
   c_2\psi_1(\rho,\lambda)\int_{\rho}^{\widetilde{\rho}_\lambda}
   b_1(s,\lambda)^{-2}ds \\
&\quad +c_2\psi_1(\rho,\lambda)\int_{\rho}^{\widetilde{\rho}_\lambda}\left [
   \psi_1(s,\lambda)^{-2}-b_1(s,\lambda)^{-2}\right ] ds
\\
&=b_2(\rho,\lambda)[1+\O(\rho^2\langle\omega\rangle^0)]
+c_2\psi_1(\rho,\lambda)\int_{\rho}^{\widetilde{\rho}_\lambda} \frac{\O(s^2\langle\omega\rangle^0)}{b_1(s,\lambda)^2[1+\O(s^2\langle\omega \rangle^0)]^2} ds.
\end{align*}
From $b_1(\rho,\lambda)^{-2}=\O(\rho^{1-d}\langle\omega\rangle^{2-d})$,
we obtain
 \[
\int_{\rho}^{\widetilde{\rho}_\lambda} \frac{\O(s^2\langle\omega\rangle^0)}{b_1(s,\lambda)^2[1+\O(s^2\langle\omega \rangle^0)]^2} ds=\O(\rho^0\langle\omega\rangle^{-2})+\O(\rho^{4-d}\langle\omega\rangle^{2-d})=\O(\rho^{4-d}\langle\omega\rangle^{2-d})
\]
and so, given that
$\psi_1(\rho,\lambda)=\O(\rho^{\frac{d-1}{2}}\langle\omega\rangle^{\frac{d-2}{2}})$, we see that $\psi_2$ is of the claimed form for $\rho\in (0,\widetilde{\rho}_\lambda)$.
However, since for $|\lambda|$ large enough we have that
$\widetilde{\rho}_\lambda=\widehat{\rho}_\lambda$, we can safely
assume that $\widetilde{\rho}_\lambda=\widehat{\rho}_\lambda$.
\end{proof}
Our next step will be to patch our solutions together.
\begin{lem}\label{connectioncoef}
On $[\rho_\lambda,\widehat{\rho}_\lambda]$ the solutions $\psi_3$ and $\psi_4$ have the representations

\begin{align*}
\psi_3(\rho,\lambda) &= c_{1,3}(\lambda)\psi_1(\rho,\lambda)+ c_{2,3}(\lambda)\psi_2(\rho,\lambda)\\
\psi_4(\rho,\lambda) &= c_{1,4}(\lambda)\psi_1(\rho,\lambda)+ c_{2,4}(\lambda)\psi_2(\rho,\lambda),
\end{align*}
with
\begin{align*}
	c_{1,3}(\lambda)&=\frac{\pi}{2}W(h_1(.,\lambda),b_2(.,\lambda))(\rho_\lambda)+\O(\langle\omega\rangle^{-1})=\O(\langle\omega\rangle^{0})
\\
c_{2,3}(\lambda)&= -\frac{\pi}{2}W(h_1(.,\lambda),b_1(.,\lambda))(\rho_\lambda)+\O(\langle\omega\rangle^{-1})=\O(\langle\omega\rangle^{0})
\end{align*}
and
\begin{align*}
	c_{1,4}(\lambda)&= \frac{\pi}{2}W(h_2(.,\lambda),b_2(.,\lambda))(\rho_\lambda)+\O(\langle\omega\rangle^{-1})=\O(\langle\omega\rangle^{0})
	\\
	c_{2,4}(\lambda)&=-\frac{\pi}{2}W(h_2(.,\lambda),b_1(.,\lambda))(\rho_\lambda)+\O(\langle\omega\rangle^{-1})=\O(\langle\omega\rangle^{0}).
\end{align*}
\end{lem}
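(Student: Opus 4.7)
The plan is to identify the connection coefficients by inverting a Cramer's rule type system. Both $\{\psi_1,\psi_2\}$ and $\{\psi_3,\psi_4\}$ are fundamental systems of Eq.~\eqref{nofirstorder} on the overlap interval $[\rho_\lambda,\widehat{\rho}_\lambda]$, so on this interval we can write
\[
\psi_3(\rho,\lambda)=c_{1,3}(\lambda)\psi_1(\rho,\lambda)+c_{2,3}(\lambda)\psi_2(\rho,\lambda),
\]
and similarly for $\psi_4$. Because Eq.~\eqref{nofirstorder} has no first order term, the Wronskian of any pair of solutions is constant in $\rho$, and Cramer's rule gives
\[
c_{1,3}(\lambda)=\frac{W(\psi_3,\psi_2)}{W(\psi_1,\psi_2)},\qquad c_{2,3}(\lambda)=-\frac{W(\psi_3,\psi_1)}{W(\psi_1,\psi_2)},
\]
and analogously for $c_{1,4},c_{2,4}$ with $\psi_3$ replaced by $\psi_4$.

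First I would compute $W(\psi_1,\psi_2)$. Since this Wronskian is constant in $\rho$, I evaluate it in the limit $\rho\to0^+$, where the perturbative correction $\mathcal{O}(\rho^2\langle\omega\rangle^0)$ in $\psi_1$, the analogous correction in $\psi_2$, and the additional $\mathcal{O}(\rho^{(7-d)/2}\langle\omega\rangle^{1-d/2})$ term in $\psi_2$ all vanish faster than the contributions of $b_1,b_2$. Using the identity $W(b_1(.,\lambda),b_2(.,\lambda))=\frac{2}{\pi}$ recorded in the proof of Lemma~\ref{fundi near 0}, I conclude $W(\psi_1,\psi_2)=\frac{2}{\pi}$. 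This produces the prefactor $\frac{\pi}{2}$ in the claimed expressions.

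Next I evaluate the numerators at $\rho=\rho_\lambda$. Inserting the representations $\psi_3=h_1[1+\mathcal{O}(\rho^{0}(1-\rho)\langle\omega\rangle^{-1})]$, $\psi_1=b_1[1+\mathcal{O}(\rho^2\langle\omega\rangle^0)]$, and $\psi_2=b_2[1+\mathcal{O}(\rho^2\langle\omega\rangle^0)]+\mathcal{O}(\rho^{(7-d)/2}\langle\omega\rangle^{1-d/2})$, together with the fact that all error terms are of symbol type so their derivatives stay within the expected class, the leading parts of the Wronskians are $W(h_1,b_2)(\rho_\lambda)$ and $W(h_1,b_1)(\rho_\lambda)$. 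Multiplying the error terms by the appropriate Bessel factors and evaluating at $\rho_\lambda$ yields corrections of relative size $\langle\omega\rangle^{-1}$, which gives the stated $\mathcal{O}(\langle\omega\rangle^{-1})$ remainders. The same reasoning applied to $\psi_4=h_2[1+\mathcal{O}(\ldots)]$ produces the formulas for $c_{1,4}$ and $c_{2,4}$.

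It remains to show the resulting main terms are $\mathcal{O}(\langle\omega\rangle^{0})$. For this I distinguish the two regimes in $\rho_\lambda=\min\{\rho_0,r/|a(\lambda)|\}$. When $\rho_\lambda=\rho_0$ (bounded $\omega$) everything is uniformly bounded. When $\rho_\lambda=r/|a(\lambda)|$ (large $|\omega|$), the argument $a(\lambda)\varphi(\rho_\lambda)$ lies in the fixed compact set $\{|z|\leq r\}$, so $J_{(d-2)/2}$ and $Y_{(d-2)/2}$ together with their derivatives are $\mathcal{O}(1)$; the prefactor $\sqrt{(1-\rho^2)\varphi(\rho)}$ contributes a factor $|a(\lambda)|^{-1/2}$ and differentiating in $\rho$ costs an extra factor $|a(\lambda)|$ (from chain rule through $\varphi(\rho_\lambda)$). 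Simultaneously $h_j(\rho_\lambda,\lambda)$ carries the factor $|a(\lambda)|^{-1/2}$ and its derivative again costs $|a(\lambda)|$ (this time from differentiating $(1\pm\rho)^{\pm\lambda/2}$). The two factors of $|a(\lambda)|^{-1/2}$ cancel the single factor of $|a(\lambda)|$ produced by whichever function is differentiated, leaving $\mathcal{O}(\langle\omega\rangle^{0})$. The main obstacle in a rigorous write-up is precisely this careful tracking of how the $\lambda$-dependence is distributed between function and derivative in each Wronskian; once those bookkeeping estimates are verified the lemma follows.
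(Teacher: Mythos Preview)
Your approach is essentially identical to the paper's: Cramer's rule for the connection coefficients, evaluation of $W(\psi_1,\psi_2)$ at $\rho\to 0$ to recover $\frac{2}{\pi}$, and evaluation of the numerator Wronskians at $\rho_\lambda$ where the multiplicative error factors in $\psi_j$ produce $\O(\langle\omega\rangle^{-1})$ corrections. One small slip: in the large-$|\omega|$ regime you say $a(\lambda)\varphi(\rho_\lambda)$ lies in $\{|z|\le r\}$, but since $\varphi(\rho)\sim\rho$ this argument has modulus $\approx r$ and so sits in a fixed annulus bounded away from $0$---this is what you actually need, as $Y_{(d-2)/2}$ is singular at the origin; with that correction your bookkeeping for the $\O(\langle\omega\rangle^{0})$ bound (which the paper leaves largely to the reader) goes through.
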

\begin{proof}
We know the explicit representations
\begin{align*}
c_{1,3}(\lambda)&=\frac{W(\psi_3(.,\lambda),\psi_2(.,\lambda))}{W(\psi_1(.,\lambda),\psi_2(.,\lambda))}\\
c_{2,3}(\lambda)&=-\frac{W(\psi_3(.,\lambda),\psi_1(.,\lambda))}{W(\psi_{1}(.,\lambda),\psi_{2}(.,\lambda))}
\end{align*}
and so computing the connection coefficents reduces to calculating these Wronskians.
Evaluating $W(\psi_1(.,\lambda),\psi_2(.,\lambda))(\rho)$ at $\rho=0$ yields
$$
W(\psi_{1}(.,\lambda),\psi_{2}(.,\lambda))=W(b_1(.,\lambda),b_2(.,\lambda))=\frac{2}{\pi}
$$
while an evaluation at $\rho_\lambda$ yields
\begin{align*}
 W(\psi_3(.,\lambda),\psi_2(.,\lambda))&= W(
    h_1(.,\lambda), b_2(.,\lambda))(\rho_\lambda) [1+\O(\langle
    \omega\rangle^{-1})] \\
  &\quad +h_1(\rho_\lambda,\lambda)b_2(\rho_\lambda,\lambda)\O(\langle\omega\rangle^{0})+\O(\langle\omega\rangle^{-2}) \\
&=W(h_1(.,\lambda),b_2(.,\lambda))(\rho_\lambda)+\O(\langle\omega\rangle^{-1}).
\end{align*}
Hence, $$c_{1,3}(\lambda)= \frac{\pi}{2}W(h_1(.,\lambda),b_2(.,\lambda))(\rho_\lambda)+\O(\langle\omega\rangle^{-1})$$
and analogously, one computes the remaining coefficients.
\end{proof}
Analogously we can patch together the solutions of the free equation. To this end, let $\psi_{\mathrm{f}_1}$ and $\psi_{\mathrm{f}_2}$ be the solutions obtained from Lemma \ref{fundi near 0} in the case $V=0$ and, for notational convenience, let $h_1=\psi_{\mathrm{f}_3}$ and $h_2=\psi_{\mathrm{f}_4}$.
\begin{lem}\label{connectioncoeffree}
On $[\rho_\lambda,\widehat{\rho}_\lambda]$ the solutions $\psi_{\mathrm{f}_3}$ and $\psi_{\mathrm{f}_4}$ have the representations
\begin{align*}
\psi_{\mathrm{f}_3}(\rho,\lambda) &= c_{\mathrm{f}_{1,3}}(\lambda)\psi_{\mathrm{f}_1}(\rho,\lambda)+ c_{\mathrm{f}_{2,3}}(\lambda)\psi_{\mathrm{f}_2}(\rho,\lambda)
\\
\psi_{\mathrm{f}_4}(\rho,\lambda) &= c_{\mathrm{f}_{1,4}}(\lambda)\psi_{\mathrm{f}_1}(\rho,\lambda)+ c_{\mathrm{f}_{2,4}}(\lambda)\psi_{\mathrm{f}_2}(\rho,\lambda),
\end{align*}
with
\begin{align*}
	c_{\mathrm{f}_{1,3}}(\lambda)&=\frac{\pi}{2}W(h_1(.,\lambda),b_2(.,\lambda))(\rho_\lambda)+\O(\langle\omega\rangle^{-1})=\O(\langle\omega\rangle^{0})
\\
c_{\mathrm{f}_{2,3}}(\lambda)&= -\frac{\pi}{2}W(h_1(.,\lambda),b_1(.,\lambda))(\rho_\lambda)+\O(\langle\omega\rangle^{-1})=\O(\langle\omega\rangle^{0})
\end{align*}
and
\begin{align*}
	c_{\mathrm{f}_{1,4}}(\lambda)&= \frac{\pi}{2}W(h_2(.,\lambda),b_2(.,\lambda))(\rho_\lambda)+\O(\langle\omega\rangle^{-1})=\O(\langle\omega\rangle^{0})
	\\
	c_{\mathrm{f}_{2,4}}(\lambda)&=-\frac{\pi}{2}W(h_2(.,\lambda),b_1(.,\lambda))(\rho_\lambda)+\O(\langle\omega\rangle^{-1})=\O(\langle\omega\rangle^{0}).
\end{align*}
\end{lem}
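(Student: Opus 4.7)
The plan is to mimic the proof of Lemma \ref{connectioncoef} verbatim. The point is that the free equation ($V=0$) is just a particular instance of Eq.~\eqref{nofirstorder}, so all the building blocks (the near-$0$ fundamental system from Lemma \ref{fundi near 0} and the near-$1$ fundamental system from Lemma \ref{Lem:hom near 1}) are already available. In fact, when $V=0$ the equation \eqref{nofirstorder} reduces to \eqref{Eq:hom near 1}, and therefore the Volterra iteration producing $\psi_3,\psi_4$ out of $h_1,h_2$ becomes trivial; we simply have $\psi_{\mathrm{f}_3}=h_1$ and $\psi_{\mathrm{f}_4}=h_2$ exactly.

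First I would write the connection coefficients as quotients of Wronskians via Cramer's rule,
\begin{align*}
c_{\mathrm{f}_{1,3}}(\lambda)&=\frac{W(\psi_{\mathrm{f}_3}(.,\lambda),\psi_{\mathrm{f}_2}(.,\lambda))}{W(\psi_{\mathrm{f}_1}(.,\lambda),\psi_{\mathrm{f}_2}(.,\lambda))},\qquad
c_{\mathrm{f}_{2,3}}(\lambda)=-\frac{W(\psi_{\mathrm{f}_3}(.,\lambda),\psi_{\mathrm{f}_1}(.,\lambda))}{W(\psi_{\mathrm{f}_1}(.,\lambda),\psi_{\mathrm{f}_2}(.,\lambda))},
\end{align*}
and analogously for the pair attached to $\psi_{\mathrm{f}_4}$. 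Since Wronskians of solutions of the same linear equation are constant in $\rho$, each of these Wronskians may be evaluated at whichever endpoint of $[\rho_\lambda,\widehat\rho_\lambda]$ is most convenient.

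Next I would compute the denominator by taking $\rho\to0$. The representation from Lemma \ref{fundi near 0} (applied with $V=0$) shows that $\psi_{\mathrm{f}_1}=b_1[1+\O(\rho^2\langle\omega\rangle^0)]$ and $\psi_{\mathrm{f}_2}=b_2[1+\O(\rho^2\langle\omega\rangle^0)]+\O(\rho^{\frac{7-d}{2}}\langle\omega\rangle^{1-d/2})$; the error factors disappear in the limit, so
\begin{align*}
W(\psi_{\mathrm{f}_1}(.,\lambda),\psi_{\mathrm{f}_2}(.,\lambda))=W(b_1(.,\lambda),b_2(.,\lambda))=\tfrac{2}{\pi}.
\end{align*}
For the numerators I would evaluate at $\rho=\rho_\lambda$. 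Using $\psi_{\mathrm{f}_3}=h_1$ together with the near-$0$ expansion of $\psi_{\mathrm{f}_2}$, the Leibniz rule produces the leading term $W(h_1(.,\lambda),b_2(.,\lambda))(\rho_\lambda)$ plus cross terms of the form $h_1(\rho_\lambda,\lambda)\,b_2(\rho_\lambda,\lambda)\,\O(\langle\omega\rangle^0)$ and tails coming from the $\O(\rho^{\frac{7-d}{2}}\langle\omega\rangle^{1-d/2})$ piece; the symbol bookkeeping done in the proof of Lemma \ref{connectioncoef} (plugging in the size $|h_1(\rho_\lambda,\lambda)|\lesssim\langle\omega\rangle^{-1/2}$ together with the known sizes of $b_1,b_2$ at $\rho_\lambda\sim|a(\lambda)|^{-1}$) controls these cross terms by $\O(\langle\omega\rangle^{-1})$. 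Hence
\begin{align*}
W(\psi_{\mathrm{f}_3}(.,\lambda),\psi_{\mathrm{f}_2}(.,\lambda))=W(h_1(.,\lambda),b_2(.,\lambda))(\rho_\lambda)+\O(\langle\omega\rangle^{-1}),
\end{align*}
and the stated formula for $c_{\mathrm{f}_{1,3}}$ follows after multiplying by $\pi/2$. The remaining three coefficients are obtained by the same argument with the roles of $b_1\leftrightarrow b_2$ and $h_1\leftrightarrow h_2$ interchanged.

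There is no genuine obstacle here: because $V=0$ erases the perturbative factor that distinguished $\psi_3,\psi_4$ from $h_1,h_2$ in Lemma \ref{connectioncoef}, the only thing to check is that the asymptotic bookkeeping from that proof still applies, which it does line by line. The only point that merits care is keeping track of the symbol sizes of $h_1(\rho_\lambda,\lambda)$ and $b_1(\rho_\lambda,\lambda),b_2(\rho_\lambda,\lambda)$ so that the error terms land in $\O(\langle\omega\rangle^{-1})$; but this is exactly the estimate carried out in the proof of Lemma \ref{connectioncoef}, so I would simply invoke it.
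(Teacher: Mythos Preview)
Your proposal is correct and follows exactly the approach the paper intends: the paper does not give a separate proof of this lemma but merely prefaces it with ``Analogously we can patch together the solutions of the free equation,'' and your argument is precisely that analogy spelled out. Your observation that $\psi_{\mathrm{f}_3}=h_1$ and $\psi_{\mathrm{f}_4}=h_2$ exactly (since $V=0$ makes Eq.~\eqref{nofirstorder} coincide with Eq.~\eqref{Eq:hom near 1}) is the paper's own convention, stated just before the lemma.
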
                                                      
Now, let $\chi: [0,1]\times \{z\in \C:0\leq \Re z\leq \frac{1}{4}\} \to
[0,1]$, $\chi_\lambda(\rho):=\chi(\rho,\lambda)$, be a smooth cutoff function that satisfies
$\chi_\lambda(\rho)=1$ for $\rho \in [0,\rho_\lambda]$, $
\chi_\lambda(\rho)=0$ for $\rho \in [\widehat{\rho}_\lambda,1] $, and
$|\partial_\rho^k\partial_\omega^\ell \chi_\lambda(\rho)|\leq
C_{k,\ell}\langle\omega\rangle^{k-\ell}$ for $k,\ell\in\mathbb N_0$.
We then obtain two global solutions to Eq.~\eqref{nofirstorder} by setting
\begin{align*}
v_1(\rho,\lambda):=&\chi_\lambda(\rho)[c_{1,3}(\lambda)\psi_1(\rho,\lambda)+c_{2,3}(\lambda)\psi_2(\rho,\lambda)]
+\left(1-\chi_\lambda(\rho)\right)\psi_3(\rho,\lambda)\\
v_2(\rho,\lambda):=& \chi_\lambda(\rho)[c_{1,4}(\lambda)\psi_1(\rho,\lambda)+c_{2,4}(\lambda)\psi_2(\rho,\lambda)]
+\left(1-\chi_\lambda(\rho)\right)\psi_4(\rho,\lambda).
\end{align*}
An evaluation at $\rho=1$ yields 
$$
W(v_1,v_2)=W(\psi_3,\psi_4)=2i.
$$
\subsection{Original equation}
We now return to our original equation
and our specified potential. By undoing our transformations, we obtain a fundamental system of equation \eqref{generalised spectral eq} which is given by
\begin{align*}
u_1(\rho,\lambda)=&\rho^{\frac{1-d}{2}}(1-\rho^2)^{-\frac{1}{4}-\frac{\lambda}{2}}v_1(\rho,\lambda)
\\
u_2(\rho,\lambda)=&\rho^{\frac{1-d}{2}}(1-\rho^2)^{-\frac{1}{4}-\frac{\lambda}{2}}v_2(\rho,\lambda).
\end{align*}
Observe, that neither $u_1$ nor $u_2$ remains bounded bounded at $\rho=0$. To construct a solution which is well behaved near $0$ we will need the following Lemma.
\begin{lem}\label{lem:roots of free c23}
We have $c_{2,3}(\lambda)\neq 0$ and $c_{\mathrm{f}_{2,3}}(\lambda)\neq 0$ for all $\lambda \in \C$ with $\Re \lambda \in  [0,\frac{1}{4}]$. 
\end{lem}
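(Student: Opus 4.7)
My approach is by contradiction, exploiting that a vanishing of either connection coefficient would produce a genuine $H^1$ eigenfunction of $\Lf$ (respectively $\Lf_0$), contradicting the spectral information already at our disposal. Suppose $c_{2,3}(\lambda_0)=0$ for some $\lambda_0$ with $\Re\lambda_0\in[0,\tfrac14]$. By Lemma~\ref{connectioncoef} this forces $\psi_3(\cdot,\lambda_0)=c_{1,3}(\lambda_0)\psi_1(\cdot,\lambda_0)$ on the overlap $[\rho_{\lambda_0},\widehat\rho_{\lambda_0}]$, so $\psi_1$ and $\psi_3$ patch together into a single nontrivial global solution $\psi_\star$ of Eq.~\eqref{nofirstorder} which behaves like $\psi_1$ near $0$ and like $h_1$ near $1$. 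Undoing the Liouville substitution, the candidate eigenfunction is
\[
u_\star(\rho):=\rho^{(1-d)/2}(1-\rho^2)^{-1/4-\lambda_0/2}\psi_\star(\rho),
\]
which solves the homogeneous version of Eq.~\eqref{generalised spectral eq} for the potential $V\equiv\tfrac{2d+d^2}{4}$.

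The heart of the argument is to verify that $u_\star\in H^1(\B^d_1)$. Near $\rho=0$, the Bessel expansion~\eqref{rep:BesselJ} together with $\varphi(\rho)\sim\rho$ and Lemma~\ref{fundi near 0} yields $\psi_1(\rho,\lambda_0)=C\rho^{(d-1)/2}(1+\O(\rho^2))$, which exactly cancels $\rho^{(1-d)/2}$, and $u_\star$ extends to a function smooth in $\rho^2$ at the origin. Near $\rho=1$, Lemma~\ref{Lem:hom near 1} gives $\psi_3(\rho,\lambda_0)\sim(1+\rho)^{3/4-\lambda_0/2}(1-\rho)^{1/4+\lambda_0/2}[1+\O(1-\rho)]$, and the factor $(1-\rho)^{1/4+\lambda_0/2}$ exactly cancels $(1-\rho^2)^{-1/4-\lambda_0/2}$, leaving $u_\star(\rho)=\mathrm{const}\cdot(1+\rho)^{1/2-\lambda_0}[1+\O(1-\rho)]$. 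Because the $\O$-terms in Lemmas~\ref{Lem:hom near 1} and~\ref{fundi near 0} are of symbol type, this local expansion survives one differentiation, so $u_\star$ and $u_\star'$ are bounded up to $\rho=1$ and thus $u_\star\in H^1(\B^d_1)$. Consequently $\lambda_0\in\sigma_p(\Lf)$, which by Lemma~\ref{lem:spectrum L} forces $\lambda_0\in\{\Re z<0\}\cup\{1\}$, contradicting $\Re\lambda_0\in[0,\tfrac14]$.

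The claim $c_{\mathrm{f}_{2,3}}(\lambda_0)\neq 0$ is obtained by the identical construction, producing instead an $H^1$ eigenvector of $\Lf_0$. Since $\Lf_0$ generates a bounded semigroup, $\sigma(\Lf_0)\subset\{\Re z\leq 0\}$, which takes care of $\Re\lambda_0>0$. For $\Re\lambda_0=0$ I would replay the hypergeometric analysis from Lemma~\ref{lem:spectrum L}: with $V\equiv 0$ the spectral equation reduces under $z=\rho^2$ to a hypergeometric ODE with parameters $a=\tfrac{\lambda}{2}+\tfrac{d}{4}$, $b=\tfrac{\lambda}{2}+\tfrac{d-2}{4}$, $c=\tfrac{d}{2}$, and $c-a-b=\tfrac12-\lambda$; since for $\Re\lambda\geq 0$ neither $a$ nor $b$ is a non-positive integer, Eq.~(10.12) of \cite{Olv97} yields that the derivative of the regular solution $\,_2F_1(a,b;c;\rho^2)$ behaves like $C(1-\rho^2)^{-1/2-\lambda_0}$ with $C\neq 0$, which is incompatible with $u\in H^1$. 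The principal technical obstacle is the $H^1$ verification at $\rho=1$ in the first step: the cancellation between the singular prefactor and the decaying factor of $h_1$ is delicate and really relies on the symbol-type nature of the error terms, since one needs the cancellation to persist after one differentiation.
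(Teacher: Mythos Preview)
Your argument is correct and follows essentially the same strategy as the paper: both proofs argue by contradiction, showing that the vanishing of $c_{2,3}(\lambda_0)$ (resp.\ $c_{\mathrm{f}_{2,3}}(\lambda_0)$) would produce an $H^1$ eigenfunction of $\Lf$ (resp.\ $\Lf_0$) in the forbidden strip, then appeal to Lemma~\ref{lem:spectrum L} (resp.\ the semigroup bound for $\Lf_0$ plus a hypergeometric argument on the imaginary axis). The only minor differences are that you spell out the $H^1$ verification at the endpoints in detail whereas the paper simply asserts $u_1(\cdot,\lambda)\in H^1(\B^d_1)$, and for the imaginary-axis case of the free operator you invoke the derivative asymptotics of $\,_2F_1$ via \cite[Eq.~(10.12)]{Olv97} (mirroring the proof of Lemma~\ref{lem:spectrum L}) while the paper instead uses the explicit connection coefficient $c_3=\Gamma(c)\Gamma(a+b-c+1)/(\Gamma(a)\Gamma(b))$; both routes are equivalent.
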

\begin{proof}
We argue by contradiction and assume that $c_{2,3}(\lambda)$ vanishes for some  $\lambda \in \C$ with $\Re \lambda \in  [0,\frac{1}{4}]$. Then $u_{1}(.,i\omega)\in H^1(\B^d_1)$ and so, $\lambda$ would be an eigenvalue of $\Lf.$ However, from Lemma \ref{lem:spectrum L} we infer that the point spectrum of $\Lf$, denoted by $\sigma_p(\Lf)$, satisfies $$\sigma_p(\Lf)\subset\{\lambda\in \C: \Re \lambda <0\} \cup \{1\},$$ which contradicts the assumption.
To prove the nonvanishing of $c_{\mathrm{f}_{2,3}}(\lambda)$ we argue similarly, and set 
$u_{\mathrm{f}_1}(\rho,\lambda):=\rho^{\frac{1-d}{2}}(1-\rho^2)^{-\frac{1}{4}-\frac{\lambda}{2}}v_{\mathrm{f}_1}(\rho,\lambda)$.
This function then solves
\begin{align}\label{Eq:nopot}
(1-\rho^2)u_1''(\rho)+\left(\frac{d-1}{\rho}-(2\lambda+d)\rho\right)u_1'(\rho)-\left(\lambda\left(\lambda+d-1\right)+d\frac{d-2}{4}\right)u_1(\rho)=0
\end{align}
for $\rho \in (0,1)$ and the vanishing of $c_{\mathrm{f}_{2,3}}(\lambda)$ implies
$$u_{\mathrm{f}_1}(.,\lambda) \in H^1(\B^d_1).$$
Consequently, if $\lambda$ is such that $c_{\mathrm{f}_{2,3}}(\lambda)$ vanishes, then $\lambda$ is an eigenvalue of $\Lf_0$. However, as $\Lf_0$ generates a $C_0$-semigroup, the spectrum of $\Lf_0$ is confined to the closed left halfplane.
Thus, we are left with checking the claim on the imaginary axis. Assume that $c_{\mathrm{f}_{2,3}}(i \omega)$ vanishes for some $\omega\in \R$. The Frobenius indices of Eq.~\eqref{Eq:nopot} are given by $\{0,\frac{d-2}{2}\}$ at $0$ and $\{0,\frac{1}{2}-\lambda\}$ at 1 and so, from  $u_{\mathrm{f}_1}(.,i\omega)\in H^1(\B^d_1)$, we see that $u_{\mathrm{f}_1}(.,i\omega)$ is in fact a smooth function on $[0,1]$.
Setting $z=\rho^2$ and $v(z)=u(\sqrt{z})$, transforms equation \eqref{Eq:nopot} into
\begin{align}\label{eq:hypergeo2}
z(1-z)v''(z)&+\frac{d}{2} v'(z)-\left(i\omega+\frac{d+1}{2}\right)z v'(z) -\frac{1}{4}\left(i\omega\left(i\omega+d-1\right)+d\frac{d-2}{4}\right)v_1(z)=0.
\end{align}
This is now again a hypergeometric differential equation,
with $a=\frac{2i\omega+d-2}{4}, b=\frac{2i\omega+d}{4},$ and $c=\frac{d}{2}$.
A fundamental system of this equation near $0$, is therefore given by 
\begin{align*}
f_1(z):=\, _2F_1(a,b;c;z)
\end{align*}
and a second solution which diverges at $\rho=0$,
while near 1, a fundamental system  is given by 
\begin{align*}
f_2(z):=&\,_2F_1(a,b;a+b+1-c;1-z)\\
f_3(z):=& (1-z)^{\frac{1}{2}-i\omega} \,_2F_1(c-a,c-b;c-a-b+1;1-z).
\end{align*}
Therefore, for a solution of Eq.~\eqref{eq:hypergeo2} to be smooth, it has to be a multiple of $f_1$.
Let $c_2,c_3\in \C $ be such that
\begin{align*}
f_1(z)=c_2 f_2(z)+c_3 f_3(z)
\end{align*}
and note that the smoothness of $u_{\mathrm{f}_1}$ necessitates the vanishing of $c_3$. Fortunately, $c_3$ is explicitly given as
\begin{align*}
c_3=\frac{\Gamma(c)\Gamma(a+b-c+1)}{\Gamma(a) \Gamma(b)}.
\end{align*}
So, for $c_3$ to vanish, either $a$ or $b$ would have to be a pole of $\Gamma$ which is impossible on the imaginary axis.
\end{proof}

Thanks to the last lemma, we obtain a solution $u_0$ which does not diverge at $\rho=0$ by setting
$u_0=u_2-\frac{c_{2,4}}{c_{2,3}}u_1$.
Next, we denote the free solution with the same behavior as $u_j$ by $u_{\mathrm{f}_j}$ for $j=0,1,2$
and note that
\begin{align*}
W(u_1,u_0)=W(u_{\mathrm f_1},u_{\mathrm f_0})=2i\rho^{1-d}(1-\rho^2)^{-\frac{1}{2}-\lambda}.
\end{align*}
Observe now, that neither $u_0$ nor $u_1$ is in $H^1(\B^d_1)$. The function $u_1$ fails to be in the space $H^1(\B^d_1)$ due to its divergent behavior at $0$, whereas $u_0'$ is not square integrable at 1. It follows that a unique solution $u\in H^1(\B^d_1)$ of Eq.~\eqref{generalised spectral eq} is given by 
\begin{align*}
u(\rho,\lambda)=u&_0(\rho,\lambda) \int_\rho^{1}\frac{u_1(s,\lambda)}{W(u_1(.,\lambda),u_0(.,\lambda))(s)}\frac{F_\lambda(s)}{1-s^2} d s\\
&+u_1(\rho,\lambda) \int_0^\rho\frac{u_0(s,\lambda)}{W(u_1(.,\lambda),u_0(.,\lambda))(s)}\frac{F_\lambda(s)}{1-s^2} d s.
\end{align*}
Hence, we see that the Green function of Eq. \eqref{generalised spectral eq}
is given by
\begin{align*}
G(\rho,s,\lambda)=\frac{s^{d-1}(1-s^2)^{-\frac{1}{2}+\lambda}}{2i}\begin{cases}
u_0(\rho,\lambda)u_1(s,\lambda) \text{ if } \rho \leq s
\\
u_1(\rho,\lambda)u_0(s,\lambda) \text{ if } \rho \geq s.
\end{cases}
\end{align*}
Similarly, we obtain a solution $u_\mathrm{f}$ of the free equation which is of the same form and an associated Green function 
\begin{align*}
G_{\mathrm{f}}(\rho,s,\lambda)=\frac{s^{d-1}(1-s^2)^{-\frac{1}{2}+\lambda}}{2i}\begin{cases}
u_{\mathrm{f}_0}(\rho,\lambda)u_{\mathrm{f}_1}(s,\lambda) \text{ if } \rho \leq s
\\
u_{\mathrm{f}_1}(\rho,\lambda)u_{\mathrm{f}_0}(s,\lambda) \text{ if } \rho \geq s.
\end{cases}
\end{align*}
This implies the following decomposition of the Green function.
\begin{lem}\label{lem:decomp}
We can decompose  $G$ according to
$$G(\rho,s,\lambda)=G_{\mathrm{f}}(\rho,s,\lambda) +\frac{\rho^{\frac{1-d}{2}}}{2i}(1-\rho^2)^{-\frac{1}{4}-\frac{\lambda}{2}}s^{\frac{d-1}{2}}(1-s^2)^{-\frac{3}{4}+\frac{\lambda}{2}}\sum_{j=1}^8 G_{j}(\rho,s,\lambda)$$
with
\begin{align*}
G_1(\rho,s,\lambda)&=1_{\R_+}(s-\rho)\chi_\lambda(s)b_{1}(\rho,\lambda)[b_1(s,\lambda)\alpha_1(\rho,s,\lambda)+b_2(s,\rho)\alpha_2(\rho,s,\lambda)
\\
&\quad+\O(s^{\frac{7-d}{2}}\langle\omega\rangle^{1-\frac{d}{2}})]
\\
G_2(\rho,s,\lambda)&=1_{\R_+}(s-\rho)\chi_\lambda(\rho)(1-\chi_\lambda(s))b_1(\rho,\lambda)h_1(s,\lambda)\beta(\rho,s,\lambda)
\\
G_3(\rho,s,\lambda)&=1_{\R_+}(s-\rho)(1-\chi_\lambda(\rho))(1-\chi_\lambda(s))h_{1}(\rho,\lambda)h_1(s,\lambda)\gamma_1(\rho,s,\lambda)
\\
G_4(\rho,s,\lambda)&=1_{\R_+}(s-\rho)(1-\chi_\lambda(\rho))(1-\chi_\lambda(s))
h_2(\rho,\lambda)h_1(s,\lambda)\gamma_2(\rho,s,\lambda)
\\
G_5(\rho,s,\lambda)&=1_{\R_+}(\rho-s)\chi_\lambda(\rho)[b_{1}(\rho,\lambda)\alpha_1(s,\rho,\lambda)+b_2(\rho,\lambda)\alpha_2(s,\rho,\lambda)+\O(\rho^{\frac{7-d}{2}}\langle\omega\rangle^{1-\frac{d}{2}})]
\\
&\quad \times b_1(s,\lambda)
\\
G_6(\rho,s,\lambda)&=1_{\R_+}(\rho-s)(1-\chi_\lambda(\rho))\chi_\lambda(s)h_1(\rho,\lambda)b_1(s,\lambda)\beta(s,\rho,\lambda)
\\
G_7(\rho,s,\lambda)&=1_{\R_+}(\rho-s)(1-\chi_\lambda(\rho))(1-\chi_\lambda(s))h_{1}(\rho,\lambda)h_1(s,\lambda)\gamma_1(s,\rho,\lambda)
\\
G_8(\rho,s,\lambda)&=1_{\R_+}(\rho-s)(1-\chi_\lambda(\rho))(1-\chi_\lambda(s)h_1(\rho,\lambda)h_2(s,\lambda)\gamma_2(s,\rho,\lambda)
\end{align*}
where
\begin{align*}
\alpha_j(\rho,s,\lambda)&=\O(\langle\omega\rangle^{-1})+\O(\rho^2\langle\omega\rangle^{0})+\O(s^2\langle\omega\rangle^{0})+\O(\rho^2s^2\langle\omega\rangle^{0})  
\\
\beta(\rho,s,\lambda)&= 
\O(\langle\omega\rangle^{-1})+\O(s^0(1-s)\langle\omega\rangle^{-1})+\O(\rho^2\langle\omega\rangle^{0})+\O(\rho^2s^0(1-s)\langle\omega\rangle^{-1})
\\
\gamma_j(\rho,s,\lambda)&=
\O(\langle\omega\rangle^{-1})+\O(\rho^0(1-\rho)\langle\omega\rangle^{-1})+\O(s^0(1-s)\langle\omega\rangle^{-1})
\\
&\quad+\O(\rho^0(1-\rho)s^0(1-s)\langle\omega\rangle^{-2})
\end{align*}
for $j=1,2$.
\end{lem}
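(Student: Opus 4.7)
The plan is to substitute the explicit representations of $u_0, u_1$ in terms of the $v$-functions, factor out the common $(\rho,s,\lambda)$-dependent prefactor, and match the eight pieces of the sum via a partition of unity using $\chi_\lambda$. Concretely, since $u_j(\rho,\lambda) = \rho^{(1-d)/2}(1-\rho^2)^{-1/4-\lambda/2} v_j(\rho,\lambda)$ and $W(u_1,u_0)(\rho) = 2i\rho^{1-d}(1-\rho^2)^{-1/2-\lambda}$, direct substitution into the formula for $G$ yields
\[
G(\rho,s,\lambda) = \frac{\rho^{(1-d)/2}(1-\rho^2)^{-1/4-\lambda/2} s^{(d-1)/2}(1-s^2)^{-3/4+\lambda/2}}{2i}\, P(\rho,s,\lambda)
\]
with $P(\rho,s,\lambda) = 1_{\R_+}(s-\rho) v_0(\rho) v_1(s) + 1_{\R_+}(\rho-s) v_1(\rho) v_0(s)$, and the identical identity holds for $G_{\mathrm{f}}$ with $v_j$ replaced by $v_{\mathrm{f}_j}$. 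Thus the common prefactor in the lemma is automatic, and the claim reduces to identifying $P - P_{\mathrm{f}}$ with $\sum_{j=1}^{8} G_j$.

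Next, I would decompose $v_1 = \chi_\lambda A_1 + (1-\chi_\lambda) B_1$ and $v_0 = v_2 - (c_{2,4}/c_{2,3}) v_1 = \chi_\lambda A_0 + (1-\chi_\lambda) B_0$, where $A_0 = (c_{1,4} - c_{2,4}c_{1,3}/c_{2,3})\psi_1$, $B_0 = \psi_4 - (c_{2,4}/c_{2,3})\psi_3$, $A_1 = c_{1,3}\psi_1 + c_{2,3}\psi_2$, and $B_1 = \psi_3$. Thanks to Lemma \ref{connectioncoef}, the identities $A_0 \equiv B_0$ and $A_1 \equiv B_1$ hold throughout the overlap $[\rho_\lambda, \widehat{\rho}_\lambda]$, so $\chi_\lambda$ is a genuine partition of unity between two representations of the same function. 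Expanding the product $v_0(\rho)v_1(s)$ produces four terms for each ordering of $\rho$ and $s$. Pieces in which $v_1(s)$ (resp.\ $v_0(\rho)$) carries a $\chi_\lambda(s)$ (resp.\ $\chi_\lambda(\rho)$) factor are written via $\psi_1, \psi_2$ and hence $b_1, b_2$, producing $G_1$ and $G_5$; pieces carrying $(1-\chi_\lambda)$ on both sides are written via $\psi_3, \psi_4$ and hence $h_1, h_2$, producing $G_3, G_4, G_7, G_8$; the mixed pieces give $G_2$ and $G_6$.

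To identify the error functions $\alpha_j, \beta, \gamma_j$, I would telescope $v_0 v_1 - v_{\mathrm{f}_0} v_{\mathrm{f}_1}$ in each piece and insert the explicit asymptotics. The connection coefficients differ from their free analogues by $\O(\langle\omega\rangle^{-1})$ because Lemmas \ref{connectioncoef} and \ref{connectioncoeffree} produce identical leading Wronskian formulas; the differences $\psi_j - \psi_{\mathrm{f}_j}$ inherit the same $\O$-type bounds appearing in the Volterra constructions of Lemmas \ref{Lem:hom near 1} and \ref{fundi near 0}, for instance $\psi_1 - \psi_{\mathrm{f}_1} = b_1 \cdot \O(\rho^2\langle\omega\rangle^0)$ and $\psi_3 - \psi_{\mathrm{f}_3} = h_1 \cdot \O(\rho^0(1-\rho)\langle\omega\rangle^{-1})$, and similarly for $\psi_2, \psi_4$. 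Multiplying these out in the four-way expansion of each piece produces exactly the $\alpha_j, \beta, \gamma_j$ structure in the statement, while the isolated $\O(s^{(7-d)/2}\langle\omega\rangle^{1-d/2})$ boundary term in $G_1$ and $G_5$ is inherited verbatim from the second explicit correction in the expansion of $\psi_2$ in Lemma \ref{fundi near 0}. The main bookkeeping obstacle is verifying that in $G_1$ and $G_5$ the explicit $\chi_\lambda(\rho)$ cutoff can indeed be dropped: here the overlap identity $A_0 \equiv B_0$ is essential, as it allows the transition-region contribution from $(1-\chi_\lambda(\rho))$ to be absorbed into the $b_1(\rho)\,\alpha_j$ factor without degrading the stated bounds.
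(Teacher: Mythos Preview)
Your proposal is correct and follows essentially the same route as the paper: after extracting the common prefactor, the paper also writes out $u_0,u_1$ (equivalently $v_0,v_1$) in their $\chi_\lambda$/$(1-\chi_\lambda)$ representations, records the identities $c_{j,k}-c_{\mathrm{f}_{j,k}}=\O(\langle\omega\rangle^{-1})$, $c_{1,3}\frac{c_{2,4}}{c_{2,3}}-c_{\mathrm{f}_{1,3}}\frac{c_{\mathrm{f}_{2,4}}}{c_{\mathrm{f}_{2,3}}}=\O(\langle\omega\rangle^{-1})$ and $\psi_j-\psi_{\mathrm{f}_j}$ of the type you describe, and then invokes the telescoping identity $ab-cd=a(b-d)+d(a-c)$ repeatedly. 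Your additional remark on why the $\chi_\lambda(\rho)$ factor can be dropped in $G_1,G_5$ is a point the paper leaves implicit in ``straightforward computation''.
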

\begin{proof}
We start by recalling that
\begin{align*}
u_0(\rho)&=\rho^{\frac{1-d}{2}}(1-\rho^2)^{-\frac{1}{4}-\frac{\lambda}{2}}\chi_\lambda(\rho)\left(c_{1,4}(\lambda)-c_{1,3}(\lambda)\frac{c_{2,4}(\lambda)}{c_{2,3}(\lambda)}\right)\psi_1(\rho,\lambda)
\\
&\quad+\rho^{\frac{1-d}{2}}(1-\rho^2)^{-\frac{1}{4}-\frac{\lambda}{2}}(1-\chi_\lambda(\rho))\left[\psi_4(\rho,\lambda)-\frac{c_{2,4}(\lambda)}{c_{2,3}(\lambda)}\psi_3(\rho,\lambda)\right]
\end{align*}
and
\begin{align*}
u_1(\rho)&=\rho^{\frac{1-d}{2}}(1-\rho^2)^{-\frac{1}{4}-\frac{\lambda}{2}}\left[\chi_\lambda(\rho)(c_{1,3}(\lambda)\psi_1(\rho,\lambda)+c_{2,3}(\lambda)\psi_2(\rho,\lambda))
+\left(1-\chi_\lambda(\rho)\right)\psi_3(\rho,\lambda)\right].
\end{align*}
Furthermore, the solutions $u_{\mathrm{f}_j}$ are of the same form with $\psi_j$ and $c_{j,k}$ replaced by $\psi_{\mathrm{f}_j}$ and $c_{\mathrm{f}_{j,k}}$.
Moreover, we also record the following identities
\begin{align*}
c_{j,k}(\lambda)-c_{\mathrm{f}_{j,k}}(\lambda)&=\O(\langle\omega\rangle^{-1}),
\\
c_{1,3}(\lambda)\frac{c_{2,4}(\lambda)}{c_{2,3}(\lambda)}-c_{\mathrm{f}_{1,3}}(\lambda)\frac{c_{\mathrm{f}_{2,4}}(\lambda)}{c_{\mathrm{f}_{2,3}}(\lambda)}&=\O(\langle\omega\rangle^{-1}),
\end{align*}
and
\begin{align*}
\psi_1(\rho,\lambda)-\psi_{\mathrm{f}_1}(\rho,\lambda)=b_1(\rho,\lambda)\O(\rho^2\langle\omega\rangle^{0}).
\end{align*}
Since similar identities hold for 
$$
\psi_j-\psi_{\mathrm{f}_j}
$$
for $j=2,3,4$, the claim follows from a 
repeated 
usage of the identity
\begin{align*}
ab-cd=a(b-d)+d(a-c)
\end{align*}
and a straightforward computation.
\end{proof}
To cast the $G_j$ in a more manageable form, we also derive corresponding symbol forms.
\begin{lem}\label{lem:kernel symb}
The functions $\widetilde{G}_j$ defined as 

$$
\widetilde{G}_j(\rho,s,\lambda):=\frac{\rho^{\frac{1-d}{2}}}{2i}(1-\rho^2)^{-\frac{1}{4}-\frac{\lambda}{2}}s^{\frac{d-1}{2}}(1-s^2)^{-\frac{3}{4}+\frac{\lambda}{2}} G_{j}(\rho,s,\lambda)$$
satisfy
\begin{align*}
\widetilde{G}_1(\rho,s,\lambda)&=1_{\R_+}(s-\rho)\chi_\lambda(s)(1-\rho^2)^{-\frac{1}{4}-\frac{\lambda}{2}}(1-s^2)^{-\frac{3}{4}+\frac{\lambda}{2}}\O(\rho^0s\langle\omega\rangle^{-1})
\\
\widetilde{G}_2(\rho,s,\lambda)&=1_{\R_+}(s-\rho)\chi_\lambda(\rho)(1-\chi_\lambda(s))(1-\rho^2)^{-\frac{1}{4}-\frac{\lambda}{2}}
\\
& \quad\times s^{\frac{d-1}{2}}(1-s)^{-\frac{1}{2}+\lambda}\O(\rho^{0}\langle\omega\rangle^{\frac{d-3}{2}})\widehat{\beta}(\rho,s,\lambda)
\\
\widetilde{G}_3(\rho,s,\lambda)&=1_{\R_+}(s-\rho)(1-\chi_\lambda(\rho))(1-\chi_\lambda(s))\rho^{\frac{1-d}{2}}(1+\rho)^{\frac{1}{2}-\lambda}s^{\frac{d-1}{2}}(1-s)^{-\frac{1}{2}+\lambda}
\\
& \quad\times\O(\langle\omega\rangle^{-1})\widehat{\gamma}_1(\rho,s,\lambda)
\\
\widetilde{G}_4(\rho,s,\lambda)&=1_{\R_+}(s-\rho)(1-\chi_\lambda(\rho))(1-\chi_\lambda(s))\rho^{\frac{1-d}{2}}(1-\rho)^{\frac{1}{2}-\lambda}s^{\frac{d-1}{2}}(1-s)^{-\frac{1}{2}+\lambda}
\\
& \quad\times\O(\langle\omega\rangle^{-1})\widehat{\gamma}_2(\rho,s,\lambda)
\\
\widetilde{G}_5(\rho,s,\lambda)&=1_{\R_+}(\rho-s)\chi_\lambda(\rho)(1-\rho^2)^{-\frac{1}{4}-\frac{\lambda}{2}}(1-s^2)^{-\frac{3}{4}+\frac{\lambda}{2}}\O(\rho^0s\langle\omega\rangle^{-1})
\\
\widetilde{G}_6(\rho,s,\lambda)&=1_{\R_+}(\rho-s)(1-\chi_\lambda(\rho))\chi_\lambda(s)\rho^{\frac{1-d}{2}}(1+\rho)^{\frac{1}{2}-\lambda}
\\
& \quad\times
(1-s^2)^{-\frac{3}{4}+\frac{\lambda}{2}}\O(s^{d-1}\langle\omega\rangle^{\frac{d-3}{2}})\widehat{\beta}(s,\rho,\lambda)
\\
\widetilde{G}_7(\rho,s,\lambda)&=1_{\R_+}(s-\rho)(1-\chi_\lambda(\rho))(1-\chi_\lambda(s))\rho^{\frac{1-d}{2}}(1+\rho)^{\frac{1}{2}-\lambda}s^{\frac{d-1}{2}}(1-s)^{-\frac{1}{2}+\lambda}
\\
& \quad\times\O(\langle\omega\rangle^{-1})\widehat{\gamma}_1(s,\rho,\lambda)
\\
\widetilde{G}_8(\rho,s,\lambda)&=1_{\R_+}(s-\rho)(1-\chi_\lambda(\rho))(1-\chi_\lambda(s))\rho^{\frac{1-d}{2}}(1+\rho)^{\frac{1}{2}-\lambda}s^{\frac{d-1}{2}}(1+s)^{-\frac{1}{2}+\lambda}
\\
& \quad\times\O(\langle\omega\rangle^{-1})\widehat{\gamma}_2(s,\rho,\lambda)
\end{align*}
with
\begin{align*}
\widehat{\beta}(\rho,s,\lambda)&=[1+\O(s^{-1}(1-s)\langle\omega\rangle^{-1})]\beta(\rho,s,\lambda)
\\
\widehat{\gamma}_j(\rho,s,\lambda)&=[1+\O(\rho^{-1}(1-\rho)\langle\omega\rangle^{-1})][1+\O(s^{-1}(1-s)\langle\omega\rangle^{-1})]\gamma_j(\rho,s,\lambda).
\end{align*}
\end{lem}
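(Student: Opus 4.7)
The plan is a direct substitution into each $G_j$ of the representations of $b_1, b_2$ and $h_1, h_2$ from Lemmas \ref{Lem:hom near 1} and \ref{fundi near 0}, together with the Bessel asymptotics \eqref{rep:BesselJ}--\eqref{rep:BesselYhalf} and the identity $\varphi(\rho) = \rho + \O(\rho^3)$ near $0$, followed by absorbing the algebraic prefactor $\rho^{\frac{1-d}{2}}(1-\rho^2)^{-\frac{1}{4}-\frac{\lambda}{2}}s^{\frac{d-1}{2}}(1-s^2)^{-\frac{3}{4}+\frac{\lambda}{2}}$ into the resulting products. The crucial observation that drives the bookkeeping is that on $\operatorname{supp}\chi_\lambda$ we have $\rho, s \leq \widehat{\rho}_\lambda \lesssim \langle\omega\rangle^{-1}$, so excess positive powers of $\rho$ or $s$ can be traded for powers of $\langle\omega\rangle^{-1}$; this is how the errors carried by $\alpha_j, \beta, \gamma_j$ ultimately shrink to the stated orders.

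I would then proceed kernel by kernel. For $\widetilde{G}_1$ and $\widetilde{G}_5$ both arguments are forced into $\operatorname{supp}\chi_\lambda$; the small-argument Bessel asymptotics give $b_1(\rho,\lambda) = \O(\rho^{\frac{d-1}{2}}\langle\omega\rangle^{\frac{d-2}{2}})$ and $b_2(\rho,\lambda) = \O(\rho^{\frac{3-d}{2}}\langle\omega\rangle^{-\frac{d-2}{2}})$, so the $b_1(\rho)b_1(s)\alpha_1$ contribution dressed by $\rho^{\frac{1-d}{2}}s^{\frac{d-1}{2}}$ becomes $\O(s^{d-1}\langle\omega\rangle^{d-3})$, which collapses to $\O(s\langle\omega\rangle^{-1})$ after using $s^{d-2} \lesssim \langle\omega\rangle^{-(d-2)}$; the $b_1 b_2 \alpha_2$ piece and the remainder $\O(s^{\frac{7-d}{2}}\langle\omega\rangle^{1-\frac{d}{2}})$ are no worse by the analogous count. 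For $\widetilde{G}_3, \widetilde{G}_4, \widetilde{G}_7, \widetilde{G}_8$ I would plug in the explicit forms of $h_1, h_2$: the algebraic factors $(1+\rho)^{\frac{3}{4}-\frac{\lambda}{2}}(1-\rho)^{\frac{1}{4}+\frac{\lambda}{2}}$ combine with $(1-\rho^2)^{-\frac{1}{4}-\frac{\lambda}{2}}$ to yield $(1+\rho)^{\frac{1}{2}-\lambda}$ (and symmetrically $(1-\rho)^{\frac{1}{2}-\lambda}$ for $h_2$), the $s$-side similarly producing $(1-s)^{-\frac{1}{2}+\lambda}$, while each $(\sqrt{a(\lambda)})^{-1}$ supplies one factor of $\langle\omega\rangle^{-\frac{1}{2}}$ so that the two of them give the overall $\O(\langle\omega\rangle^{-1})$; the correction brackets of $h_1, h_2$ get absorbed together with $\gamma_j$ into $\widehat{\gamma}_j$. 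The mixed kernels $\widetilde{G}_2, \widetilde{G}_6$ combine both mechanisms: $\rho^{\frac{1-d}{2}}b_1(\rho,\lambda) = \O(\langle\omega\rangle^{\frac{d-2}{2}})$ on the near-zero side and the $h_1$-factor on the far side supplies $\langle\omega\rangle^{-\frac{1}{2}}$ along with the $(1-s)^{-\frac{1}{2}+\lambda}$, together producing the claimed $\O(\langle\omega\rangle^{\frac{d-3}{2}})$.

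The bookkeeping is voluminous but each step is routine; the one genuinely delicate point is preserving the \emph{symbol} structure, i.e., uniform control of all derivatives in $\rho$, $s$, and $\omega$, as opposed to mere pointwise bounds. This is the main obstacle, but because every constituent -- the cutoff $\chi_\lambda$, the Bessel-based functions $b_1, b_2$, the functions $h_1, h_2$ from Lemma \ref{Lem:hom near 1}, and the error symbols $\alpha_j, \beta, \gamma_j$ -- has already been shown to be of symbol type in the preceding lemmas, the problem reduces to Leibniz and chain-rule bookkeeping through the diffeomorphism $\varphi$, in the spirit of the symbol calculus developed in \cite{Don17, DonRao20}.
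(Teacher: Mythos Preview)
Your approach is correct and coincides with the paper's own proof, which is the single sentence ``This is a direct consequence of Lemmas \ref{Lem:hom near 1} and \ref{fundi near 0}.'' You have merely spelled out the mechanics of that direct consequence---the substitution of the $b_j$- and $h_j$-representations, the cancellation of algebraic prefactors, and the trading of powers of $\rho,s$ against $\langle\omega\rangle^{-1}$ on $\operatorname{supp}\chi_\lambda$---and correctly identified that the only nontrivial point is keeping the symbol structure intact, which is handled by the symbol calculus already established for the constituent functions.
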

\begin{proof}
This is a direct consequence of Lemmas \ref{Lem:hom near 1} and \ref{fundi near 0}.
\end{proof}
To proceed, let $\ff \in C^2$ and set $\widetilde{\ff}=(\I -\Pf)\ff$, where $\Pf$ is the spectral projection associated to the eigenvalue $1$ of $\Lf$.
By making use of Laplace inversion and Lemma \ref{lem:decomp}, the first component $\Sf$ applied to $ \widetilde{\ff}$ is then explicitly given by
\begin{align}\label{Eq:rep semigroup}
[\Sf(\tau)\widetilde{\ff}]_1(\rho)=[\Sf_0(\tau)\widetilde{\ff}]_1(\rho)+\frac{1}{2\pi i}\sum_{j=1}^8\lim_{N \to \infty}\int_{\varepsilon-iN}^{\varepsilon+iN}
e^{\lambda \tau}\int_0^1\widetilde{G}_j(\rho,s,\lambda)F_\lambda(s) ds d\lambda
\end{align}
for $\varepsilon>0$
and $F_\lambda(s)=s\widetilde{f}_1'(s)+(\lambda+\frac{d}{2})\widetilde{f}_1(s)+\widetilde{f}_2(s)$.
\section{Strichartz estimates}
In this section, we will always assume $d\geq4$, as in the case $d=3$, the Strichartz estimates were already shown in \cite{Don17}. This restriction is to avoid the Strichartz pair $(2,\infty)$, which we could just as well treat here. However, as this would slightly  complicate matters (mostly the notation), we choose not do so.
The representation \eqref{Eq:rep semigroup} of $\Sf$ is one of the key ingredients to prove Theorem \ref{thm:strichartz}, i.e., to prove Strichartz estimates for the full semigroup $\Sf$.
To accomplish this, we start by defining operators $T_j$ for $j=1,\dots,8$ as
\begin{align*}
T_{j}(\tau)f(\rho):=\frac{1}{2\pi}\lim_{\varepsilon \to 0^+}\lim_{N\to \infty}\int_{\varepsilon-iN}^{\varepsilon+iN} 
e^{(\varepsilon+i\omega) \tau}\int_0^1\widetilde{G}_j(\rho,s,\varepsilon+i\omega)f(s) ds d\omega
\end{align*}
for $f\in C^2(\overline{\B^d_1})$.
\begin{lem} \label{lem: interchange}
The operators $T_j$ satisfy
\begin{align*}
T_{j}(\tau)f(\rho)=\frac{1}{2\pi} \int_0^1 \int_\R
e^{i\omega \tau} \widetilde{G}_j(\rho,s,i\omega)f(s) d\omega  ds
\end{align*}
for all $f\in C^2(\overline{\B^d_1})$.
\end{lem}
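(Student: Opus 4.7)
The plan is to justify two analytic manipulations: pushing the contour of integration from $\Re\lambda = \varepsilon$ onto the imaginary axis (which removes the outer limit $\varepsilon \to 0^+$), and then invoking Fubini's theorem to swap the $s$- and $\omega$-integrals.

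For the contour shift, the first task is to establish that the kernel $\widetilde G_j(\rho,s,\lambda)$ is holomorphic in $\lambda$ on the closed strip $\{0\le\Re\lambda\le\varepsilon_0\}$ for some small $\varepsilon_0>0$. This is a consequence of the Volterra constructions in Lemmas \ref{Lem:hom near 1} and \ref{fundi near 0}, which produce fundamental systems depending holomorphically on $\lambda$, together with the non-vanishing of $c_{2,3}(\lambda)$ on the strip (Lemma \ref{lem:roots of free c23}) and the fact that the only right half-plane point in the spectrum of $\Lf$ is $\lambda=1$ (Lemma \ref{lem:spectrum L}). Applying Cauchy's theorem to the rectangle with vertices $\varepsilon\pm iN$ and $\pm iN$ then reduces the contour shift to showing the vanishing of the two horizontal segments as $N\to\infty$.

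The vanishing of the horizontal segments, as well as the absolute integrability required for Fubini on the limiting line $\Re\lambda = 0$, will both be deduced from the oscillatory character of the kernels in the $s$-variable: the Bessel factors $b_1,b_2$ and the hyperbolic factors $h_1,h_2$ all have phases that carry the large parameter $\omega$ through $a(\lambda)\varphi(\cdot)$, so integrating by parts in $s$ gains a factor $\langle\omega\rangle^{-1}$ at the cost of one derivative of $f$. Since $f\in C^2$, performing integrations by parts up to twice and combining with the symbol estimates of Lemma \ref{lem:kernel symb}, one obtains an $\omega$-decay that dominates $\langle\omega\rangle^{-1-\delta}$ for each $\widetilde G_j$. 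Boundary contributions at $s=0,1$ either vanish outright thanks to the cutoffs $\chi_\lambda$ and $1-\chi_\lambda$, or are controlled by the endpoint behaviour of the Bessel and hyperbolic factors.

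The main technical obstacle will be the bookkeeping needed to track the worst-case growth in each of the eight kernels --- in particular $\widetilde G_2$ and $\widetilde G_6$, which carry the factor $\langle\omega\rangle^{(d-3)/2}$ --- and verifying that the boundary terms produced by integration by parts indeed vanish once the factors $(1-\rho^2)^{\pm(1/4+\lambda/2)}$, the cutoffs, and the Bessel prefactors are taken into account. Once this is done, Fubini yields the claimed interchange.
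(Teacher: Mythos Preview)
Your plan would eventually work, but it takes a far more elaborate route than the paper does, and the ``main technical obstacle'' you anticipate is in fact not an obstacle at all.

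The paper's proof is essentially one line: from the symbol forms in Lemma~\ref{lem:kernel symb} one reads off the uniform pointwise bound
\[
|\widetilde G_j(\rho,s,\lambda)|\lesssim \rho^{\frac{1-d}{2}}\langle\omega\rangle^{-2}(1-s)^{-\frac12}
\]
for all $j=1,\dots,8$, all $\rho,s\in(0,1)$, and all $\lambda$ with $\Re\lambda\in[0,\tfrac14]$. Since $\langle\omega\rangle^{-2}$ is integrable over $\R$ and $(1-s)^{-1/2}$ is integrable over $[0,1]$, Fubini--Tonelli and dominated convergence immediately dispose of both the $N\to\infty$ and the $\varepsilon\to 0^+$ limits and justify the interchange. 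No integration by parts in $s$, no Cauchy rectangle, and in fact no regularity of $f$ beyond boundedness is used.

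The point you miss is that the factors $\langle\omega\rangle^{(d-3)/2}$ in $\widetilde G_2$ and $\widetilde G_6$ come accompanied by the cutoffs $\chi_\lambda(\rho)$ and $\chi_\lambda(s)$, which restrict the relevant spatial variable to a region of size $\lesssim\langle\omega\rangle^{-1}$. For $\widetilde G_2$, the support condition $\chi_\lambda(\rho)\neq 0$ forces $\rho\langle\omega\rangle\lesssim 1$, so
\[
\O(\rho^0\langle\omega\rangle^{\frac{d-3}{2}})\,\widehat\beta(\rho,s,\lambda)
=\O(\rho^0\langle\omega\rangle^{\frac{d-5}{2}})
\lesssim \rho^{\frac{1-d}{2}}\langle\omega\rangle^{-2},
\]
and similarly for $\widetilde G_6$ with the roles of $\rho$ and $s$ swapped. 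Thus the apparent growth is already compensated by the symbol calculus once the cutoff is taken into account; one simply trades powers of $\rho$ (or $s$) for decay in $\omega$ rather than integrating by parts against $f$.

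Your strategy of gaining $\omega$-decay via integration by parts in $s$ is in principle viable for the kernels supported near $s=1$ (where the phase $(1-s)^{i\omega}$ is present), but it is unnecessary here and, as you note yourself, the bookkeeping for large $d$ is delicate: with only $f\in C^2$ you get at most two integrations by parts, and differentiating the cutoff $\chi_\lambda(s)$ costs a factor $\langle\omega\rangle$ each time. The paper's argument sidesteps all of this.
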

\begin{proof}
From Lemma \ref{lem:kernel symb}, we obtain the estimate \begin{align*}
|\widetilde{G}_j(\rho,s,\lambda)|\lesssim \rho^{\frac{1-d}{2}}\langle\omega\rangle^{-2}(1-s)^{-\frac{1}{2}}
\end{align*}
for $j=1,\dots 8$, $\Re\lambda\in [0,\frac{1}{4}]$, and $\rho\in (0,1)$.
As a consequence, the claim follows from the Fubini-Tonelli Theorem and dominated convergence.
\end{proof}
\subsection{Technical Lemmas}
To establish Theorem \ref{thm:strichartz}, we will have to make use of a couple of technical Lemmas. We start with some basic results on oscillatory integrals.

\begin{lem}\label{osci1}
	Let $\alpha >0$. Then 
	$$
	\left|\int_\R e^{i a\omega}\O(\langle\omega\rangle^{-(1+\alpha)}) d \omega \right|\lesssim \langle a\rangle^{-2}
	$$
	 for all $a\in \R$. 
\end{lem}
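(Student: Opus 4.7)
The plan is to split into two regimes according to the size of $a$ and use the symbol structure to integrate by parts in the large-$a$ regime.

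First, I would unpack the symbol notation: writing $f(\omega)=\O(\langle\omega\rangle^{-(1+\alpha)})$ means that $f\in C^\infty(\R)$ with $|\partial_\omega^n f(\omega)|\lesssim_n \langle\omega\rangle^{-(1+\alpha)-n}$ for every $n\in\mathbb N_0$. In particular, $f$ itself is integrable since $\alpha>0$, so the integral in the statement is well defined.

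For the regime $|a|\le 1$ I would simply use the trivial bound
\[
\left|\int_\R e^{ia\omega}f(\omega)\,d\omega\right|\le \int_\R |f(\omega)|\,d\omega\lesssim \int_\R\langle\omega\rangle^{-(1+\alpha)}\,d\omega\lesssim 1,
\]
which is equivalent to $\langle a\rangle^{-2}$ on this range.

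For the regime $|a|\ge 1$ I would use the identity $e^{ia\omega}=\frac{1}{(ia)^2}\partial_\omega^2 e^{ia\omega}$ and integrate by parts twice. The boundary terms vanish because $f$ and $f'$ decay like $\langle\omega\rangle^{-(1+\alpha)}$ and $\langle\omega\rangle^{-(2+\alpha)}$ respectively, and one is left with
\[
\int_\R e^{ia\omega}f(\omega)\,d\omega=\frac{1}{(ia)^2}\int_\R e^{ia\omega}f''(\omega)\,d\omega.
\]
The symbol bound on the second derivative, $|f''(\omega)|\lesssim\langle\omega\rangle^{-(3+\alpha)}$, is integrable and yields
\[
\left|\int_\R e^{ia\omega}f(\omega)\,d\omega\right|\lesssim \frac{1}{|a|^2}\int_\R\langle\omega\rangle^{-(3+\alpha)}\,d\omega\lesssim \frac{1}{|a|^2}\sim\langle a\rangle^{-2}.
\]
Combining the two regimes gives the claimed bound. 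There is no real obstacle here; the only point worth a line of care is checking that the boundary terms in the integration by parts do indeed vanish, which follows immediately from the symbol decay of $f$ and $f'$.
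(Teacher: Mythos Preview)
Your proof is correct and follows exactly the same approach as the paper, which simply notes that the integral is absolutely convergent and that the claim follows from two integrations by parts. You have merely spelled out the details (the split into $|a|\le 1$ and $|a|\ge 1$ and the vanishing of the boundary terms) that the paper leaves implicit.
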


\begin{proof}
	Since the integral is absolutely convergent the claim follows by doing two integrations by parts.
\end{proof}
\begin{lem}\label{osci2}
	Let $\alpha >0$. Then,
	\begin{align*}
	\left|\int_\R e^{i a\omega}\O(\langle\omega\rangle^{-\alpha}) d \omega \right|\lesssim |a|^{1-\alpha}\langle a\rangle^{-2}
	\end{align*}
	for all  $a\in \R\setminus\{0\}$.
\end{lem}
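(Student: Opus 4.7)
The proof should split into the two regimes $|a|\geq 1$ and $0<|a|<1$, since the right hand side $|a|^{1-\alpha}\langle a\rangle^{-2}$ behaves qualitatively differently in each. For $|a|\geq 1$ the target is effectively $|a|^{-1-\alpha}$, while for small $|a|$ the target is $|a|^{1-\alpha}$, and the methods of attack differ accordingly.

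For $|a|\geq 1$ I would exploit the fast oscillation of $e^{ia\omega}$ by iterated integration by parts, in direct analogy with the proof of Lemma \ref{osci1}. Using the identity $e^{ia\omega}=(ia)^{-k}\partial_\omega^{k}e^{ia\omega}$ together with the symbol-type control $f^{(j)}(\omega)=\O(\langle\omega\rangle^{-\alpha-j})$ for all $j\in\mathbb{N}_0$, one obtains
\[
\int_\R e^{ia\omega}f(\omega)\,d\omega = \frac{(-1)^{k}}{(ia)^{k}}\int_\R e^{ia\omega}f^{(k)}(\omega)\,d\omega,
\]
the boundary contributions vanishing from the decay of $f^{(j)}$ at infinity. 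For $k$ sufficiently large (larger than $1-\alpha$ to ensure integrability, and larger than $1+\alpha$ to beat the target) the remaining integrand $f^{(k)}$ lies in $L^1(\R)$, producing the clean bound $\lesssim |a|^{-k}$, which for $k>1+\alpha$ dominates $|a|^{-1-\alpha}$ as required.

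For $0<|a|<1$, the argument needs to exploit the oscillation genuinely, since the symbol $f=\O(\langle\omega\rangle^{-\alpha})$ is not absolutely integrable once $\alpha\leq 1$. My plan is to split at the threshold $|\omega|=1/|a|$. On the inner region I would use the pointwise estimate $|f(\omega)|\lesssim\langle\omega\rangle^{-\alpha}$ directly to produce one contribution, while on the outer tail a single integration by parts moves a derivative onto $f$ and pulls out a factor of $|a|^{-1}$, yielding a boundary term evaluated at $|\omega|=1/|a|$ together with a remaining integral of $f'=\O(\langle\omega\rangle^{-\alpha-1})$ over $|\omega|>1/|a|$. The two pieces are then combined, with the placement of the cutoff at $1/|a|$ being exactly what allows the two contributions to balance and to combine into the stated $|a|$-power.

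The main obstacle is the regime $0<\alpha\leq 1$ paired with small $|a|$, where the symbol decays too weakly to be absolutely integrable and the oscillatory cancellation cannot be dispensed with. In particular, this is where the full symbol notation $\O(\langle\omega\rangle^{-\alpha})$ -- as opposed to mere pointwise decay -- enters critically, as the integration by parts on the tail and the control of the boundary term at the cutoff both rely on the derivative bounds. Care must also be taken to avoid logarithmic losses exactly at $\alpha=1$, which should be absorbed by comparing the final estimate to $\langle a\rangle^{-2}$ in the regime $|a|\to 0$.
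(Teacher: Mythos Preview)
The paper gives no proof of its own here; it simply cites \cite[Lemma~4.2]{DonRao20}. Your proposed strategy---repeated integration by parts for $|a|\geq 1$ and, for $|a|<1$, splitting the $\omega$-integral at $|\omega|=1/|a|$ with a single integration by parts on the tail---is exactly the standard argument for such oscillatory integrals and is the method used in the cited reference.

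However, you did not catch that the printed exponent is a typo: the bound should read $|a|^{\alpha-1}\langle a\rangle^{-2}$, not $|a|^{1-\alpha}\langle a\rangle^{-2}$. This is visible from the only place the lemma is invoked, the proof of Lemma~\ref{lem:strichartz 1}, where it is applied with $\alpha=1-\tfrac{d-3}{4d}\in(0,1)$ and produces the factor $|\,\cdot\,|^{-\frac{d-3}{4d}}=|\,\cdot\,|^{\alpha-1}$. The statement as written is in fact false for $\alpha<1$: with $f(\omega)=\langle\omega\rangle^{-\alpha}$ the integral diverges like $|a|^{\alpha-1}$ as $a\to 0$, so it cannot be bounded by $|a|^{1-\alpha}\to 0$. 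If you carry out your own small-$|a|$ computation honestly, each of the three pieces---the inner integral $\int_{|\omega|\leq 1/|a|}\langle\omega\rangle^{-\alpha}\,d\omega$, the boundary term $|a|^{-1}\langle 1/|a|\rangle^{-\alpha}$, and the tail $|a|^{-1}\int_{|\omega|>1/|a|}\langle\omega\rangle^{-\alpha-1}\,d\omega$---is of size $|a|^{\alpha-1}$, not $|a|^{1-\alpha}$; so your method proves the corrected lemma, just not the one printed. Correspondingly, for $|a|\geq 1$ the corrected target is $|a|^{\alpha-3}$, which requires $k\geq 3-\alpha$ integrations by parts (so $k=3$ suffices), slightly more than the $k>1+\alpha$ you proposed. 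Finally, your remark about absorbing a logarithmic loss at $\alpha=1$ into $\langle a\rangle^{-2}$ cannot work, since $\langle a\rangle^{-2}\to 1$ as $a\to 0$; the logarithm is genuinely present there, and indeed the lemma in \cite{DonRao20} is stated for $\alpha\in(0,1)$.
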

\begin{proof}
	See Lemma 4.2 in \cite{DonRao20}.
\end{proof}
\begin{lem}\label{osci3}
The estimate
	\begin{align*}
	\left|\int_\R e^{i a\omega}(1-\chi_\lambda(\rho))\O\left(\rho^{-n}\langle\omega\rangle^{-(n+1)}\right) d \omega \right|\lesssim \langle a\rangle^{-2}
	\end{align*}
	holds for all $n\geq 1$, $\rho \in (0,1)$, and $a\in \R \setminus\{0\}$.
\end{lem}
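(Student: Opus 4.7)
The plan is to split the analysis into two regions depending on the size of $\rho$ relative to the fixed constant $\rho_0$ entering the definition of $\rho_\lambda$, and, in the small-$\rho$ region, to trade decay in $\omega$ against decay in $\rho$ by exploiting the $\omega$-support of the cutoff $1-\chi_\lambda(\rho)$.

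For $\rho\geq\rho_0$ the factor $\rho^{-n}$ is bounded by the absolute constant $\rho_0^{-n}$, and since $\chi_\lambda$ is a symbol of order zero in $\omega$ (from the hypothesis $|\partial_\omega^\ell\chi_\lambda(\rho)|\lesssim\langle\omega\rangle^{-\ell}$), the product $(1-\chi_\lambda(\rho))\O(\rho^{-n}\langle\omega\rangle^{-(n+1)})$ is still $\O(\langle\omega\rangle^{-(n+1)})$ as a symbol in $\omega$. Lemma \ref{osci1} applied with $\alpha=n\geq 1$ then immediately gives the bound $\langle a\rangle^{-2}$.

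For $0<\rho<\rho_0$ the key observation is that the definitions $\rho_\lambda=\min\{\rho_0,\,r/|a(\lambda)|\}$ and $\widehat{\rho}_\lambda=\min\{(1+\rho_0)/2,\,2r/|a(\lambda)|\}$, together with $|a(\lambda)|\simeq\langle\omega\rangle$, force $r/|a(\lambda)|\lesssim\rho$, i.e.\ $\langle\omega\rangle\gtrsim\rho^{-1}$, on the $\omega$-support of $1-\chi_\lambda(\rho)$. On this support, since $n\geq 1$,
\[
\int_{\R}\Bigl|(1-\chi_\lambda(\rho))\,\O\bigl(\rho^{-n}\langle\omega\rangle^{-(n+1)}\bigr)\Bigr|\,d\omega
\;\lesssim\; \rho^{-n}\!\!\int_{|\omega|\gtrsim \rho^{-1}}\!\langle\omega\rangle^{-(n+1)}\,d\omega
\;\lesssim\; 1,
\]
which handles the range $|a|\leq 1$ trivially because $\langle a\rangle^{-2}\simeq 1$ there.

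For $|a|\geq 1$ I would integrate by parts twice in $\omega$. Setting $F(\omega):=(1-\chi_\lambda(\rho))\,\O(\rho^{-n}\langle\omega\rangle^{-(n+1)})$, the boundary terms vanish because $F(\omega)\to 0$ as $|\omega|\to\infty$, so
\[
\int_{\R} e^{ia\omega}F(\omega)\,d\omega \;=\; -\frac{1}{a^{2}}\int_{\R} e^{ia\omega}F''(\omega)\,d\omega.
\]
Each $\omega$-derivative landing on the $\O$-symbol contributes an extra factor of $\langle\omega\rangle^{-1}$, and the hypothesis $|\partial_\omega^\ell\chi_\lambda(\rho)|\lesssim\langle\omega\rangle^{-\ell}$ provides exactly the same gain whenever the derivative lands on the cutoff. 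Hence $|F''(\omega)|\lesssim \rho^{-n}\langle\omega\rangle^{-(n+3)}$ on the $\omega$-support identified above, and using $\rho^{-n}\lesssim\langle\omega\rangle^{n}$ there one gets $|F''(\omega)|\lesssim \langle\omega\rangle^{-3}$, which is integrable on $\R$. The resulting bound is $|a|^{-2}\lesssim \langle a\rangle^{-2}$, and combining with the previous step completes the estimate.

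The main subtlety I expect is the careful book-keeping of the $\omega$-support of $1-\chi_\lambda(\rho)$ through the transition region $\rho_\lambda\leq \rho \leq \widehat{\rho}_\lambda$ and the verification that derivatives of the cutoff fit neatly with the symbol gains from the $\O$-term; once this is pinned down, the proof reduces to the two familiar estimates of a direct $L^1$ bound for small $|a|$ and a double integration by parts for large $|a|$.
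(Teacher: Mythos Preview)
Your proof is correct and follows essentially the same strategy as the referenced Lemma 4.3 in \cite{DonRao20}: exploit that on the $\omega$-support of $1-\chi_\lambda(\rho)$ one has $\langle\omega\rangle\gtrsim\rho^{-1}$, which converts the dangerous factor $\rho^{-n}$ into extra $\omega$-decay, and then apply the standard $\langle a\rangle^{-2}$ bound via absolute integrability for $|a|\lesssim 1$ and two integrations by parts for $|a|\gtrsim 1$. The case distinction $\rho\geq\rho_0$ versus $\rho<\rho_0$ and the bookkeeping of the symbol behaviour of $\partial_\omega^\ell\chi_\lambda$ are handled correctly.
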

\begin{proof}
	This can be proven in the same manner as Lemma 4.3 in \cite{DonRao20}
\end{proof}
Finally, we need one more lemma on oscillatory integrals.
\begin{lem}\label{osci4}
The estimate
	$$
	\left|\int_\R e^{i a\omega}(1-\chi_\lambda(\rho))\O\left(\rho^{-n}\langle\omega\rangle^{-n}\right)d\omega\right|\lesssim|a|^{-1}\langle a\rangle^{-2}
	$$
holds for all $n \geq 2$, $\rho \in (0,1)$, and $a\in \R \setminus\{0\}$.
\end{lem}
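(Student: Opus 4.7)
The plan is to integrate by parts in $\omega$ a suitable number of times and to exploit the fact that the factor $(1-\chi_\lambda(\rho))$ restricts the effective range of integration to $|\omega|\gtrsim 1/\rho$. Writing $\eta(\omega):=1-\chi_\lambda(\rho)$ (viewed as a function of $\omega$ for fixed $\rho$) and letting $\phi(\rho,\omega)$ denote the $\O(\rho^{-n}\langle\omega\rangle^{-n})$ symbol, the integral to bound is
$$I(a,\rho):=\int_\R e^{ia\omega}\eta(\omega)\phi(\rho,\omega)\,d\omega.$$
By the defining properties of $\chi_\lambda$ and $\rho_\lambda$, $\eta$ vanishes for $|\omega|\lesssim 1/\rho$; its $\omega$-derivatives satisfy $|\eta^{(k)}(\omega)|\lesssim \langle\omega\rangle^{-k}$, and for $k\geq 1$ the support of $\eta^{(k)}$ is confined to the transition region $|\omega|\sim 1/\rho$. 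By symbol calculus one also has $|\partial_\omega^k\phi|\lesssim \rho^{-n}\langle\omega\rangle^{-n-k}$.

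First I would integrate by parts $k$ times in $\omega$; since $n\geq 2$ the boundary terms vanish, and
$$I(a,\rho)=\frac{(-1)^k}{(ia)^k}\int_\R e^{ia\omega}\partial_\omega^k\bigl(\eta(\omega)\phi(\rho,\omega)\bigr)\,d\omega.$$
The Leibniz rule expresses $\partial_\omega^k(\eta\phi)$ as a sum of terms $\binom{k}{j}\eta^{(j)}\partial_\omega^{k-j}\phi$. For the $j=0$ term the factor $\eta$ restricts integration to $|\omega|\gtrsim 1/\rho$, giving
$$\int_\R|\eta\,\partial_\omega^k\phi|\,d\omega\lesssim \rho^{-n}\int_{|\omega|\gtrsim 1/\rho}\langle\omega\rangle^{-n-k}\,d\omega\lesssim \rho^{k-1},$$
valid for $k\geq 1$. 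For $j\geq 1$ the factor $\eta^{(j)}$ confines the integrand to $|\omega|\sim 1/\rho$, and
$$\int_\R|\eta^{(j)}\partial_\omega^{k-j}\phi|\,d\omega\lesssim \rho^{-n}\int_{|\omega|\sim 1/\rho}\langle\omega\rangle^{-j}\langle\omega\rangle^{-n-(k-j)}\,d\omega\lesssim \rho^{k-1}.$$
Summing and using $\rho\in(0,1)$ yields $|I(a,\rho)|\lesssim |a|^{-k}$ uniformly in $\rho$ for every $k\geq 1$.

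Applying this with $k=1$ gives $|I(a,\rho)|\lesssim |a|^{-1}$, which supplies the desired bound in the regime $|a|\leq 1$, while $k=3$ gives $|I(a,\rho)|\lesssim |a|^{-3}$, covering $|a|\geq 1$; the two bounds together yield $|I(a,\rho)|\lesssim |a|^{-1}\langle a\rangle^{-2}$, as required. The main technical obstacle is the careful bookkeeping of support properties and symbol estimates needed to show that, for every $k\geq 1$ and every $j\in\{0,\dots,k\}$, the $\rho^{-n}$ loss in $\phi$ is exactly compensated by the $\rho^{n+k-1}$ gain from integrating $\langle\omega\rangle^{-n-k}$ over the region $|\omega|\gtrsim 1/\rho$ dictated by the cutoff; this balance is what ensures the bound stays uniform in $\rho$ as $\rho\to 0^+$, and it also explains why Lemma \ref{osci3}, in which the $\omega$-decay is one power better, allowed us to bypass the singular $|a|^{-1}$ factor altogether.
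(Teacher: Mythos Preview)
Your proof is correct and follows the standard integration-by-parts approach for oscillatory integrals with symbolic integrands, which is precisely the method the paper invokes by reference to Lemma 4.4 in \cite{DonRao20}. The key mechanism---that the cutoff $1-\chi_\lambda(\rho)$ forces $\rho\langle\omega\rangle\gtrsim 1$ on its support, so that the $\rho^{-n}$ loss is compensated by the gain from integrating $\langle\omega\rangle^{-n-k}$ over $|\omega|\gtrsim 1/\rho$---is exactly what drives the argument, and you have handled the bookkeeping cleanly.
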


\begin{proof}
This can be proven as Lemma 4.4 in \cite{DonRao20}.
\end{proof}
We will also utilise of the following technical lemmas.
\begin{lem}\label{teclem1}
The estimate
\begin{align*}
\left\||.|^{-1}f\right\|_{L^{2}(\B^d_1)}\lesssim \|f\|_{H^1(\B^d_1)}
\end{align*}
holds for all $f\in C^1(\overline{\B^d_1})$. 
\end{lem}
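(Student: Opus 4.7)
The plan is to reduce the inequality to the classical Hardy inequality on $\R^d$ via a Sobolev extension, which is available because $d\geq 3$ throughout the paper. First, I would invoke the existence of a bounded linear extension operator $E\colon H^1(\B^d_1)\to H^1(\R^d)$ (for instance the one already employed in Lemma \ref{lem:extension} when $T=1$, or any standard Sobolev extension for the smooth domain $\B^d_1$). For $f\in C^1(\overline{\B^d_1})$ we then have $\|Ef\|_{H^1(\R^d)}\lesssim \|f\|_{H^1(\B^d_1)}$, and by restriction
\[
\bigl\||.|^{-1}f\bigr\|_{L^2(\B^d_1)}\leq \bigl\||.|^{-1}Ef\bigr\|_{L^2(\R^d)}.
\]

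The main step is then the classical Hardy inequality on $\R^d$: for $d\geq 3$,
\[
\bigl\||.|^{-1}g\bigr\|_{L^2(\R^d)}\leq \frac{2}{d-2}\|\nabla g\|_{L^2(\R^d)}
\]
for all $g\in H^1(\R^d)$. Applied with $g=Ef$, this yields
\[
\bigl\||.|^{-1}f\bigr\|_{L^2(\B^d_1)}\lesssim \|\nabla(Ef)\|_{L^2(\R^d)}\lesssim \|Ef\|_{H^1(\R^d)}\lesssim \|f\|_{H^1(\B^d_1)},
\]
which is the desired estimate.

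There is essentially no obstacle here; the only mildly delicate point is that the classical Hardy inequality fails in dimensions $d\leq 2$, but this is harmless since the paper works throughout under the standing assumption $d\geq 3$. As an alternative one could give a self-contained proof by writing $|x|^{-2}=\frac{1}{d-2}\nabla\cdot\bigl(x/|x|^2\bigr)$, integrating by parts against $|f|^2$, bounding the boundary term on $\partial\B^d_1$ by the trace inequality $\|f\|_{L^2(\partial\B^d_1)}^2\lesssim \|f\|_{H^1(\B^d_1)}^2$, and absorbing the resulting cross term via Cauchy--Schwarz and Young's inequality; but the extension argument above is the shortest route.
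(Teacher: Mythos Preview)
Your proof is correct and gives a clean alternative to the paper's argument. The paper proceeds differently: it works directly with the radial representation and performs a one-dimensional integration by parts,
\[
\bigl\||.|^{-1}f\bigr\|_{L^2(\B^d_1)}^2=\int_0^1|f(\rho)|^2\rho^{d-3}\,d\rho=\frac{1}{d-2}|f(1)|^2-\frac{2}{d-2}\int_0^1\Re\bigl(f'(\rho)\overline{f(\rho)}\bigr)\rho^{d-2}\,d\rho,
\]
then splits the cross term via Young's inequality as $\frac{1}{2\varepsilon}\|f\|_{\dot H^1(\B^d_1)}^2+\frac{\varepsilon}{2}\bigl\||.|^{-1}f\bigr\|_{L^2(\B^d_1)}^2$ and absorbs the second piece into the left-hand side for small $\varepsilon$, while the boundary term $|f(1)|^2$ is controlled by $\|f\|_{H^1(\B^d_1)}^2$. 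This is precisely the self-contained alternative you sketched at the end of your proposal, carried out in radial coordinates. Your extension-plus-Hardy route is modular and even works for nonradial $f$; the paper's route is entirely self-contained and avoids invoking an extension theorem. Either is perfectly adequate here.
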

\begin{proof}
An integration by parts shows that
\begin{align*}
\left\||.|^{-1}f\right\|_{L^{2}(\B^d_1)}^2&=\int_0^1  |f(\rho)|^2 \rho^{d-3} d\rho \lesssim |f(1)|^2+\int_0^1  |f'(\rho)| |f(\rho)| \rho^{d-2} d\rho
\\
&
\lesssim \frac{1}{\varepsilon} \|f\|_{\dot{H}^1(\B^d_1)}+\varepsilon \left\||.|^{-1}f\right\|_{L^{2}(\B^d_1)}^2
\end{align*}
and so the claim follows from an appropriate choice of $\varepsilon$.
\end{proof}
\begin{lem}\label{teclem2}
The estimates
\begin{align*}
\||.| f\|_{L^{\frac{2d}{d-3}}(\B^d_1)}\lesssim \|f\|_{H^1(\B^d_1)}
\end{align*}
and 
\begin{align*}
\left\|\int_\rho^1 s f'(s)ds\right\|_{L^{\frac{2d}{d-3}}_\rho(\B^d_1)} \lesssim \|f\|_{H^1(\B^d_1)}
\end{align*}
hold for all $f\in C^1(\overline{\B^d_1})$. 
\end{lem}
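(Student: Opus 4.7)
Both estimates will be reduced to a pointwise Strauss/Ni-type decay bound that comes directly from the fundamental theorem of calculus plus Cauchy--Schwarz in the radial measure $s^{d-1}\,ds$, together with the fact that the (radial) trace operator onto $\partial\B^d_1$ is bounded on $H^1(\B^d_1)$. Throughout, recall that the setting in Section~4 is $d\ge4$, so the weighted integrals of negative powers of $\rho$ that appear below are always controlled.

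For the first estimate, I would write $f(\rho)=f(1)-\int_\rho^1 f'(s)\,ds$ and apply Cauchy--Schwarz in the measure $s^{d-1}\,ds$ to obtain
\[
\int_\rho^1 |f'(s)|\,ds \leq \|f\|_{\dot H^1(\B^d_1)} \left(\int_\rho^1 s^{-(d-1)}\,ds\right)^{1/2} \lesssim \rho^{(2-d)/2}\|f\|_{\dot H^1(\B^d_1)}
\]
for $d\ge4$. Combined with the radial trace bound $|f(1)|\lesssim \|f\|_{H^1(\B^d_1)}$, this gives
\[
\rho|f(\rho)| \lesssim \rho|f(1)| + \rho^{(4-d)/2}\|f\|_{H^1(\B^d_1)}.
\]
Raising to the power $\frac{2d}{d-3}$ and integrating against $\rho^{d-1}\,d\rho$ reduces matters to verifying $\int_0^1 \rho^{(4-d)d/(d-3)+d-1}\,d\rho = \int_0^1 \rho^{3/(d-3)}\,d\rho<\infty$, which holds for every $d\ge4$.

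For the second estimate, I would apply Cauchy--Schwarz directly by pairing $s^{(d-1)/2}f'(s)$ with $s^{(3-d)/2}$:
\[
\left|\int_\rho^1 s f'(s)\,ds\right|\leq \|f\|_{\dot H^1(\B^d_1)}\left(\int_\rho^1 s^{3-d}\,ds\right)^{1/2}.
\]
For $d\ge5$ this yields the pointwise bound $\lesssim \rho^{(4-d)/2}\|f\|_{\dot H^1}$, whose $L^{2d/(d-3)}(\B^d_1)$-norm is finite by exactly the same computation as above. In the borderline case $d=4$ the estimate degenerates to $\lesssim (\log(1/\rho))^{1/2}\|f\|_{\dot H^1}$, and one finishes by noting that $\int_0^1 (\log(1/\rho))^{4}\rho^{3}\,d\rho<\infty$, since any power of a logarithm is integrable against a strictly positive power of $\rho$.

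There is no genuine obstacle here: the whole argument is a direct application of Cauchy--Schwarz in a weighted $L^2$-space plus a boundary trace. The only mildly delicate point is the separate treatment of the logarithmic case $d=4$ in the second estimate, but this is resolved by a trivial convergence check.
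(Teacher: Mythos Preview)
Your argument is correct and is genuinely different from the paper's. The paper proceeds by a dimension-reduction trick: setting $n=\lceil d/2\rceil$, it rewrites the radial $L^{\frac{2d}{d-3}}(\B^d_1)$ norm as a weighted norm on $\B^n_1$, compares it with $L^{\frac{2n}{n-2}}(\B^n_1)$, applies the Sobolev embedding $H^1(\B^n_1)\hookrightarrow L^{\frac{2n}{n-2}}(\B^n_1)$, and then controls the resulting $H^1(\B^n_1)$ norm by $\|f\|_{H^1(\B^d_1)}$ with the help of Lemma~\ref{teclem1}. Your route is more elementary: a direct Strauss/Ni-type pointwise bound from the fundamental theorem of calculus and Cauchy--Schwarz, followed by an explicit integrability check of the resulting weight. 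What your approach buys is that it avoids both the dimension shift and the appeal to Sobolev embedding altogether; the paper's approach in turn avoids the case distinction at $d=4$ (though at the cost of having to interpret the endpoint embedding $H^1(\B^2_1)\hookrightarrow L^q(\B^2_1)$, $q<\infty$, there). Both proofs ultimately rest on the same radial Hardy-type control encoded in Lemma~\ref{teclem1} or the trace bound $|f(1)|\lesssim\|f\|_{H^1(\B^d_1)}$.
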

\begin{proof}
Let $n=\lceil\frac{d}{2}\rceil$. Then, from the Sobolev embedding $H^1(\B^n)\hookrightarrow L^{\frac{2n}{n-2}}(\B^n_1)$ we infer that
\begin{align*}
||.|f\|_{L^{\frac{2d}{d-3}}(\B^d_1)}&=\left(\int_0^1 \rho^{n-1}\left(|f(\rho)|\rho^{1+(d-n)\frac{d-3}{2d}}\right)^{\frac{2d}{d-3}} d\rho \right)^{\frac{d-3}{2d}}=\||.|^{1+(d-n)\frac{d-3}{2d}}
f\|_{L^{\frac{2d}{d-3}}(\B^n_1)}
\\
&\leq \||.|^{1+(d-n)\frac{d-3}{2d}}
f\|_{L^{\frac{2n}{n-2}}(\B^n_1)} \lesssim \||.|^{1+(d-n)\frac{d-3}{2d}}
f\|_{H^1(\B^n_1)}.
\end{align*}
Further,
\begin{align*}
 \||.|^{1+(d-n)\frac{d-3}{2d}}
f\|_{L^2(\B^n_1)}= \int_0^1|f(\rho)|^2 \rho^{2+(d-n)\frac{d-3}{d}} \rho^{n-1} d\rho\leq \int_0^1|f(\rho)|^2 \rho^{d-1} d\rho =\|f\|_{L^2(\B^d_1)}
\end{align*}
as well as
\begin{align*}
 \||.|^{1+(d-n)\frac{d-3}{2d}}
f\|_{\dot{H}^1(\B^n_1)}&\lesssim\|f\|_{\dot{H}^1(\B^d_1)}+\||.|^{(d-n)\frac{d-3}{2d}}
f\|_{L^2(\B^n_1)}
\\
&\lesssim \|f\|_{H^1(\B^d_1)}+ \||.|^{-1}f\|_{L^2(\B^d_1)} \lesssim\|f\|_{H^1(\B^d_1)},
\end{align*}
by Lemma \ref{teclem1}. For the second one we argue similarly to obtain that
\begin{align*}
\left\|\int_\rho^1 s f'(s)ds\right\|_{L^{\frac{2d}{d-3}}_\rho(\B^d_1)} &\lesssim \left\|\rho^{(d-n)\frac{d-3}{2d}}\int_\rho^1 s f'(s)ds\right\|_{H^1_\rho(\B^n_1)}.
\end{align*}
Observe that
\begin{align*}
\left\|\rho^{(d-n)\frac{d-3}{2d}}\int_\rho^1 s f'(s)ds\right\|_{L^2_\rho(\B^n_1)}\lesssim\left\|\rho^{\frac{1-n}{2}+\frac{1}{4}}\int_\rho^1 s^{\frac{d-1}{2}} f'(s)ds\right\|_{L^2_\rho(\B^n_1)}\lesssim \|f\|_{H^1(\B^d_1)}
\end{align*}
and 
\begin{align*}
\left\|\rho^{(d-n)\frac{d-3}{2d}}\int_\rho^1 s f'(s)ds\right\|_{\dot{H}^1_\rho(\B^n_1)}
&\lesssim \|f\|_{H^1(\B^d_1)}
+\left\|\rho^{\frac{1-n}{2}-\frac{3}{4}}\int_\rho^1 s^{\frac{d-1}{2}} f'(s)ds\right\|_{L^2_\rho(\B^n_1)}.
\end{align*}
Lastly, we use the Cauchy Schwarz inequality to estimate that
\begin{align*}
\left\|\rho^{\frac{1-n}{2}-\frac{3}{4}}\int_\rho^1 s^{\frac{d-1}{2}} f'(s)ds\right\|_{L^2_\rho(\B^n_1)}
&\leq \left\|\rho^{\frac{1-n}{2}-\frac{7}{20}}\int_\rho^1 s^{\frac{d-1}{2}-\frac{2}{5}} f'(s)ds\right\|_{L^2_\rho(\B^n_1)} 
\\
& \lesssim \|f\|_{H^1(\B^d_1)}\||.|^{\frac{1-n}{2}-\frac{7}{20}}\|_{L^2(\B^n_1)}\left(\int_0^1 s^{-\frac{4}{5}}ds\right)^{\frac{1}{2}}
\\
&\lesssim \|f\|_{H^1(\B^d_1)}.
\end{align*}
\end{proof}
Moreover, we will make use of the following two Lemmas, proven in \cite{DonWal22a}.
\begin{lem} [{\cite[Lemma 5.6]{DonWal22a}}] \label{teclem4}
	Let $\alpha \in (0,1)$. Then the estimate
	\begin{align*}
	\int_0^1s^{\alpha-1}|a+\log(1\pm s)|^{-\alpha}| ds\lesssim |a|^{-\alpha}
	\end{align*}
	 holds for all $a\in \R\setminus\{0\}$.
\end{lem}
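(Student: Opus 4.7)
My plan is to treat the $+$ and $-$ cases of $\log(1\pm s)$ separately and in each to reduce the problem to an integral of the form $\int b^{\alpha-1}|a+b|^{-\alpha}\,db$ via an exponential change of variable. For the $+$ case I would substitute $b=\log(1+s)\in[0,\log 2]$; then $s=e^b-1\asymp b$ on this compact interval, so $s^{\alpha-1}ds\asymp b^{\alpha-1}db$ and the whole integral reduces, up to multiplicative constants, to $\int_0^{\log 2}b^{\alpha-1}|a+b|^{-\alpha}\,db$. For the $-$ case I would set $u=-\log(1-s)\in[0,\infty)$, giving $s=1-e^{-u}$, $ds=e^{-u}du$, and split at $u=1$: on $[0,1]$ the bound $1-e^{-u}\asymp u$ produces $\int_0^1 u^{\alpha-1}|a-u|^{-\alpha}e^{-u}du$, of the same shape as above, while $[1,\infty)$ leaves a tail $\int_1^\infty |a-u|^{-\alpha}e^{-u}du$.

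Next I would dispose of the principal integral $\int_0^L b^{\alpha-1}|a+b|^{-\alpha}\,db$. For $|a|$ large (say $|a|\geq 2L$) the bound $|a+b|^{-\alpha}\lesssim |a|^{-\alpha}$ is uniform on the range and the desired estimate is immediate after integrating $b^{\alpha-1}$. For $|a|$ small I would rescale by $b=|a|t$; the Jacobian and exponents cancel, leaving the scale-free integral $\int_0^{L/|a|}t^{\alpha-1}|\sigma+t|^{-\alpha}dt$ with $\sigma=\mathrm{sgn}(a)\in\{\pm 1\}$. I would split this into $[0,1/2]$, $[1/2,2]$, $[2,L/|a|]$: on the first, $|\sigma+t|\geq 1/2$ makes the integral $O(1)$; on the second, $t^{\alpha-1}$ is bounded and the singularity at $t=\pm 1$ is integrable thanks to $\alpha<1$; on the third, $|\sigma+t|\geq t/2$ gives integrand $\lesssim t^{-1}$ and hence a contribution of at most $\log(L/(2|a|))$. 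The elementary observation $\log(1/|a|)\lesssim |a|^{-\alpha}$ as $|a|\to 0^+$, which holds for any $\alpha>0$, then packages everything under $|a|^{-\alpha}$.

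The tail $\int_1^\infty |a-u|^{-\alpha}e^{-u}du$ is handled by cases on $a$: for $a\leq 1$ the denominator stays bounded below on a full-measure subset of $[1,\infty)$ and the integral is $O(1)$, absorbed into $|a|^{-\alpha}$ either trivially (for $|a|$ of order one) or for free (for small $|a|$); for $a>1$ the translation $v=u-a$ yields $e^{-a}\int_{1-a}^\infty |v|^{-\alpha}e^{-v}dv$, whose factor $e^{-a}$ dominates any polynomial growth in $a$, giving a bound much stronger than $a^{-\alpha}$.

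The main technical obstacle is that no single pointwise bound on the integrand produces $|a|^{-\alpha}$ in every regime—the three-region split after rescaling is genuinely needed, and the delicate point is exactly that the logarithmic term emerging from the far region is just barely controlled by $|a|^{-\alpha}$ because $\alpha$ is strictly positive. A secondary bookkeeping nuisance is tracking the sign of $a$ together with the location of the singularity of $|a+\log(1\pm s)|^{-\alpha}$, but once both cases are reduced to $\int t^{\alpha-1}|\sigma+t|^{-\alpha}dt$ with $\sigma\in\{\pm 1\}$ the sign ceases to matter.
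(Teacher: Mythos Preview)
The paper does not prove this lemma; it merely cites \cite[Lemma 5.6]{DonWal22a}. So there is no in-paper argument to compare against, and your overall scheme---exponential change of variables followed by a scaling argument on $\int_0^L b^{\alpha-1}|a\pm b|^{-\alpha}\,db$---is a perfectly reasonable way to produce a self-contained proof.

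There is, however, a genuine error in your treatment of the tail $\int_1^\infty |a-u|^{-\alpha}e^{-u}\,du$ for $a>1$. After the translation $v=u-a$ you obtain $e^{-a}\int_{1-a}^\infty |v|^{-\alpha}e^{-v}\,dv$ and assert that the remaining integral has only polynomial growth in $a$. This is false: on $v\in[1-a,-1]$ one has $e^{-v}\ge e$, and near $v=1-a$ the integrand is of order $(a-1)^{-\alpha}e^{a-1}$, so the integral grows like $e^{a}$, not polynomially. The product with $e^{-a}$ therefore only gives an $O(1)$ bound, which is \emph{weaker} than the required $a^{-\alpha}$ when $a\to\infty$. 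The fix is easy: split the original tail at $u=a/2$ and $u=2a$. On $[1,a/2]$ one has $|a-u|\ge a/2$, giving the contribution $\lesssim a^{-\alpha}$ directly; on $[a/2,2a]$ use $e^{-u}\le e^{-a/2}$ together with $\int_{a/2}^{2a}|a-u|^{-\alpha}\,du\lesssim a^{1-\alpha}$; on $[2a,\infty)$ use $|a-u|\ge u/2$ and the exponential tail. Each piece is then $\lesssim a^{-\alpha}$.

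A smaller sloppiness: your case ``$a\le 1$'' for the tail only explicitly addresses $|a|$ small or of order one, not $a\to-\infty$. That regime is in fact trivial (for the minus sign one has $|a+\log(1-s)|\ge|a|$ whenever $a<0$, so the full integral is immediately $\le|a|^{-\alpha}/\alpha$), but you should say so.
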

\begin{lem} [{\cite[Lemma 5.7]{DonWal22a}}]\label{teclem5}
The estimate
	\begin{align*}
	\int_0^1 s^{-\frac{1}{2}}\left|a\pm \frac{1}{2}\log(1-s^2) \right|^{-\frac{1}{2}} ds\lesssim |a|^{-\frac{1}{2}}
	\end{align*}
	holds for all $a\in \R\setminus\{0\}$.
\end{lem}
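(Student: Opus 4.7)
The plan is to first reduce to the only genuinely interesting case by using symmetry. Observe that replacing $a$ by $-a$ swaps the two signs in $|a \pm \tfrac12\log(1-s^2)|$, while $|a|^{-1/2}=|{-a}|^{-1/2}$, so without loss of generality I may assume $a>0$ and prove the bound for both sign choices in that regime. For the sign choice $|a - \tfrac12 \log(1-s^2)|$, the two summands are both nonnegative (since $-\tfrac12\log(1-s^2)\geq 0$), hence $|a - \tfrac12\log(1-s^2)| \geq a$ on all of $[0,1)$, and the bound follows immediately from $\int_0^1 s^{-1/2} ds <\infty$.

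The nontrivial case is $|a + \tfrac12\log(1-s^2)|$, whose integrand becomes singular at the unique point $s^*=\sqrt{1-e^{-2a}}\in(0,1)$. I would perform the substitution $t = -\tfrac12\log(1-s^2)$, so that $s = \sqrt{1 - e^{-2t}}$ and a direct calculation yields
\[ s^{-1/2}\,ds = \frac{e^{-2t}}{(1-e^{-2t})^{3/4}}\,dt. \]
This reduces the problem to establishing
\[ I(a) := \int_0^\infty \frac{e^{-2t}}{(1-e^{-2t})^{3/4}} \, |a-t|^{-1/2}\, dt \lesssim a^{-1/2}. \]

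The plan is then to split into the cases $0 < a \leq 1$ and $a > 1$. For $a \leq 1$, near $t = 0$ the Jacobian satisfies $\frac{e^{-2t}}{(1-e^{-2t})^{3/4}} \lesssim t^{-3/4}$, and for $t\geq 1$ it decays exponentially; on the first piece the rescaling $t = a\sigma$ together with the convergence of $\int_0^\infty \sigma^{-3/4}|1-\sigma|^{-1/2}\,d\sigma$ yields $\int_0^1 t^{-3/4}|a-t|^{-1/2}dt \lesssim a^{-1/4}$, which is dominated by $a^{-1/2}$ since $a \leq 1$. The tail contribution is $O(1)\leq O(a^{-1/2})$. For $a > 1$, I would split the range of $t$ into $[0,a/2]$, $[a/2, 2a]$, $[2a,\infty)$. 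On the first, $|a-t|\geq a/2$ combined with the uniform bound $\int_0^\infty (1-e^{-2t})^{-3/4}e^{-2t}\,dt<\infty$ (readily checked via $\tau = e^{-2t}$) gives $O(a^{-1/2})$. On the middle and far pieces, the factor $e^{-2t} \leq e^{-a}$ produces exponentially small contributions, which are of course $O(a^{-1/2})$ and in fact far smaller.

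The hard part is managing the simultaneous occurrence of the endpoint singularity $t^{-3/4}$ (coming from the Jacobian) and the cancellation singularity $|a-t|^{-1/2}$ (coming from the potential root of $a+\tfrac12\log(1-s^2)$) when $a\leq 1$, since both singularities lie in the same bounded window of integration. The scaling $t=a\sigma$ resolves this because both singularities are separately integrable against $d\sigma$, yielding the clean exponent $a^{-1/4}$, which is even stronger than the required $a^{-1/2}$ in the relevant regime.
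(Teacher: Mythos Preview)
Your proof is correct. The paper does not actually prove this lemma; it merely cites \cite[Lemma~5.7]{DonWal22a}, so there is no in-paper argument to compare against. Your approach---reducing by the $a\mapsto -a$ symmetry, dispatching the trivial sign by monotonicity, and for the remaining sign performing the substitution $t=-\tfrac12\log(1-s^2)$ followed by the scaling $t=a\sigma$ for small $a$ and a dyadic split around $t=a$ for large $a$---is a clean, self-contained proof that handles all the singularities with the right exponents.

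One small remark for polish: in the case $0<a\leq 1$, when you bound the tail $\int_1^\infty e^{-2t}|a-t|^{-1/2}\,dt$ by $O(1)$, note that the singularity $t=a$ can touch the left endpoint $t=1$ when $a=1$; this is harmless because $|a-t|^{-1/2}$ is locally integrable, but it is worth a word to avoid the impression that you are using $|a-t|\gtrsim 1$ on the tail (which fails at $a=1$, $t=1$). Splitting the tail once more at $t=2$ and using $\int_1^2|a-t|^{-1/2}\,dt\leq 4$ uniformly in $a\in(0,1]$ makes this airtight.
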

With these technical results out of the way, we continue by establishing bounds on the operators $T_j$.
\begin{lem}\label{lem:strichartz 1}
Let $p\in [2,\infty]$ and $ q\in [\frac{2d}{d-2},\frac{2d}{d-3}]$ be such that
$\frac{1}{p}+\frac{d}{q}=\frac{d}{2}-1$.
Then the operators $T_j$ satisfy
\begin{align*}
\|T_j(.)f\|_{L^p(\R_+)L^q(\B^d_1)}\lesssim \|f\|_{L^2(\B^d_1)}
\end{align*}
for all $f \in C^1(\overline{\B^d_1})$.
\end{lem}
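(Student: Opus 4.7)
The plan is to treat each $T_j$, $j=1,\dots,8$, separately and then obtain the full range of pairs $(p,q)$ by interpolating between the two endpoints $(\infty,\tfrac{2d}{d-2})$ and $(2,\tfrac{2d}{d-3})$, exactly as in the proof of Lemma~\ref{lem: free strichartz}. The $L^\infty_\tau L^{\frac{2d}{d-2}}_\rho$ endpoint is the easy one: for $\Re\lambda=0$ the factors $(1\pm\rho)^{\pm\lambda/2}$ and $(1\pm s)^{\pm\lambda/2}$ have modulus one, so the polynomial $\omega$-decay stated in Lemma~\ref{lem:kernel symb} together with Lemma~\ref{lem: interchange} yields a uniform-in-$\tau$ bound of $T_j(\tau)f$ in $H^1(\B^d_1)$ followed by Sobolev embedding. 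The substance of the argument therefore lies in the dispersive endpoint $(2,\tfrac{2d}{d-3})$.

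For this endpoint I write, for each $j$,
\begin{equation*}
\widetilde G_j(\rho,s,i\omega)=e^{i\omega\phi_j(\rho,s)}\,m_j(\rho,s,\omega),
\end{equation*}
where $\phi_j$ is the real linear combination of $\tfrac12\log(1\pm\rho)$ and $\tfrac12\log(1\pm s)$ read off from the $\lambda$-dependent exponents in Lemma~\ref{lem:kernel symb}, while $m_j$ inherits the $\O$-symbol bound in $\omega$ recorded there. Absorbing the time-oscillation $e^{i\omega\tau}$ into the phase gives the kernel
\begin{equation*}
K_j(\tau,\rho,s):=\frac{1}{2\pi}\int_\R e^{i\omega(\tau+\phi_j(\rho,s))}\,m_j(\rho,s,\omega)\,d\omega,
\end{equation*}
to which Lemmas~\ref{osci1}--\ref{osci4} apply and produce pointwise bounds of the form
\begin{equation*}
|K_j(\tau,\rho,s)|\lesssim W_j^\rho(\rho)\,W_j^s(s)\,\Theta_j\bigl(\tau+\phi_j(\rho,s)\bigr),
\end{equation*}
in which $W_j^\rho W_j^s$ is the factorised $(\rho,s)$-weight visible in Lemma~\ref{lem:kernel symb} and $\Theta_j(a)$ is one of $\langle a\rangle^{-2}$, $|a|^{1-\alpha}\langle a\rangle^{-2}$, or $|a|^{-1}\langle a\rangle^{-2}$, chosen according to the strength of the $\langle\omega\rangle$-decay of $m_j$.

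With these kernel bounds in hand, Minkowski's inequality in $\rho$ (legitimate since $\tfrac{2d}{d-3}\geq 2$) reduces the endpoint estimate to controlling
\begin{equation*}
\Bigl\|\int_0^1|f(s)|\Bigl(\int_0^\infty|K_j(\tau,\rho,s)|^2\,d\tau\Bigr)^{1/2}\,ds\Bigr\|_{L^{\frac{2d}{d-3}}_\rho(\B^d_1)}.
\end{equation*}
The inner $\tau$-integral is translation-invariant in $\phi_j$ and evaluates to an absolute constant times $W_j^\rho(\rho)W_j^s(s)$; a Cauchy--Schwarz in $s$ then isolates $\|f\|_{L^2}$ and leaves one-dimensional integrals in $\rho$ and $s$ that are dispatched by Lemmas~\ref{teclem1}, \ref{teclem2}, \ref{teclem4}, and~\ref{teclem5}. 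The principal obstacle is that for $j=1,5$ the amplitude $m_j$ decays only as $\langle\omega\rangle^{-1}$, so Lemma~\ref{osci2} delivers only the marginal $\langle a\rangle^{-2}$ without integrable gain at short phase distances; absorbing the accompanying $(1-s)^{-1/2}$-type singularity requires the sharp bound of Lemma~\ref{teclem5}, while for $j=3,4,7,8$ the $\rho^{(1-d)/2}$ weights must be traded against the $L^{\frac{2d}{d-3}}_\rho$-integration via Lemma~\ref{teclem2} in order to avoid a logarithmic loss.
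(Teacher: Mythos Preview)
Your outline contains a genuine gap in the treatment of the endpoint $(p,q)=(2,\tfrac{2d}{d-3})$. After swapping $L^2_\tau$ and $L^{\frac{2d}{d-3}}_\rho$ via Minkowski and then pulling the $s$-integral outside the $L^2_\tau$ norm, you propose to evaluate $\bigl(\int_0^\infty|K_j(\tau,\rho,s)|^2\,d\tau\bigr)^{1/2}$ as a constant multiple of $W_j^\rho(\rho)W_j^s(s)$ and finish with Cauchy--Schwarz in $s$. But for $j=2,3,4,6,7,8$ the $s$-weight reads $W_j^s(s)=s^{\frac{d-1}{2}}(1-s)^{-\frac12}$ (see Lemma~\ref{lem:kernel symb}), and Cauchy--Schwarz against $\|f\|_{L^2(\B^d_1)}$ would require $\int_0^1 (1-s)^{-1}\,ds<\infty$, which fails. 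The point is that the factor $\langle\tau+\log(1-s)\rangle^{-2}$ coming out of Lemmas~\ref{osci2}--\ref{osci4} is \emph{not} a harmless translation that can be integrated away in $\tau$: it is precisely what compensates the $(1-s)^{-1/2}$ singularity. In the paper this is exploited by keeping the $\tau$--$s$ coupling, substituting $s=1-e^{-y}$ so that $(1-s)^{-1/2}\,ds=e^{-y/2}\,dy$, and then recognising the $\tau$-integral as a convolution to which Young's inequality applies; the $\rho$-integral is handled beforehand (not afterwards) via Lemmas~\ref{teclem4} and~\ref{teclem5}.

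Your description of $T_1$ and $T_5$ is also off. On the support of $\chi_{i\omega}(s)$ (resp.\ $\chi_{i\omega}(\rho)$) one has $s\leq\widehat\rho_\lambda<1$, so the factors $(1-s^2)^{-3/4}$ and $(1-\rho^2)^{-1/4}$ are harmless and there is no $(1-s)^{-1/2}$ singularity to absorb. What the paper does instead is exploit $\rho\leq s\lesssim\langle\omega\rangle^{-1}$ on the support to trade powers of $s$ and $\rho$ for additional $\omega$-decay, obtaining $\O(\rho^{\frac{3-d}{2}+\frac18}s^{\frac{d-1}{2}-\frac14}\langle\omega\rangle^{-1-\frac18})$ so that Lemma~\ref{osci1} applies directly. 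Finally, the claim that the $L^\infty_\tau L^{\frac{2d}{d-2}}$ endpoint follows from an $H^1$ bound plus Sobolev is not substantiated and is not how the paper proceeds; the pointwise kernel estimates already place $T_j(\tau)f$ in the target Lebesgue spaces without passing through $H^1$.
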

\begin{proof}
We start off with $T_1$ which satisfies
\begin{align*}
T_1(\tau)f(\rho)=
\int_\rho^{\rho_1} f(s) \int_\R e^{i\omega\tau}\chi_{i\omega}(s)(1-\rho^2)^{-\frac{1}{4}-\frac{i\omega}{2}}(1-s^2)^{-\frac{3}{4}-\frac{i\omega}{2}}\O(\rho^0s\langle\omega\rangle^{-1})  d\omega ds
\end{align*}
for some $\rho_1<1$, thanks to Lemmas \ref{lem:kernel symb} and \ref{lem: interchange} and the fact that $\chi_{i \omega}(s)$ is supported away from $1$. 
Using that 
$$\chi_{i \omega}(s)\O( \rho^0s\langle\omega\rangle^{-1})=\chi_{i \omega}(s) \O(\rho^{\frac{3-d}{2}+\frac{1}{8}} s^{\frac{d-1}{2}-\frac{1}{4}}\langle\omega\rangle^{-1-\frac{1}{8}})$$ for $\rho \leq s$ and Lemma \ref{osci1} yields
\begin{align*}
|T_1(\tau)f(\rho)|\lesssim&\rho^{\frac{3-d}{2}+\frac{1}{8}}\int_0^{\rho_1} s^{\frac{d-1}{2}-\frac{1}{4}}f(s)\langle\tau-\frac{1}{2}\ln(1-\rho^2)+\frac{1}{2}\ln(1-s^2)\rangle^{-2} ds
\\
&\langle\tau\rangle^{-2}\rho^{\frac{3-d}{2}+\frac{1}{8}} \int_0^{1} s^{\frac{d-1}{2}-\frac{1}{4}}|f(s)|ds
\\
\lesssim& \langle\tau\rangle^{-2}\rho^{\frac{3-d}{2}+\frac{1}{8}}\|f\|_{L^2(\B^d_1)}.
\end{align*}
Since,
\begin{align*}
\||.|^{\frac{3-d}{2}+\frac{1}{8}}\|_{L^{\frac{2d}{d-3}}(\B^d_1)}=\left(\int_0^1 \rho^{-1+\frac{4d}{d-3}} d\rho \right)^{\frac{d-3}{2d}}\lesssim 1
\end{align*}
we conclude that 

\begin{align*}
\|T_1(.)f\|_{L^\infty(\R_+)L^{\frac{2d}{d-2}}(\B^d_1)}+
\|T_1(.)f\|_{L^2(\R_+)L^{\frac{2d}{d-3}}(\B^d_1)} \lesssim \|f\|_{L^2(\B^d_1)}.
\end{align*}
The estimate 
\begin{align*}
\|T_1(.)f\|_{L^p(\R_+)L^{q}(\B^d_1)}
&\lesssim \|f\|_{L^2(\B^d_1)}
\end{align*}
for general admissible pairs $(p,q)$ then follows from the interpolation argument done in the proof of Lemma \ref{lem: free strichartz}.
Slightly adapting these arguments allows us to also obtain the desired estimates for $T_5$.
Next, we turn to $T_2$ which is given by
\begin{align*}
T_2(\tau)f(\rho)&=\int_{\rho}^1 f(s)\int_\R e^{i\omega\tau}\chi_{i\omega}(\rho)(1-\chi_{i\omega}(s))(1-\rho^2)^{-\frac{1}{4}-\frac {i\omega}{2}}
\\
& \quad\times s^{\frac{d-1}{2}}(1-s)^{-\frac{1}{2}+i\omega}\O(\rho^{\frac{3-d}{2}+\frac{d-3}{4d}}\langle\omega\rangle^{\frac{d-3}{4d}})\widehat{\beta}(\rho,s,i \omega) d\omega ds
\end{align*} 
Since $\widehat{\beta}$ behaves like $\O(\langle\omega\rangle^{-1}) $ in $\omega$,
we can use Lemma \ref{osci2} with $\alpha=1-\frac{d-3}{4d}$ to obtain the estimate
\begin{align*}
\|T_2(\tau)f\|_{L^{\frac{2d}{d-3}}(\B^d_1)}\lesssim& \bigg\|\rho^{\frac{3-d}{2}+\frac{d-3}{4d}}\int_0^1\langle\tau+\ln(1-s)\rangle^{-2}
\\
&\times |\tau-\frac{1}{2}\ln(1-\rho^2)+\ln(1-s)|^{-\frac{d-3}{4d}}
(1-s)^{-\frac{1}{2}} s^{\frac{d-1}{2}}f(s)ds \bigg\|_{L^{\frac{2d}{d-3}}(\B^d_1)}
\\
\lesssim&\int_0^1 (1-s)^{-\frac{1}{2}} s^{\frac{d-1}{2}}|f(s)|\langle\tau+\ln(1-s)\rangle^{-2}
\\
&\times\left\|\rho^{\frac{3-d}{2}+\frac{d-3}{4d}} |\tau-\frac{1}{2}\ln(1-\rho^2)+\ln(1-s)|^{-\frac{d-3}{4d}}
\right\|_{L^{\frac{2d}{d-3}}(\B^d_1)}ds
\\
=&\int_0^1 (1-s)^{-\frac{1}{2}} s^{\frac{d-1}{2}}|f(s)|\langle\tau+\ln(1-s)\rangle^{-2}
\\
& \times \int_0^1
\left(\rho^{-\frac{1}{2}}|\tau-\frac{1}{2}\ln(1-\rho^2)+\ln(1-s)|^{-\frac{1}{2}} d\rho \right)^{\frac{d-3}{2d}}ds
\\
\lesssim &\int_0^1 (1-s)^{-\frac{1}{2}} s^{\frac{d-1}{2}}|f(s)|\langle\tau+\ln(1-s)\rangle^{-2}|\tau+\ln(1-s)|^{\frac{3-d}{4d}} ds
\end{align*}
by additionally employing Lemma \ref{teclem5}.
By now making the change of coordinates $s=1-e^{-y}$
and using Young's inequality, we derive that
\begin{align*}
\|T_2(.)f\|_{L^2(\R_+)L^{\frac{2d}{d-3}}(\B^d_1)}\lesssim& \left\|\int^{\infty}_0\langle\tau-y\rangle^{-2}|\tau-y|^{\frac{3-d}{4d}}e^{-\frac{1}{2}y} (1-e^{-y})^{\frac{d-1}{2}}|f(1-e^{-y})| dy\right\|_{L^2(\R_+)}
\\
\lesssim&
\left(\int_0^\infty e^{-y} (1-e^{-y})^{d-1}|f(1-e^{-y})|^2 dy \right)^2\int_\R \langle\tau\rangle^{-2}|\tau|^{-\frac{3-d}{4d}} d\tau
\\
\lesssim& \|f\|_{L^2(\B^d_1)}.
\end{align*}
As the other endpoint estimate follows likewise, the general one follows once more from interpolation.
$T_6$ can be bounded analogously by using Lemma \ref{teclem4} instead of Lemma \ref{teclem5} and we move on to $T_3$, which is given by
\begin{align*}
T_3(\tau)f(\rho)&=\int_0^1\rho^{\frac{1-d}{2}} 1_{[0,\infty)}(s-\rho) s^{\frac{d-1}{2}} f(s)  \int_\R e^{i \omega\tau} (1-\chi_{i\omega}(\rho))
\\
&\quad\times(1-\chi_{i\omega}(s))(1+\rho)^{\frac{1}{2}-i\omega}(1-s)^{-\frac{1}{2}+i\omega}\O(\langle\omega\rangle^{-1}) \widehat{\gamma}_1(\rho,s,i\omega) d \omega ds
\\
&:=\int_0^1I_3(\rho,s,\tau)f(s) ds.
\end{align*}

 Here, an application of Lemma \ref{osci3} yields the estimate 
\begin{align*}
|I_3(\rho,s,\tau) f(\rho)|\lesssim \rho^{\frac{3-d}{2}} \langle\tau-\ln(1+\rho)+\ln(1-s)\rangle^{-2} (1+\rho)^{\frac{1}{2}} (1-s)^{-\frac{1}{2}} s^{\frac{d-1}{2}} |f(s)|
  \end{align*}
while from Lemma \ref{osci4} we deduce that
\begin{align*}
|I_3(\rho,s,\tau) f(\rho)|&\lesssim \rho^{\frac{5-d}{2}} \langle\tau-\ln(1+\rho)+\ln(1-s)\rangle^{-2}|\tau-\ln(1+\rho)+\ln(1-s)|^{-1} 
\\
&\quad\times(1+\rho)^{\frac{1}{2}} (1-s)^{-\frac{1}{2}} s^{\frac{d-1}{2}} |f(s)|.
\end{align*} 
An interpolation between those two estimates then shows that
\begin{align*}
|T_3(\tau)f(\rho)|\lesssim &\rho^{\frac{3-d}{2}+\frac{d-3}{4d}} \int_0^1 \langle\tau-\ln(1+\rho)+\ln(1-s)\rangle^{-2}|\tau-\ln(1+\rho)+\ln(1-s)|^{\frac{3-d}{4d}} 
\\
&\times(1+\rho)^{\frac{1}{2}} (1-s)^{-\frac{1}{2}} s^{\frac{d-1}{2}} |f(s)| ds
\\
\lesssim &\rho^{\frac{3-d}{2}+\frac{d-3}{4d}} \int_0^1 \langle\tau+\ln(1-s)\rangle^{-2}|\tau-\ln(1+\rho)+\ln(1-s)|^{\frac{3-d}{4d}} 
\\
&\times(1-s)^{-\frac{1}{2}} s^{\frac{d-1}{2}} |f(s)| ds.
\end{align*}
Consequently, one can derive the desired estimates on $T_3$ by employing the same strategy as for $T_2$.
Similarly, one deduces that
\begin{align*}
|T_4(\tau)f(\rho)|&\lesssim \rho^{\frac{3-d}{2}+\frac{d-3}{4d}} \int_0^1 \langle\tau-\ln(1-\rho)+\ln(1-s)\rangle^{-2}|\tau-\ln(1-\rho)+\ln(1-s)|^{\frac{3-d}{4d}} 
\\
&\quad \times(1-\rho)^{\frac{1}{2}} (1-s)^{-\frac{1}{2}} s^{\frac{d-1}{2}} |f(s)| ds
\end{align*} 
and
as $(1-\rho)^{\frac{1}{2}}\langle\tau-\ln(1-\rho)+\ln(1-s)\rangle^{-2}\lesssim \langle\tau+\ln(1-s)\rangle^{-2}$, we conclude that
\begin{align*}
|T_4(\tau)f(\rho)|&\lesssim \rho^{\frac{3-d}{2}+\frac{d-3}{4d}} \int_0^1 \langle\tau+\ln(1-s)\rangle^{-2}|\tau-\ln(1-\rho)+\ln(1-s)|^{\frac{3-d}{4d}} 
\\
&\quad \times (1-s)^{-\frac{1}{2}} s^{\frac{d-1}{2}} |f(s)| ds.
\end{align*}
The Strichartz estimates for $T_4$ therefore also follow in similar fashion as the ones for $T_2$ and since the remaining kernels can be bounded analogously to $T_3$ and $T_4$, we conclude this proof.
\end{proof}
Unfortunately these estimate alone are not enough to control our semigroup, given that the right-hand side in \eqref{Eq:rep semigroup} also contains the term $\lambda f_1$. To remedy this problem, we introduce operators $\dot{T}_{j,\varepsilon} $ as
\begin{align*}
T_{j,\varepsilon}(\tau)f(\rho):=&\frac{1}{2\pi i}\lim_{N \to \infty}\int_{\varepsilon+iN}^{\varepsilon+iN}
\lambda e^{\lambda \tau}\int_0^1\widetilde{G}_j(\rho,s,\lambda)f(s) ds d\lambda
\end{align*}
and 
\[
\dot{T}_{j}(\tau)f(\rho)=\lim_{\varepsilon \to 0^+} \dot{T}_{\varepsilon,j}(\tau)f(\rho)
\]
for $f\in C^1(\overline{\B^d_1}).$
\\
This additional power of $\lambda$ makes life a bit more tricky for us, as it spoils the absolute convergence of the oscillatory integrals. To get around this difficulty we will perform integrations by parts in the $s$ integral to recover the absolute convergence. Using this trick we will show that the limiting operators $\dot{T}_j$ exist for each $\tau\geq 0$ and $\rho\in (0,1)$ and satisfy Strichartz estimates.
\begin{lem}\label{lem:strichartz 2}
Let $p\in [2,\infty]$ and $q\in [\frac{2d}{d-2},\frac{2d}{d-3}]$ be such that
$\frac{1}{p}+\frac{d}{q}=\frac{d}{2}-1$.
Then the operators $\dot{T}_j$ satisfy
\begin{align*}
\|\dot{T}_j(.)f\|_{L^p(\R_+)L^q(\B^d_1)}\lesssim \|f\|_{H^1(\B^d_1)}
\end{align*}
for all $f\in C^1(\overline{\B^d_1})$.
\end{lem}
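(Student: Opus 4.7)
The plan is to absorb the problematic factor of $\lambda$ in the kernel $\lambda\widetilde{G}_j(\rho,s,\lambda)$ by integrating by parts in $s$, trading it for an $s$-derivative landing on $f$ plus a bounded remainder. The relevant $\lambda$-dependence in $\widetilde{G}_j$ always enters through an exponential factor of the form $(1-s)^{-1/2+\lambda}$, $(1+s)^{-1/2+\lambda}$, or $s(1-s^2)^{-3/4+\lambda/2}$, and each of these satisfies an algebraic identity such as
\[
\lambda(1-s)^{-1/2+\lambda} = -\partial_s\bigl[(1-s)^{1/2+\lambda}\bigr] - \tfrac{1}{2}(1-s)^{-1/2+\lambda},
\qquad
\lambda s(1-s^2)^{-3/4+\lambda/2} = -\partial_s\bigl[(1-s^2)^{1/4+\lambda/2}\bigr] - \tfrac{s}{2}(1-s^2)^{-3/4+\lambda/2}.
\]
Accordingly, for each $j$ I decompose $\lambda\widetilde G_j = \partial_s\widetilde K_j + R_j$, where $\widetilde K_j$ is of the same form as $\widetilde G_j$ but with an improved power of $(1-s)$ (resp.\ $(1-s^2)$), and $R_j$ retains the same symbol structure as $\widetilde G_j$.

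Substituting into the defining formula for $\dot T_j(\tau)f(\rho)$ and integrating by parts in $s$ on the subinterval selected by the indicator $1_{\R_+}(\pm(s-\rho))$ produces
\[
\int_0^1 \lambda\,\widetilde G_j(\rho,s,\lambda)f(s)\,ds = -\int_0^1 \widetilde K_j(\rho,s,\lambda)f'(s)\,ds + \int_0^1 R_j(\rho,s,\lambda)f(s)\,ds + (\text{boundary terms}).
\]
The boundary term at $s=1$ vanishes because of the extra factor $(1-s)^{1/2+\lambda}$ or $(1-s^2)^{1/4+\lambda/2}$ in $\widetilde K_j$ when $\Re\lambda=0$. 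The boundary term at $s=0$ vanishes: for $\widetilde G_2,\widetilde G_3,\widetilde G_4,\widetilde G_7,\widetilde G_8$ because the cutoff $(1-\chi_\lambda)(s)$ is zero in a neighborhood of $s=0$; for $\widetilde G_5,\widetilde G_6$ because the Bessel factor $b_1(s,\lambda)\sim s^{(d-1)/2}$ present in $G_5$ and $G_6$ makes the kernel vanish to order $d-1$ at $s=0$, substantially more than the $\mathcal O(s)$ bound stated in Lemma~\ref{lem:kernel symb}; and for $\widetilde G_1$ because $s$ stays $\geq\rho>0$. Finally, the internal boundary at $s=\rho$ produced by the indicators is exactly matched in sign and magnitude by the corresponding contribution from the mirror kernel ($\widetilde G_1\leftrightarrow\widetilde G_5$, $\widetilde G_2\leftrightarrow\widetilde G_6$, $\widetilde G_3\leftrightarrow\widetilde G_7$, $\widetilde G_4\leftrightarrow\widetilde G_8$), reflecting the continuity of the full Green function $G=\sum_j\widetilde G_j$ across the diagonal. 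The surviving integrals $\int\widetilde K_j f'$ and $\int R_j f$ have symbol kernels satisfying the same oscillatory bounds (Lemmas~\ref{osci1}--\ref{osci4}) as $\widetilde G_j$ itself, so Lemma~\ref{lem:strichartz 1} applies verbatim: with $f\mapsto f'$ to the first integral and unchanged to the second. This yields
\[
\|\dot T_j(\cdot)f\|_{L^p(\R_+)L^q(\B^d_1)} \lesssim \|f'\|_{L^2(\B^d_1)} + \|f\|_{L^2(\B^d_1)} \lesssim \|f\|_{H^1(\B^d_1)}.
\]

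The main obstacle is the bookkeeping of the boundary contributions. Verifying the pairwise cancellation at $s=\rho$ requires a careful inspection of $\widetilde K_j$ for each of the eight kernels in turn. Equally delicate is the treatment of the terms generated when $\partial_s$ falls on the cutoff $\chi_\lambda(s)$ inside $\widetilde K_j$: although $|\partial_s\chi_\lambda|\lesssim\langle\omega\rangle$ costs one power of $\langle\omega\rangle$, this is compensated by the fact that $\chi_\lambda'$ is supported on an annulus of $s$-width $\sim\langle\omega\rangle^{-1}$ around $\varphi(s)\simeq|a(\lambda)|^{-1}$, preserving the net $\langle\omega\rangle^{-1}$ decay needed for Lemmas~\ref{osci1}--\ref{osci4}. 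Once these checks are complete, the interpolation argument used at the end of the proof of Lemma~\ref{lem:strichartz 1} between $(p,q)=(2,\tfrac{2d}{d-3})$ and $(\infty,\tfrac{2d}{d-2})$ closes the estimate for general admissible pairs.
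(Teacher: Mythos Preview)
Your integration-by-parts idea is the correct one and is exactly what the paper does for $j\geq 2$, but your treatment of the boundary contribution at $s=\rho$ has a genuine gap. The lemma asserts a bound on each $\dot T_j$ \emph{individually}, so you cannot invoke cancellation against the mirror kernel; even if such cancellation held (which is not automatic, since you choose different antiderivatives $\widetilde K_j$ for different $j$, and it is the values of $\widetilde K_j$, not of $\widetilde G_j$, that must match on the diagonal), it would only bound the pairs $\dot T_j+\dot T_{j+4}$. The paper instead keeps the diagonal boundary term and estimates it directly: for $j=2$, say, the integration by parts produces a term $B_2(\tau)f(\rho)$ with $|B_2(\tau)f(\rho)|\lesssim\langle\tau\rangle^{-2}\rho\,|f(\rho)|$ via Lemma~\ref{osci3}, and this is then controlled by the weighted Sobolev inequality
\[
\bigl\|\,|\cdot|\,f\bigr\|_{L^{2d/(d-3)}(\B_1^d)}\lesssim\|f\|_{H^1(\B_1^d)}
\]
of Lemma~\ref{teclem2}. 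This lemma is precisely the ingredient missing from your argument, and it is not covered by the assertion that ``Lemma~\ref{lem:strichartz 1} applies verbatim''.

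A second, smaller divergence: for $j=1,5$ the paper does \emph{not} integrate by parts at all. Since $\chi_{i\omega}(s)$ forces $s\lesssim\langle\omega\rangle^{-1}$, the extra factor of $\omega$ can be traded for $s^{-1}$ in the kernel, and the resulting integral $\int_0^1 s^{(d-3)/2-1/4}|f(s)|\,ds$ is controlled by the Hardy-type estimate $\|\,|\cdot|^{-1}f\|_{L^2(\B_1^d)}\lesssim\|f\|_{H^1(\B_1^d)}$ of Lemma~\ref{teclem1}. Your route via integration by parts could in principle be pushed through for these cases as well, but only after the diagonal boundary term is bounded on its own rather than cancelled.
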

\begin{proof}
For $j=1$ we can again without any problems take limits and interchange the order of integration as we can exchange powers of $s$ for decay in $\omega$. Doing this leads to 
\begin{align*}
\dot{T}_1(\tau) f(\rho)&=
\int_\rho^1 f(s) \int_\R e^{i\omega\tau}\chi_{i\omega}(s)(1-\rho^2)^{-\frac{1}{4}-\frac{i\omega}{2}}(1-s^2)^{-\frac{3}{4}-\frac{i\omega}{2}}
\\
&\quad\times\O(\rho^{\frac{3-d}{2}+\frac{1}{8}}s^{\frac{d-3}{2}-\frac{1}{4}}\langle\omega\rangle^{-1-\frac{1}{8}})  d\omega ds
\end{align*}
and we can employ Lemmas \ref{osci1} and \ref{teclem1} to compute that
\begin{align*}
|\dot{T}_1(\tau)f(\rho)|&\lesssim \langle\tau\rangle^{-2}\rho^{\frac{3-d}{2}+\frac{1}{8}}\int_0^1 s^{\frac{d-3}{2}-\frac{1}{4}}f(s) ds
\\
&\lesssim \langle\tau\rangle^{-2}\rho^{\frac{3-d}{2}+\frac{1}{8}} \||.|^{-1}f\|_{L^2(\B^d_1)}
\\
&\lesssim \langle\tau\rangle^{-2}\rho^{\frac{3-d}{2}+\frac{1}{8}} \|f\|_{H^1(\B^d_1)}
\end{align*}
by Lemma \ref{teclem1}.
Consequently, the claimed estimates on $\dot{T}_1$ follow as the ones for $T_1$ in Lemma \ref{lem:strichartz 1}. Analogously, we can bound $\dot{T}_5$ and so, we move on to $\dot{T}_2$, for which we note that we can again take limits, as we can once more exchange powers of $\rho$ for decay in $\omega$. An integration by parts shows that
\begin{align*}
\dot{T}_2(\tau)f(\rho)&=\int_\R  \omega e^{i\omega\tau} \O(\rho^{0}\langle\omega\rangle^{\frac{d-3}{2}})\chi_{i\omega}(\rho)(1-\rho^2)^{-\frac{1}{4}-\frac{i\omega}{2}} \int_\rho^1 (1-\chi_{i\omega}(s))
\\
& \quad\times s^{\frac{d-1}{2}}(1-s)^{-\frac{1}{2}+i\omega}\widehat{\beta}(\rho,s,i\omega) f(s) ds d\omega
\\
&=\int_\R  \omega e^{i\omega\tau} \O(\rho^{0}\langle\omega\rangle^{\frac{d-5}{2}})\chi_{i\omega}(\rho)(1-\rho^2)^{-\frac{1}{4}-\frac{i\omega}{2}}(1-\chi_{i\omega}(\rho))
\\
& \quad\times \rho^{\frac{d-1}{2}}(1-\rho)^{\frac{1}{2}+i\omega}\widehat{\beta}(\rho,\rho,i\omega) f(\rho) d\omega
\\
&\quad+ \int_\R  \omega e^{i\omega\tau} \O(\rho^{0}\langle\omega\rangle^{\frac{d-5}{2}})\chi_{i\omega}(\rho)(1-\rho^2)^{-\frac{1}{4}-\frac{i\omega}{2}} \int_\rho^1 (1-s)^{\frac{1}{2}+i\omega}
\\
& \quad\times \partial_s[ s^{\frac{d-1}{2}}(1-\chi_{i\omega}(s))\widehat{\beta}(\rho,s,i\omega) f(s) ]ds d\omega
\\
&:=B_2(\tau)f(\rho)+I_2(\tau)f(\rho).
\end{align*}
Given that $\widehat{\beta}(.,.i\omega)$ behaves like $\O(\langle\omega \rangle^{-1})$ in $\omega$, we can use Lemma \ref{osci3} to obtain the estimate 
\begin{align*}
|B_2(\tau)f(\rho)|\lesssim \langle\tau\rangle^{-2}\rho |f(\rho)|
\end{align*}
and so, by employing Lemma \ref{teclem2} we see that
\begin{align*}
\|B_2(.)f\|_{L^2(\R_+)L^{\frac{2d}{d-3}}(\B^d_1)}+\|B_2(.)f\|_{L^\infty(\R_+)L^{\frac{2d}{d-2}}(\B^d_1)}\lesssim \|f\|_{H^1(\B^d_1)}.
\end{align*}
For $I_2$ we first remark that if the $s$ derivative hits the cutoff, then we can argue similar as for $\dot{T}_1$ and so focus on the other cases. Let $I^1_2(\tau)f(\rho)$ be term where $f$ gets differentiated. Applying Lemma \ref{osci1} with $\alpha=\frac{1}{4}$ shows that
\begin{align*}
|I^1_2(\tau)f(\rho)|\lesssim& \rho^{\frac{5-2d}{4}}\int_0^1 (1-s)^{\frac{1}{2}}\langle\tau-\frac{1}{2}\ln(1-\rho^2)+\ln(1-s)\rangle^{-2}s^{\frac{d-1}{2}}|f'(s)| ds
\\
\lesssim& \rho^{\frac{5-2d}{4}}\langle\tau\rangle^{-2}\int_0^1 s^{\frac{d-1}{2}}|f'(s)| ds
\lesssim  \rho^{\frac{5-2d}{4}}\langle\tau\rangle^{-2}\|f\|_{H^1(\B^d_1)}
\end{align*}
and so
\begin{align*}
\|I^1_2(.)f\|_{L^\infty(\R_+)L^{\frac{2d}{d-2}}(\B^d_1)}\lesssim \|f\|_{H^1(\B^d_1)}.
\end{align*}
For the other endpoint estimate we can argue similarly as when we bounded $T_2$ and since the remaining cases can be bounded likewise, we move on to $\dot{T}_3$.
For $\dot{T}_3$, we again perform an integration by parts, which recovers the absolute convergence of the $\lambda$ integral, thus enabling us to once more take limits and interchange the order of integration.
More precisely, we deduce that
\begin{align*}
\dot{T}_3(\tau)f(\rho)&= f(\rho)\int_\R \omega e^{i \omega\tau} (1-\chi_{i \omega}(\rho))^2(1+\rho)^{\frac{1}{2}+i \omega}(1-\rho)^{\frac{1}{2}+i\omega}
\O(\langle\omega\rangle^{-2}) \widehat{\gamma}_1(\rho,\rho,i \omega) d\omega
\\
&\quad+\rho^{\frac{1-d}{2}}\int_\rho^1 \int_\R \omega e^{i \omega\tau} (1-\chi_{i \omega}(\rho))(1+\rho)^{\frac{1}{2}+i \omega}(1-s)^{\frac{1}{2}+i\omega}
\\
&\quad\times\partial_s[(1-\chi_{i \omega}(s))s^{\frac{d-1}{2}}f(s) \widehat{\gamma}_1(\rho,s,i\omega) ]d\omega ds
\end{align*}
and one can bound $\dot{T}_3 $ in analogy to $T_2$.
Moreover, since the remaining kernels can also be bounded in similar fashion as either $\dot{T}_2$ or $\dot{T}_3$, we conclude this proof.
\end{proof}
Thanks to these kernel bounds we can now easily establish Theorem \ref{thm:strichartz}.
\begin{proof}[Proof of Theorem \ref{thm:strichartz}]
The estimates for $d=3$ have already been established in \cite{Don17} and so, we only have to verify the claim for $d\geq 4$. 
For $\ff \in C^2\times C^1(\overline{\B^d_1})$ the homogeneous estimates are an immediate consequence of the representation \eqref{Eq:rep semigroup} combined with Lemmas \ref{lem:strichartz 1}, \ref{lem:strichartz 2}, and the Strichartz estimates on $\Sf_0$ proven in Lemma \ref{lem: free strichartz}. Therefore, the general homogeneous estimates follow from a density argument.
The inhomogenous ones can then be established as in the proof of Lemma \ref{lem: free strichartz}.
\end{proof}
\section{Nonlinear Theory}

In this section we turn to the proof of Theorem \ref{thm: stability} and so we restrict ourselves to dimensions $d$ with $3\leq d\leq 6$.  Recall, that the nonlinearity $\Nf$ was defined as 
\begin{align*}
\Nf(\uf)=\begin{pmatrix}
0\\
N(u_1)
\end{pmatrix}
\end{align*}
with
\begin{align*}
N(u)=|c_d+u|^{\frac{4}{d-2}}(c_d+u)-c_d^{\frac{d+2}{d-2}}-\frac{2d+d^2}{4}u.
\end{align*}
\begin{lem}\label{lem:nonlinear esti}
Let $3\leq d\leq 6$. Then the estimates 
\begin{align*}
\|\Nf(\uf)\|_{H^1\times L^2(\B^d_1)}\lesssim \|u_1\|_{L^4(\B^d_1)}^2+\|u_1\|_{L^{\frac{2d+4}{d-2}}(\B^d_1)}^{\frac{d+2}{d-2}}
\end{align*}
and
\begin{align*}
\|\Nf(\uf)-\Nf(\vf)\|_{H^1\times L^2(\B^d_1)}\lesssim &\|u_1-v_1\|_{L^4(\B^d_1)}(\|u_1\|_{L^4(\B^d_1)}+\|v_1\|_{L^4(\B^d_1)})
\\
&+\|u_1-v_1\|_{L^{\frac{2d+4}{d-2}}(\B^d_1)}(\|u_1\|_{L^{\frac{2d+4}{d-2}}(\B^d_1)}^{\frac{4}{d-2}}+\|u_1\|_{L^{\frac{2d+4}{d-2}}(\B^d_1)}^{\frac{4}{d-2}})
\end{align*}
hold for all $\uf,\vf \in H^1\times L^2(\B^d_1)$.
\end{lem}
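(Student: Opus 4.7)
The plan is to reduce both inequalities to pointwise bounds on $N$ and then apply Hölder's inequality. Since $\Nf(\uf) = (0, N(u_1))$, the first component contributes nothing and $\|\Nf(\uf)\|_{H^1\times L^2} = \|N(u_1)\|_{L^2}$; likewise for the difference. So the whole task is to bound $\|N(u_1)\|_{L^2}$ and $\|N(u_1)-N(v_1)\|_{L^2}$.

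First I would establish the pointwise bounds
\begin{align*}
|N(u)| \lesssim u^2 + |u|^{(d+2)/(d-2)}, \qquad |N(u)-N(v)| \lesssim |u-v|\bigl(|u|+|v|+|u|^{4/(d-2)}+|v|^{4/(d-2)}\bigr).
\end{align*}
Writing $F(x) := |x|^{4/(d-2)} x$, so that $N(u) = F(c_d+u) - F(c_d) - F'(c_d)u$, the function $F$ is smooth on $(0,\infty)$ and $c_d>0$, so Taylor's theorem applied at $c_d$ yields $|N(u)|\lesssim u^2$ for $|u|\leq c_d/2$, while for $|u|\geq c_d/2$ the power-law growth $|F(c_d+u)|\lesssim |u|^{(d+2)/(d-2)}$ absorbs the lower-order pieces $F(c_d)$ and $F'(c_d)u$. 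The Lipschitz bound follows similarly from writing
\begin{align*}
N(u)-N(v) = (u-v)\int_0^1 \bigl[F'(c_d+v+t(u-v))-F'(c_d)\bigr]\,dt
\end{align*}
and establishing $|F'(c_d+w)-F'(c_d)|\lesssim |w|+|w|^{4/(d-2)}$ by the same small/large $w$ dichotomy.

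Given these pointwise inequalities, the $L^2(\B^d_1)$ estimates follow from Hölder. For the first estimate, $\|u_1^2\|_{L^2}=\|u_1\|_{L^4}^2$ and $\||u_1|^{(d+2)/(d-2)}\|_{L^2}=\|u_1\|_{L^{(2d+4)/(d-2)}}^{(d+2)/(d-2)}$. For the Lipschitz estimate, Hölder with exponent pair $(4,4)$ handles the linear part, and with $\bigl((2d+4)/(d-2),(d+2)/2\bigr)$ handles the power-law part, using $\||u_1|^{4/(d-2)}\|_{L^{(d+2)/2}}=\|u_1\|_{L^{(2d+4)/(d-2)}}^{4/(d-2)}$ and the identity $(d-2)/(2d+4)+2/(d+2)=1/2$.

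The only genuinely technical point is verifying the pointwise bound on $F'(c_d+w)-F'(c_d)$ uniformly in $w$ when $4/(d-2)$ is not an integer. But because the linearization is performed at $c_d>0$, the singularity of $F$ at the origin is avoided and Taylor's theorem applies in the regime $|w|\leq c_d/2$; the complementary regime $|w|\geq c_d/2$ is handled by comparing powers. Everything else is a direct application of Hölder, so I would not expect this lemma to present serious difficulties beyond the bookkeeping of exponents.
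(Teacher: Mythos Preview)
Your proposal is correct and follows essentially the same approach as the paper: establish the pointwise bounds $|N(u)|\lesssim u^2+|u|^{(d+2)/(d-2)}$ and $|N(u)-N(v)|\lesssim |u-v|(|u|+|v|+|u|^{4/(d-2)}+|v|^{4/(d-2)})$, then apply H\"older with the exponent pairs you indicate. The paper is terser---it just cites $N(0)=N'(0)=0$ for the first bound and writes $|N(x)-N(y)|\lesssim |x-y|(|N'(x)|+|N'(y)|)$ for the second---whereas your Taylor/integral-representation argument spells out the same content more explicitly, but the substance is identical.
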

\begin{proof}
Since $N(0)=N'(0)=0$ we obtain that 
\begin{align*}
|N(x)|\lesssim x^2+|x|^{\frac{d+2}{d-2}}.
\end{align*}
Hence,
\begin{align*}
\|\Nf(\uf)\|_{H^1\times L^2(\B^d_1)}\lesssim \|u_1^2\|_{L^2(\B^d_1)}+\|u_1^{\frac{d+2}{d-2}}\|_{L^2(\B^d_1)}=\|u_1\|_{L^4(\B^d_1)}^2+\|u_1\|_{L^{\frac{2d+4}{d-2}}(\B^d_1)}^{\frac{d+2}{d-2}}.
\end{align*}
For the Lipschitz bound, we infer that
\begin{align*}
|N(x)-N(y)|\lesssim |x-y|(|N'(x)|+|N'(y)|)\lesssim |x-y|(|x|+|x|^{\frac{4}{d-2}}+|y|+|y|^{\frac{4}{d-2}})
\end{align*}
from which we conclude that
\begin{align*}
\|\Nf(\uf)-\Nf(\vf)\|_{H^1\times L^2(\B^d_1)}&\lesssim \|u_1-v_1\|_{L^4(\B^d_1)}(\|u_1\|_{L^4(\B^d_1)}+\|v_1\|_{L^4(\B^d_1)})
\\
&\quad+\|u_1-v_1\|_{L^{\frac{2d+4}{d-2}}(\B^d_1)}(\|u_1^{\frac{4}{d-2}}\|_{L^{\frac{d+2}{2}}(\B^d_1)}+\|v_1^{\frac{4}{d-2}}\|_{L^{\frac{d+2}{2}}(\B^d_1)})
\\
&\lesssim \|u_1-v_1\|_{L^4(\B^d_1)}(\|u_1\|_{L^4(\B^d_1)}+\|v_1\|_{L^4(\B^d_1)})
\\
&\quad+\|u_1-v_1\|_{L^{\frac{2d+4}{d-2}}(\B^d_1)}(\|u_1\|_{L^{\frac{2d+4}{d-2}}(\B^d_1)}^{\frac{4}{d-2}}+\|v_1\|_{L^{\frac{2d+4}{d-2}}(\B^d_1)}^{\frac{4}{d-2}})
\end{align*}
by using Hölder's inequality.
\end{proof}
To finally define a notion of a solution, we first introduce the Strichartz space $\mathcal{X}$ as the completion of $C_c^\infty([0,\infty)\times \overline{\B^d_1})$ with respect to the norm
\begin{align*}
\|\phi\|_{\X}:=\|\phi\|_{L^2(\R_+)L^{\frac{2d}{d-3}}(\B^d_1)}+\|\phi\|_{L^{\frac{d+2}{d-2}}(\R_+)L^{\frac{2d+4}{d-2}}(\B^d_1)}.
\end{align*}
Further, for any $\uf \in \mathcal{H}$ and $\phi\in C^\infty_c([0,\infty)\times \overline{\B^d_1}) $, we define
\begin{align*}
\K_{\uf}(\phi)(\tau):= [\Sf(\tau)\uf]_1+\int_0^\tau [\Sf(\tau-\sigma)\Nf((\phi(\sigma),0))]_1 d \sigma-[e^{\tau}\Cf(\phi,\uf)]_1,
\end{align*}
where $$
\Cf(\phi,\uf):=\Pf\left(\uf+\int_0^\infty e^{-\sigma}\Nf((\phi(\sigma),0))d \sigma\right).
$$ 
Finally, we will also make use of the abbreviation
\[\X_\delta:=\{\phi \in \X: \|\phi\|_\X \leq \delta\}
\]
and at last, we come to our definition of a solution.
\begin{defi}\label{def:solutionstrichartz}
Let
  \[ \Gamma^T:=\{(t,r)\in [0,T)\times [0,\infty): r\leq T-t\}. \]
  We say that $u: \Gamma^T\to \R$ is a Strichartz solution of
\[ \left(\partial_t^2-\partial_r^2-\frac{d-1}{r}\partial_r\right)
  u(t,r)= u(t,r)|u(t,r)|^{\frac{4}{d-2}}\]

if $\phi:=\Phi_1=[\Psi-\Psi_*]_1$, with
\[ \Psi(\tau,\rho):=
  \begin{pmatrix}
    \psi(\tau,\rho) \\ \left(\frac{d-2}{2}+\partial_\tau+\rho\partial_\rho\right)\psi(\tau,\rho)
  \end{pmatrix},\qquad \psi(\tau,\rho):=(Te^{-\tau})^{\frac{d-2}{2}}u(T-Te^{-\tau}, Te^{-\tau}\rho),
\]
  belongs to $\mathcal{X}$ and satisfies
\begin{align*}
\phi=\K_{\Phi(0)}(\phi)
\end{align*}
and $\Cf(\phi,\Phi(0))=\textup{\textbf{0}}$.
\end{defi}
Our modus operandi is now essentially a three step process.
First, we will show that when properly restricted, $\K_\uf$ will satisfy the requirements of the Banach fixed point theorem. 
After that, we will show that by choosing our blowup time appropriately, we can achieve that the correction $\Cf$ vanishes.
Then, we will show uniqueness of solutions in the Strichartz space.
We kick off this program with the following lemma.
\begin{lem}\label{lem:integralbound}
Let $\uf \in \mathcal{H}$ and  $\phi\in C^\infty_c([0,\infty)\times \overline{\B^d_1})$. Then $\K_\uf(\phi)\in \mathcal{X}$. Furthermore, the estimate
\begin{align*}
\|\K_\uf(\phi)\|_{\X}\lesssim \|\uf\|_{\mathcal{H}}+\|\phi\|_{\X}^2+\|\phi\|_{\X}^{\frac{d+2}{d-2}}
\end{align*}
holds for all $\uf \in \mathcal{H}$ and  $\phi\in C^\infty_c([0,\infty)\times \overline{\B^d_1})$.
\end{lem}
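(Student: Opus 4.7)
The plan is to rewrite $\K_\uf(\phi)$ as the sum of a stable linear evolution, a Duhamel term with $(\I-\Pf)$ inserted, and an exponentially weighted tail. The key identity to exploit is that $\Sf(\tau)\Pf = e^\tau \Pf$, since by Lemma~\ref{lem:mulitplicities} the range of $\Pf$ is the one-dimensional eigenspace spanned by $\gf=(2,d)^\top$ associated to the eigenvalue $1$. Splitting the factor $\Sf(\tau)$ in both occurrences of the semigroup in the definition of $\K_\uf(\phi)$ into the stable and unstable parts and cancelling the terms that involve $e^\tau \Pf\uf$, a short calculation yields
\begin{align*}
\K_\uf(\phi)(\tau) &= [\Sf(\tau)(\I-\Pf)\uf]_1 + \int_0^\tau [\Sf(\tau-\sigma)(\I-\Pf)\Nf((\phi(\sigma),0))]_1\,d\sigma \\
&\quad - \int_\tau^\infty e^{\tau-\sigma}[\Pf\Nf((\phi(\sigma),0))]_1\,d\sigma.
\end{align*}
This decomposition is the technical heart of the argument: it trades the exponentially growing piece $e^\tau\Cf(\phi,\uf)$ for a harmless integral with an integrable convolution kernel.

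Once the decomposition is in place, the first two terms are controlled directly by Theorem~\ref{thm:strichartz}. The homogeneous Strichartz bound applied to both admissible pairs in the norm $\|\cdot\|_\X$ gives
\[ \big\|[\Sf(\cdot)(\I-\Pf)\uf]_1\big\|_\X \lesssim \|\uf\|_{\mathcal H}, \]
and the inhomogeneous Strichartz bound gives
\[ \left\|\int_0^\tau[\Sf(\tau-\sigma)(\I-\Pf)\Nf((\phi(\sigma),0))]_1\,d\sigma\right\|_\X \lesssim \|\Nf((\phi,0))\|_{L^1(\R_+)\mathcal H}. \]
For the third term I use that $\Pf$ has rank one with $[\gf]_1\equiv 2$, so the first component of $\Pf\Nf((\phi(\sigma),0))$ is a scalar multiple of the constant function $2$, and its absolute value is bounded, up to a constant, by $\|\Nf((\phi(\sigma),0))\|_{\mathcal H}$. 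Taking the spatial $L^q$ norm on $\B^d_1$ therefore costs only a constant, and Young's inequality for the one-sided exponential kernel $e^{-s}\mathbf 1_{[0,\infty)}$ yields
\[ \left\|\int_\tau^\infty e^{\tau-\sigma}[\Pf\Nf((\phi(\sigma),0))]_1\,d\sigma\right\|_{L^p_\tau L^q} \lesssim \|\Nf((\phi,0))\|_{L^1(\R_+)\mathcal H} \]
for both admissible exponents $p$, and hence for the $\X$-norm.

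It remains to estimate $\|\Nf((\phi,0))\|_{L^1(\R_+)\mathcal H}$ in terms of $\|\phi\|_\X$. By Lemma~\ref{lem:nonlinear esti} and Hölder in $\tau$,
\begin{align*}
\|\Nf((\phi,0))\|_{L^1(\R_+)\mathcal H} &\lesssim \|\phi\|_{L^2_\tau L^4_\rho}^2 + \|\phi\|_{L^{(d+2)/(d-2)}_\tau L^{(2d+4)/(d-2)}_\rho}^{(d+2)/(d-2)}.
\end{align*}
The restriction $3\leq d\leq 6$ is used precisely here: it implies $\tfrac{2d}{d-3}\geq 4$, so the embedding $L^{2d/(d-3)}(\B^d_1)\hookrightarrow L^4(\B^d_1)$ absorbs the first term into the $\X$-norm. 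Combining the three contributions gives the desired bound $\|\K_\uf(\phi)\|_\X\lesssim \|\uf\|_{\mathcal H}+\|\phi\|_\X^2+\|\phi\|_\X^{(d+2)/(d-2)}$. That $\K_\uf(\phi)\in\X$ itself follows because each of the three pieces is the norm limit of its approximations: the linear term is obtained from Theorem~\ref{thm:strichartz} by density, the Duhamel and tail terms are bounded linear operators on the $L^1(\R_+,\mathcal H)$-valued integrand $\Nf((\phi,0))$, which is smooth and compactly supported in $\tau$ when $\phi\in C^\infty_c$. The main technical obstacle is the algebraic rewriting in the first step; once that is done, every remaining ingredient is either Theorem~\ref{thm:strichartz} or a soft functional-analytic bound.
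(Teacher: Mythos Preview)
Your proof is correct and follows essentially the same approach as the paper: the paper splits $\K_\uf(\phi)$ into what it calls $(\I-\Pf)\K_\uf(\phi)$ and $\Pf\K_\uf(\phi)$, which are precisely your first two terms and your tail term $-\int_\tau^\infty e^{\tau-\sigma}[\Pf\Nf((\phi(\sigma),0))]_1\,d\sigma$, respectively, and then applies Theorem~\ref{thm:strichartz}, the rank-one structure of $\Pf$, Young's inequality, and Lemma~\ref{lem:nonlinear esti} exactly as you do. Your write-up is in fact slightly more explicit than the paper's about the algebraic cancellation and about the embedding $L^{2d/(d-3)}(\B^d_1)\hookrightarrow L^4(\B^d_1)$ needed to absorb the $L^4$ term into $\|\phi\|_\X$.
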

\begin{proof}
We first investigate $(\I-\Pf)\K_\uf(\phi)$.
By Theorem \ref{thm:strichartz} and Lemma \ref{lem:nonlinear esti}
we have that 
\begin{align*}
\|(\I-\Pf)\K_\uf(\phi)\|_{\X}&\lesssim \|\uf\|_{\mathcal{H}}
+\int_0^\tau\|N(\phi(\sigma))\|_{L^2(\B^d_1)} d \sigma
\\
&\lesssim \|\uf\|_{\mathcal{H}}
+\int_0^\infty\|\phi(\sigma)\|_{L^4(\B^d_1)}^2+\int_0^\infty\|\phi(\sigma)\|_{L^{\frac{2d+4}{d-2}}(\B^d_1)}^{\frac{d+2}{d-2}} d \sigma
\\
&\lesssim \|\uf\|_{\mathcal{H}}
+\|\phi\|_{\X}^2+\|\phi\|_{\X}^{\frac{d+2}{d-2}}.
\end{align*}
Next, we take a look at $\Pf\K_\uf(\phi)$ and compute that
\begin{align*}
\Pf\K_\uf(\phi)=-\int_\tau^\infty e^{\tau-\sigma}\Pf \Nf((\phi(\sigma),0)) d\sigma.
\end{align*}
Now, as $\Pf$ is a bounded linear operator with $\rg\Pf=\text{span}\{\gf\}$, there exists a unique $\widetilde{\gf}\in \mathcal{H}$, with 
\[\Pf \ff=(\ff,\widetilde{\gf})_{\mathcal{H}}\gf
\]
 for all $\ff \in \mathcal{H}$.
This implies that
$\|[\Pf \ff]_1\|_{L^q(\B_1^6)}\lesssim
|(\ff,\widetilde{\gf})_{\mathcal H}|\|g_1\|_{L^q(\B_1^6)}\lesssim \|\ff\|_{\mathcal H}$ for $1\leq q\leq \infty$
and consequently,
\begin{align*}
\|\Pf\K_\uf(\phi)(\tau)\|_{L^q(\B^d_1)}\lesssim \int_\tau^\infty e^{\tau-\sigma} \|N(\phi(\sigma))\|_{L^2(\B^d_1)} d\sigma.
\end{align*}
So, applying Young's inequality yields
\begin{align*}
\|\Pf\K_\uf(\phi)\|_{\X}&\lesssim \left(\|1_{(-\infty,0)}e^{|.|}\|_{L^2(\R)}+\|1_{(-\infty,0)}e^{|.|}\|_{L^{\frac{d+2}{d-2}}(\R)}\right)\int_0^\infty  \|N((\phi(\sigma))\|_{L^2(\B^d_1)} d\sigma
\\
&\lesssim \|\phi\|_{\X}^2+\|\phi\|_{\X}^{\frac{d+2}{d-2}}.
\end{align*}

\end{proof}
\begin{lem}\label{lem: local lip}
Let $\uf \in \mathcal{H}$. Then the local Lipschitz estimate
\begin{align*}
\|\K_\uf(\phi)-\K_\uf(\psi)\|_{\X}\lesssim \|\phi-\psi\|_{\X}\left(\|\phi\|_{\X}+\|\phi\|_{\X}^{\frac{4}{d-2}}+\|\psi\|_{\X}+\|\psi\|_{\X}^{\frac{4}{d-2}}\right)
\end{align*}
holds
for $\phi, \psi \in C^\infty_c ([0,\infty)\times \overline{\B^d_1})$.
\end{lem}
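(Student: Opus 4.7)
The plan is to mirror the proof of Lemma \ref{lem:integralbound} step by step. The key initial observation is that in the difference $\K_\uf(\phi)-\K_\uf(\psi)$ the $\uf$-only terms $[\Sf(\tau)\uf]_1$ and the $\Pf\uf$-part of $[e^\tau \Cf]_1$ cancel, leaving
\[
\K_\uf(\phi)(\tau)-\K_\uf(\psi)(\tau)=\left[\int_0^\tau \Sf(\tau-\sigma)\Mf(\sigma)\,d\sigma-e^\tau\Pf\int_0^\infty e^{-\sigma}\Mf(\sigma)\,d\sigma\right]_1
\]
where $\Mf(\sigma):=\Nf((\phi(\sigma),0))-\Nf((\psi(\sigma),0))$. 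As in Lemma \ref{lem:integralbound}, this expression decomposes cleanly into its $(\I-\Pf)$-part and its $\Pf$-part, so the $\uf$-argument plays no further role.

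For the $(\I-\Pf)$-part the correction term is annihilated, and the inhomogeneous estimate in Theorem \ref{thm:strichartz} immediately yields
\[
\|[(\I-\Pf)(\K_\uf(\phi)-\K_\uf(\psi))]\|_\X\lesssim \int_0^\infty \|\Mf(\sigma)\|_\H\,d\sigma.
\]
For the $\Pf$-part, using $\Sf(\tau)\Pf=e^\tau\Pf$ one obtains, exactly as in Lemma \ref{lem:integralbound}, the representation
\[
\Pf(\K_\uf(\phi)-\K_\uf(\psi))(\tau)=-\int_\tau^\infty e^{\tau-\sigma}\Pf\Mf(\sigma)\,d\sigma,
\]
which, after extracting the first component via $\|[\Pf\ff]_1\|_{L^q(\B^d_1)}\lesssim \|\ff\|_\H$ and applying Young's convolution inequality against $1_{(-\infty,0)}e^{|\cdot|}$, is likewise controlled by $\int_0^\infty \|\Mf(\sigma)\|_\H\,d\sigma$.

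It then remains to bound this integral in the required form by the Lipschitz estimate of Lemma \ref{lem:nonlinear esti} together with H\"older in time. The ``quintic'' summand $\|\phi(\sigma)-\psi(\sigma)\|_{L^{(2d+4)/(d-2)}}(\|\phi(\sigma)\|_{L^{(2d+4)/(d-2)}}^{4/(d-2)}+\|\psi(\sigma)\|_{L^{(2d+4)/(d-2)}}^{4/(d-2)})$ integrates to exactly the norms present in $\X$, since $\tfrac{d-2}{d+2}+\tfrac{4}{d+2}=1$ is the matching H\"older partition. The ``quadratic'' summand $\|\phi(\sigma)-\psi(\sigma)\|_{L^4}(\|\phi(\sigma)\|_{L^4}+\|\psi(\sigma)\|_{L^4})$ is handled by Cauchy-Schwarz in $\sigma$ together with the embedding $L^{2d/(d-3)}(\B^d_1)\hookrightarrow L^4(\B^d_1)$, valid throughout $4\le d\le 6$ (the endpoint $d=3$ being absorbed into the dedicated treatment of \cite{Don17}). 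I do not expect any serious obstacle: the structural decomposition is already established in Lemma \ref{lem:integralbound}, and Lemma \ref{lem:nonlinear esti} is stated exactly as the difference bound required; the only real care needed is the bookkeeping of H\"older exponents and of the $L^4$-embedding in each admissible dimension.
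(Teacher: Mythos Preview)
Your proposal is correct and follows essentially the same approach as the paper: decompose $\K_\uf(\phi)-\K_\uf(\psi)$ into its $(\I-\Pf)$- and $\Pf$-parts, apply the inhomogeneous Strichartz estimate of Theorem \ref{thm:strichartz} to the former and the Young's inequality argument from Lemma \ref{lem:integralbound} to the latter, then invoke the Lipschitz bound of Lemma \ref{lem:nonlinear esti}. Your explicit bookkeeping of the time-H\"older exponents and the $L^{2d/(d-3)}\hookrightarrow L^4$ embedding simply makes visible what the paper leaves implicit in the final line of each estimate.
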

\begin{proof}
We again start by first investigating $(\I-\Pf)\K_\uf(\phi)-(\I-\Pf)\K_\uf(\psi)$. From Theorem \ref{thm:strichartz} and Lemma \ref{lem:nonlinear esti} we infer that
\begin{align*}
\|(\I-\Pf)\K_\uf(\phi)-(\I-\Pf)\K_\uf(\psi)\|_{\X}&\lesssim \int_0^\tau\|N(\phi(\sigma))-N(\psi(\sigma))\|_{L^2(\B^d_1)} d\sigma
\\
&\lesssim \|\phi-\psi\|_{\X}\left(\|\phi\|_{\X}+\|\phi\|_{\X}^{\frac{4}{d-2}}+\|\psi\|_{\X}+\|\psi\|_{\X}^{\frac{4}{d-2}}\right).
\end{align*}
Further, from the calculations in the proof of Lemma \ref{lem:integralbound} we conclude that
\begin{align*}
\|\Pf[\K_\uf(\phi)(\tau)-\K_\uf(\psi)(\tau)]\|_{L^q(\B^d_1)}\lesssim \int_\tau^\infty e^{\tau-\sigma} \|N(\phi(\sigma))-N(\psi(\sigma))\|_{L^2(\B^d_1)} d\sigma
\end{align*}
again for $1\leq q \leq\infty$. Once more applying Young's inequality then yields
\begin{align*}
\|\Pf[\K_\uf(\phi)-\K_\uf(\phi)]\|_{\X}&\lesssim \left(\|1_{(-\infty,0)}e^{|.|}\|_{L^2(\R)}+\|1_{(-\infty,0)}e^{|.|}\|_{L^{\frac{d+2}{d-2}}(\R)}\right)
\\
&\quad\times\int_0^\infty  \|N(\phi(\sigma))-N(\psi(\sigma))\|_{L^2(\B^d_1)} d\sigma
\\
&\lesssim \|\phi-\psi\|_{\X}\left(\|\phi\|_{\X}+\|\phi\|_{\X}^{\frac{4}{d-2}}+\|\psi\|_{\X}+\|\psi\|_{\X}^{\frac{4}{d-2}}\right).
\end{align*}
\end{proof}
The next two results are now a direct consequence of these two Lemmas.
\begin{lem}
Let $\uf \in \mathcal{H}$. Then the operator $\K_\uf$ extends to an operator from $\X$ to $\X$ such that the estimates
\begin{align*}
\|\K_\uf(\phi)\|_{\X}\lesssim \|\uf\|_{\mathcal{H}}+\|\phi\|_{\X}^2+\|\phi\|_{\X}^{\frac{d+2}{d-2}}
\end{align*}
and
\begin{align*}
\|\K_\uf(\phi)-\K_\uf(\psi)\|_{\X}\lesssim \|\phi-\psi\|_{\X}\left(\|\phi\|_{\X}+\|\phi\|_{\X}^{\frac{4}{d-2}}+\|\psi\|_{\X}+\|\psi\|_{\X}^{\frac{4}{d-2}}\right)
\end{align*}
hold for all
$\phi, \psi \in \X$.
\end{lem}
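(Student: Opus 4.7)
The proof is a standard density argument built on the two preceding lemmas. By definition, the space $\X$ is the completion of $C_c^\infty([0,\infty)\times\overline{\B^d_1})$ with respect to $\|\cdot\|_\X$, so the test function class appearing in Lemmas \ref{lem:integralbound} and \ref{lem: local lip} is dense in $\X$. The strategy is thus to define the extension of $\K_\uf$ by continuity along Cauchy sequences and then pass the two estimates to the limit.

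\textbf{Step 1: Construction of the extension.} Given $\phi\in\X$, choose a sequence $(\phi_n)\subset C_c^\infty([0,\infty)\times\overline{\B^d_1})$ with $\phi_n\to\phi$ in $\X$. In particular $M:=\sup_n\|\phi_n\|_\X<\infty$. Lemma \ref{lem: local lip} applied to the pair $(\phi_n,\phi_m)$ yields
\begin{align*}
\|\K_\uf(\phi_n)-\K_\uf(\phi_m)\|_\X \lesssim \|\phi_n-\phi_m\|_\X\bigl(M+M^{\frac{4}{d-2}}\bigr),
\end{align*}
so $(\K_\uf(\phi_n))$ is Cauchy in the Banach space $\X$ and therefore converges to some element which we denote by $\K_\uf(\phi)$. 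A second Lipschitz comparison between two different approximating sequences shows that the limit does not depend on the chosen sequence, so $\K_\uf:\X\to\X$ is well defined.

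\textbf{Step 2: Passing the estimates to the limit.} For each fixed $n$, Lemma \ref{lem:integralbound} gives
\begin{align*}
\|\K_\uf(\phi_n)\|_\X\lesssim \|\uf\|_{\mathcal H}+\|\phi_n\|_\X^2+\|\phi_n\|_\X^{\frac{d+2}{d-2}}.
\end{align*}
Since the map $\phi\mapsto\|\phi\|_\X$ is continuous on $\X$ and $\phi_n\to\phi$, both $\|\phi_n\|_\X^2$ and $\|\phi_n\|_\X^{\frac{d+2}{d-2}}$ converge to their $\phi$-counterparts, and the left-hand side converges to $\|\K_\uf(\phi)\|_\X$ by Step~1. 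This proves the first inequality. For the second one, apply Lemma \ref{lem: local lip} to approximating sequences $\phi_n\to\phi$ and $\psi_n\to\psi$:
\begin{align*}
\|\K_\uf(\phi_n)-\K_\uf(\psi_n)\|_\X\lesssim \|\phi_n-\psi_n\|_\X\bigl(\|\phi_n\|_\X+\|\phi_n\|_\X^{\frac{4}{d-2}}+\|\psi_n\|_\X+\|\psi_n\|_\X^{\frac{4}{d-2}}\bigr),
\end{align*}
and pass to the limit using continuity of the norm together with the convergence established in Step~1.

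\textbf{Expected difficulty.} There is essentially no analytic obstacle left; all the hard work was done in Lemmas \ref{lem:integralbound} and \ref{lem: local lip}. The only point that requires a line of care is to verify that the extension is independent of the approximating sequence, which is immediate from the Lipschitz bound applied to the "shuffled" sequence $\phi_1,\tilde\phi_1,\phi_2,\tilde\phi_2,\dots$ converging to $\phi$. Nothing uses a structural property of $\K_\uf$ beyond the two inequalities proven for smooth test functions.
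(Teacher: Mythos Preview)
Your proof is correct and is precisely the density argument the paper has in mind: it states the lemma as ``a direct consequence'' of Lemmas \ref{lem:integralbound} and \ref{lem: local lip} without writing out the details. You have simply filled in the routine Cauchy--sequence extension argument.
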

\begin{lem} \label{lem: solution}
There exist constants $c>0$ and $\delta >0$ such that if $\|\uf\|_{\mathcal{H}}\leq\frac{\delta}{c}$, then there exists a unique $\phi \in \X_\delta$ with $\K_\uf(\phi)=\phi$.
\end{lem}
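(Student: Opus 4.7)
The plan is to apply the Banach fixed point theorem on the closed ball $\X_\delta$ inside the Banach space $\X$. The two preceding lemmas give exactly the two ingredients required, namely a self-map estimate and a contraction estimate on $\K_\uf$, so the argument reduces to choosing the constants $c$ and $\delta$ in a compatible way.

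First I would fix the absolute constant $C>0$ that appears (implicitly) in the bounds
\begin{align*}
\|\K_\uf(\phi)\|_{\X} &\leq C\Bigl(\|\uf\|_{\mathcal{H}}+\|\phi\|_{\X}^2+\|\phi\|_{\X}^{\frac{d+2}{d-2}}\Bigr),\\
\|\K_\uf(\phi)-\K_\uf(\psi)\|_{\X} &\leq C\|\phi-\psi\|_{\X}\Bigl(\|\phi\|_{\X}+\|\phi\|_{\X}^{\frac{4}{d-2}}+\|\psi\|_{\X}+\|\psi\|_{\X}^{\frac{4}{d-2}}\Bigr).
\end{align*}
For $\phi,\psi\in\X_\delta$, the contraction estimate yields
\[
\|\K_\uf(\phi)-\K_\uf(\psi)\|_{\X}\leq 2C\bigl(\delta+\delta^{\frac{4}{d-2}}\bigr)\|\phi-\psi\|_{\X}.
\]
Since $3\leq d\leq 6$, the exponent $\tfrac{4}{d-2}$ lies in $[1,4]$, so both $\delta$ and $\delta^{4/(d-2)}$ tend to $0$ as $\delta\to 0^+$. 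Choose $\delta_0>0$ small enough that $2C(\delta+\delta^{4/(d-2)})\leq \tfrac{1}{2}$ for all $\delta\in(0,\delta_0]$. This makes $\K_\uf$ a strict contraction on $\X_\delta$ for every such $\delta$, uniformly in $\uf$.

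Next I use the self-map estimate. For $\phi\in\X_\delta$ with $\delta\leq\delta_0$ and $\|\uf\|_{\mathcal{H}}\leq \delta/c$, we have
\[
\|\K_\uf(\phi)\|_{\X}\leq C\Bigl(\tfrac{\delta}{c}+\delta^2+\delta^{\frac{d+2}{d-2}}\Bigr)=\delta\Bigl(\tfrac{C}{c}+C\delta+C\delta^{\frac{4}{d-2}}\Bigr).
\]
Shrinking $\delta_0$ further if necessary so that $C\delta+C\delta^{4/(d-2)}\leq \tfrac{1}{4}$ for $\delta\in(0,\delta_0]$, and then setting $c:=4C$, gives $\|\K_\uf(\phi)\|_\X\leq \delta$. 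Hence $\K_\uf$ maps the complete metric space $\X_\delta$ into itself. By the Banach fixed point theorem there is a unique $\phi\in\X_\delta$ with $\K_\uf(\phi)=\phi$.

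There is no real obstacle here; the only delicate point is matching the constants so that both the self-map and contraction conditions hold simultaneously, which forces the two-step choice ``fix $\delta_0$ first, then choose $c$''. The restriction $d\leq 6$ plays its role only indirectly, through ensuring that the nonlinear exponents in the preceding two lemmas are $\geq 1$ and therefore produce powers of $\delta$ that are small; this was already used in Lemma \ref{lem:nonlinear esti}.
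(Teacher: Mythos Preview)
Your proof is correct and follows the same approach as the paper: both invoke the self-map and Lipschitz bounds from the preceding lemmas and then apply the contraction mapping principle on $\X_\delta$, with your version simply making the choice of $c$ and $\delta$ more explicit. One minor remark: your closing comment about the role of $d\leq 6$ is slightly off, since any positive exponent on $\delta$ would suffice for this fixed-point step; the dimensional restriction is really needed earlier, to make the Strichartz exponents used in Lemma~\ref{lem:nonlinear esti} admissible.
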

\begin{proof}
Lemma \ref{lem:integralbound} implies that that $\K_\uf $  maps $\X_\delta$ to $\X_\delta$, provided that $c$ is chosen sufficiently large and $\delta$ sufficiently small. Further, by shrinking $\delta$ if necessary, Lemma \ref{lem: local lip} implies that 
\begin{align*}
\|\K_\uf(\phi)-\K_\uf(\psi)\|_{\X}\leq \frac{1}{2}\|\phi-\psi\|_{\X}
\end{align*}
for $\phi, \psi \in \X_\delta.$
Hence, when restricted to $\X_\delta$, $\K_\uf$ satisfies the conditions of the contraction mapping principle.
\end{proof}
Proceeding as previously stated, we move on to showing that we can get the correction term to vanish by choosing an appropriate blow up time.
To that end, recall that due to the coordinate changes in the first section, the prescribed initial data are given by
\begin{align*}
\phi_1(0,\rho)&=\psi_1(0,\rho)-c_d= T^{\frac{d-2}{2}} u(0,T\rho)-c_d=T^{\frac{d-2}{2}} f(T\rho)-c_d
\\
\phi_2(0,\rho)&=\psi_2(0,\rho)-\frac{d-2}{2}c_d= T^{\frac{d}{2}} \partial_0 u(0,T\rho)-\frac{d-2}{2}c_d= T^{\frac{d}{2}} g(T\rho)-\frac{d-2}{2}c_d.
\end{align*}
Further, as we care about initial data which lies close to $u^1[0]$, we also compute that after our coordinate changes the blowup $u_1$ takes the form 
\begin{align*}
\psi^1_1(\tau,\rho)&=T^\frac{d-2}{2}e^{-\frac{d-2}{2}\tau} u_1(T-Te^{-\tau},Te^{-\tau}\rho)=T^\frac{d-2}{2}e^{-\frac{d-2}{2}\tau}c_d (1-T+Te^{-\tau})^{\frac{2-d}{2}}
\\
\psi^1_2(\tau,\rho)&=\frac{d-2}{2}T^\frac{d}{2}e^{-\frac{d}{2}\tau}c_d (1-T+Te^{-\tau})^{-\frac{d}{2}}.
\end{align*}
To deal with these initial data in a convenient form we introduce the mapping
$$\Uf:[1-\delta,1+\delta]\times  H^1\times L^2(\B^d_{1+\delta}) \to \mathcal{H}$$ as
\begin{align*}
\Uf(T,\vf)(\rho)=(T^{\frac{d-2}{2}}v_1(T\rho),T^{\frac{d}{2}}v_2(T\rho))+(T^{\frac{d-2}{2}}c_d,\frac{d-2}{2}T^{\frac{d}{2}}c_d)-(c_d,\frac{d-2}{2}c_d).
\end{align*}
With this, we can rewrite our initial data as 
\begin{align*}
\phi(0,\rho)=\Uf(T,(f-c_d,g-\frac{d-2}{2}c_d)).
\end{align*}
\begin{lem}\label{lem:blowup}
There exist $M>1$ and $\delta>0$ such that for $\vf \in H^1\times L^2(\B^d_{1+\delta})$ with $\|\vf\|_{H^1\times L^2(\B^d_{1+\delta})}\leq\frac{\delta}{M}$ there exist $\phi \in \X_\delta$ and $T^*\in [1-\delta, 1+\delta]$ with $\K_{\Uf(T^*,\vf)}\phi=\phi$ and $C(\phi,\Uf(T^*,\vf))=0$.
\end{lem}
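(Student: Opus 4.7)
The plan is to fix $\vf$ small and, for each $T\in[1-\delta,1+\delta]$, produce a fixed point $\phi_T=\K_{\Uf(T,\vf)}\phi_T$ in $\X_\delta$ via Lemma~\ref{lem: solution}, and then vary the scalar parameter $T$ to kill the one-dimensional unstable projection $\Cf$. To set this up, I would first verify the uniform bound
\[
\|\Uf(T,\vf)\|_{\mathcal{H}}\lesssim \|\vf\|_{H^1\times L^2(\B^d_{1+\delta})}+|T-1|
\]
for $T\in[1-\delta,1+\delta]$, by Taylor expanding the smooth scalar maps $T\mapsto T^{(d-2)/2}c_d-c_d$ and $T\mapsto T^{d/2}c_d-c_d$ around $T=1$ and using a standard rescaling/extension argument based on Lemma~\ref{lem:extension} to pass from $\B^d_{T}$ to $\B^d_1$. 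Choosing $M$ sufficiently large then gives $\|\Uf(T,\vf)\|_{\mathcal{H}}\leq\delta/c$ uniformly in $T$, so Lemma~\ref{lem: solution} produces a unique $\phi_T\in\X_\delta$ for each such $T$. A contraction-with-parameter argument based on Lemma~\ref{lem: local lip} together with the Lipschitz dependence of $\Uf(\cdot,\vf)$ on $T$ shows that $T\mapsto\phi_T$ is continuous from $[1-\delta,1+\delta]$ into $\X$.

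Since $\rg\Pf=\Span\{\gf\}$, I can write $\Cf(\phi_T,\Uf(T,\vf))=\alpha(T)\gf$ for a continuous scalar $\alpha:[1-\delta,1+\delta]\to\mathbb{C}$, so finding $T^*$ reduces to locating a zero of $\alpha$ by the intermediate value theorem. The key computation is
\[
\partial_T\Uf(T,0)\big|_{T=1}=\Bigl(\tfrac{(d-2)c_d}{2},\,\tfrac{d(d-2)c_d}{4}\Bigr)=\tfrac{(d-2)c_d}{4}\gf,
\]
which reflects the fact that the eigenvalue $\lambda=1$ arises precisely from the broken time-translation symmetry encoded by $T$. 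Consequently $\Pf\Uf(T,0)=\tfrac{(d-2)c_d}{4}(T-1)\gf+O\bigl((T-1)^2\bigr)\gf$. Using linearity of $\Uf(T,\cdot)$ in the second slot, boundedness of $\Pf$, and the quadratic/super-quadratic estimate on the Duhamel piece $\int_0^\infty e^{-\sigma}\Pf\Nf((\phi_T(\sigma),0))\,d\sigma$ provided by Lemma~\ref{lem:nonlinear esti} together with $\phi_T\in\X_\delta$, I obtain the expansion
\[
\alpha(T)=\tfrac{(d-2)c_d}{4}(T-1)+O\!\bigl(\delta/M+\delta^2\bigr).
\]

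For $M$ large and $\delta$ small the remainder is strictly smaller than $\tfrac{(d-2)c_d}{4}\delta$ at $T=1\pm\delta$, so $\alpha(1-\delta)$ and $\alpha(1+\delta)$ have opposite signs and the intermediate value theorem yields $T^*\in(1-\delta,1+\delta)$ with $\alpha(T^*)=0$; setting $\phi:=\phi_{T^*}$ then produces the pair claimed in the lemma. I expect the main obstacle to be justifying uniformity of the error in the expansion of $\alpha$: one must show that $\Pf$ applied to the rescaled data piece $\bigl(T^{(d-2)/2}v_1(T\cdot),T^{d/2}v_2(T\cdot)\bigr)$ is controlled by $\|\vf\|_{H^1\times L^2(\B^d_{1+\delta})}$ with a constant independent of $T\in[1-\delta,1+\delta]$, and that the nonlinear integral contributes at most $O(\delta^2)$ uniformly in $T$. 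Both reductions follow from Lemma~\ref{lem:extension} combined with the Strichartz machinery already developed for $\K_\uf$, provided $M$ is chosen large enough that $\delta/M$ is dominated by the geometric constant $\tfrac{(d-2)c_d}{4}$ appearing in the linear part.
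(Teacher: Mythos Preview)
Your proposal is correct and follows essentially the same approach as the paper: compute $\partial_T\Uf(T,0)|_{T=1}=\tfrac{(d-2)c_d}{4}\gf$, expand the scalar $\alpha(T)=(\Cf(\phi_T,\Uf(T,\vf)),\gf)_{\mathcal H}/\|\gf\|^2$ as $\tfrac{(d-2)c_d}{4}(T-1)+O(\delta/M+\delta^2)$, and use a one-dimensional topological argument to locate $T^*$. The only cosmetic difference is that the paper rewrites the zero condition as a fixed-point equation $T=1+F(T)$ with $|F|\lesssim\delta/M+\delta^2$ and invokes Brouwer on the interval, whereas you argue by sign change and the intermediate value theorem; in one dimension these are equivalent, and your explicit treatment of the continuity of $T\mapsto\phi_T$ via contraction-with-parameter is a detail the paper only asserts.
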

\begin{proof}
We start by computing that
\begin{align*}
\partial_T (c_dT^{\frac{d-2}{2}},\frac{d-2}{2}c_d T^{\frac{d}{2}})\big|_{T=1}=\frac{d-2}{4}c_d \gf
\end{align*}
and so, we can expand $\Uf(T,\vf)$ as
\begin{align*}
\Uf(T,\vf)=(T^{\frac{d-2}{2}}v_1(T\rho),T^{\frac{d}{2}}v_2(T\rho))+\frac{d-2}{4}c_d(T-1)\gf+(T-1)^2\ff_T
\end{align*}
with $\ff_T \in \mathcal{H} $ such that $\|\ff_T\|_{\mathcal{H}}\lesssim 1$, for $T\in [\frac{1}{2},\frac{3}{2}]$.
This allows us write 
\begin{align*}
(\Uf(T,\vf),\gf)_{\mathcal{H}}=O(\frac{\delta}{M}T^0)+\|\gf\|_{\mathcal{H}}^2 O(\delta T^0)+O(\delta^2 T^0) 
\end{align*}
for $T\in [1-\delta,1+\delta]$.
Denote by $\phi_T\in \X_\delta$ the unique fixed point of $K_{\Uf(T,\vf)}$ which exists, thanks to Lemma \ref{lem: solution}, provided that $M$ is chosen sufficiently large and $\delta$ sufficiently small.
Recall further that
\begin{align*}
\Cf(\phi_T,\Uf(T,\vf)):=\Pf\left(\Uf(T,\vf)+\int_0^\infty e^{-\sigma}\Nf((\phi_T(\sigma),0))d \sigma\right)
\end{align*}
By construction we have that
\begin{align} \label{Eq o-terms}
(\Cf(\phi_T,\Uf(T,\vf)),\gf)_{\mathcal{H}}=O(\frac{\delta}{M}T^0)+O(\delta^2T^0)+\frac{d-2}{4}c_d\|\gf\|^2(T-1)
\end{align}
and moreover the $O$-terms in \eqref{Eq o-terms} are continuous functions of $T$.
Thus, the vanishing of $(\Cf(\phi_T,\Uf(T,\vf)),\gf)_{\mathcal{H}}$ is equivalent to the equation $T-1= F(T)$ for continuous $F$ with $|F(T)|\lesssim \frac{\delta}{M}+\delta^2$. Consequently, if we choose $\delta$ and $M$ accordingly then $1+F$ is a continuous map from $[1-\delta,1+\delta]$ to that same interval and therefore it must have a fixed point.
\end{proof}
We now come to the penultimate result which shows uniqueness of our solutions.
\begin{lem}
Let $\uf \in \mathcal{H}$. Then there exists at most one $\phi \in \X$ with $\K_\uf(\phi)=\phi$ and $\Cf(\phi,\uf)=0$.
\end{lem}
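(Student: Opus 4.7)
The plan is a local-in-time bootstrap built on the simplification that the correction term in $\K_\uf$ drops out whenever $\Cf(\phi,\uf)=0$: in that case
\[
\phi=[\Phi]_1,\qquad \Phi(\tau):=\Sf(\tau)\uf+\int_0^\tau \Sf(\tau-\sigma)\Nf((\phi(\sigma),0))\,d\sigma.
\]
Given two such solutions $\phi_1,\phi_2$, define
\[
T^\ast:=\sup\{T\geq 0:\phi_1=\phi_2\text{ a.e.\ on }[0,T]\}.
\]
I will show that whenever $\phi_1=\phi_2$ on $[0,T]$, the equality extends to $[0,T+\delta_0]$ for some $\delta_0>0$ that is \emph{uniform} in $T$. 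Since $T=0$ is trivially admissible, a continuity argument will then force $T^\ast=\infty$.

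Assume $\phi_1=\phi_2$ on $[0,T]$; then $\Phi_1(T)=\Phi_2(T)$, and the semigroup property combined with Duhamel's formula shows that for $s\in[0,\delta]$
\[
\Phi_1(T+s)-\Phi_2(T+s)=\int_0^s \Sf(s-r)\bigl[\Nf((\phi_1(T+r),0))-\Nf((\phi_2(T+r),0))\bigr]\,dr.
\]
I split this using $\I=(\I-\Pf)+\Pf$. For the stable part, the shifted inhomogeneous Strichartz estimate of Theorem \ref{thm:strichartz}, applied on the interval $[0,\delta)$ and with $(\I-\Pf)$ commuting with $\Sf$, gives
\[
\bigl\|[(\I-\Pf)(\Phi_1-\Phi_2)(T+\cdot)]_1\bigr\|_{\X([0,\delta])}\lesssim \int_T^{T+\delta}\|\Nf((\phi_1,0))-\Nf((\phi_2,0))\|_{\mathcal H}\,d\sigma.
\]
For the unstable part I use $\Sf(t)\Pf=e^{t}\Pf$ and the pointwise bound $\|[\Pf\ff]_1\|_{L^q(\B^d_1)}\lesssim\|\ff\|_{\mathcal H}$ (valid for every $q\in[1,\infty]$, as recorded in the proof of Lemma \ref{lem:integralbound}) together with $e^{s-r}\leq e^\delta$ to obtain
\[
\bigl\|[\Pf(\Phi_1-\Phi_2)(T+\cdot)]_1\bigr\|_{\X([0,\delta])}\lesssim e^{\delta}\bigl(\delta^{1/2}+\delta^{(d-2)/(d+2)}\bigr)\int_T^{T+\delta}\|\Nf((\phi_1,0))-\Nf((\phi_2,0))\|_{\mathcal H}\,d\sigma,
\]
the two $\delta$-powers coming from the time integration in the two Strichartz pairs entering the definition of $\X$.

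Adding both contributions and invoking the Lipschitz bound in Lemma \ref{lem:nonlinear esti} produces
\[
\|\phi_1-\phi_2\|_{\X([T,T+\delta])}\leq C_\delta\,\|\phi_1-\phi_2\|_{\X([T,T+\delta])}\,M(T,\delta),
\]
where $C_\delta$ is bounded as $\delta\to 0$ and $M(T,\delta):=\sum_{j=1,2}\bigl(\|\phi_j\|_{\X([T,T+\delta])}+\|\phi_j\|_{\X([T,T+\delta])}^{4/(d-2)}\bigr)$. Because $\phi_1,\phi_2\in\X$ and the time integrability in the definition of $\X$ involves only finite exponents, absolute continuity of the Lebesgue integral provides a single $\delta_0>0$, uniform in $T\geq 0$, that makes $M(T,\delta_0)$ smaller than $1/(2C_{\delta_0})$. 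This forces $\|\phi_1-\phi_2\|_{\X([T,T+\delta_0])}=0$, completing the induction step and hence the proof.

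The main obstacle is the treatment of $\Pf(\Phi_1-\Phi_2)$: since $\Sf(\tau)\Pf$ grows like $e^{\tau}$, there is no Strichartz-type bound on the unstable component, so a global Gr\"onwall argument on $[0,\infty)$ is unavailable. It is precisely for this reason that uniqueness must be established locally in time, where the exponential factor is harmless and the $\delta^{1/p}$ gained from time integration supplies the smallness needed to absorb the $\Pf$-contribution into the contraction argument.
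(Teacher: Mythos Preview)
Your proof is correct and follows essentially the same strategy as the paper's: both arguments drop the correction term via $\Cf=0$, split the Duhamel integral into $(\I-\Pf)$ and $\Pf$ parts, control the former by the inhomogeneous Strichartz estimate and the latter by the crude $e^{\tau}$ growth on the one-dimensional range of $\Pf$, and then obtain a local contraction on short time intervals which is iterated. The only cosmetic differences are that you restart the Duhamel formula at each step via the semigroup property and extract a uniform step size $\delta_0$ from absolute continuity, whereas the paper fixes an arbitrary $\tau_0$, partitions $[0,\tau_0]$ into finitely many equal pieces with $\|\phi\|_{\X(I_n)}+\|\psi\|_{\X(I_n)}\le\delta$, and absorbs the exponential factor into the fixed constant $e^{\tau_0}$.
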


\begin{proof}
  Let $\uf \in \mathcal{H}$ and assume that we are given two fixed
  points $\psi,\phi\in \X$ of $\K_\uf$ with
  $\Cf(\psi,\uf)=\Cf(\phi,\uf)=0$. Let
  $\tau_0\geq 0$ be arbitrary and note that for any $\delta>0$ there
  exists an $N_\delta\in\mathbb N$ such that
\begin{equation}\label{smallnes assumption}
\|\psi\|_{\X(I_n)}+\|\phi\|_{\X(I_n)}\leq \delta,
\end{equation}
for all $n\in \{0,1,2,\dots,N_\delta-1\}$,
 where 
\begin{align*}
\|\phi\|_{\X(I)}:=\|\phi\|_{L^2(I)L^{\frac{2d}{d-3}}(\B^d_1)}+\|\phi\|_{L^{\frac{d+2}{d-2}}(I)L^{\frac{2d+4}{d-2}}(\B^d_1)}
\end{align*}
and $I_n:=[n\frac{\tau_0}{N_\delta}, (n+1)\frac{\tau_0}{N_\delta}]$.
From the calculations done in the proof of Lemma \ref{lem: local lip} we then see that
\begin{align*}
&\quad\|\psi(\tau)-\phi(\tau)\|_{L^{\frac{2d}{d-3}}(\B^d_1)}+\|\psi(\tau)-\phi(\tau)\|_{L^{\frac{2d+4}{d-2}}(\B^d_1)}
\\
&\lesssim \left\|\int_0^\tau e^{\tau-\sigma}[N(\phi(\sigma))-N(\psi(\sigma))] d\sigma\right\|_{L^2(\B^d_1)}.
\end{align*}
Thus,
\begin{align*}
\|\psi-\phi\|_{\X(I_0)}
&\lesssim \left\|\int_0^\tau e^{\tau-\sigma} \|N(\phi(\sigma))-N(\psi(\sigma))\|_{L^2(\B^d_1)}d \sigma\right\|_{L^2_\tau(I_0)}
\\
&\quad+\left\|\int_0^\tau e^{\tau-\sigma} \|N(\phi(\sigma))-N(\psi(\sigma))\|_{L^2(\B^d_1)}d \sigma\right\|_{L^{\frac{d+2}{d-2}}_\tau(I_0)}
\\
&\lesssim  e^{\tau_0} \int_0^{\tau_0/N_\delta}\|N(\phi(\sigma))-N(\psi(\sigma))\|_{L^2(\B^d_1)}d \sigma
\\
&\lesssim 
\|\phi-\psi\|_{\X(I_0)}\left(\|\phi\|_{\X(I_0)}+\|\phi\|_{\X(I_0)}^{\frac{4}{d-2}}+\|\psi\|_{\X(I_0)}+\|\psi\|_{\X(I_0)}^{\frac{4}{d-2}}\right).
\end{align*}
By choosing $\delta$ sufficiently small, we see that
\begin{align*}
\|\phi-\psi\|_{\X(I_0)}\leq \frac{1}{2} \|\phi-\psi\|_{\X(I_0)}
\end{align*}
and so $\|\phi-\psi\|_{\X(I_0)}=0$.
Continuing inductively we therefore conclude that
\begin{align*}
\|\phi-\psi\|_{\X([0,\tau_0])}=0
\end{align*}
and as $\tau_0>0$ was chosen arbitrarily, the claim follows.
\end{proof}

Now, we are finally able to prove Theorem \ref{thm: stability}.
\begin{proof}[Proof of Theorem \ref{thm: stability}]
Let $M>1$ be sufficiently large and $\delta>0$ sufficiently small. Further,  for $\vf= (f,g)-u^1[0]$ with
 $$\|\vf\|_{H^1\times L^2(\B^d_{1+\delta})}\leq \frac{\delta}{M},$$ 
 let $u$ be the corresponding solution, $\phi$ be the associated function in similarity coordinates, and $T$ be the corresponding blowup time from Lemma \ref{lem:blowup}. Then
\begin{align*}
\delta^2&\geq\|\phi\|_{L^2(\R_+)L^{\frac{2d}{d-3}}(\B^d_1)}^2=\int_0^\infty\|\psi(\sigma,.)-c_d\|_{L^{\frac{2d}{d-3}}(\B^d_1)}^2 d\sigma
\\
&= \int_0^T(T-t)^{-1} \|\psi_1(-\log(T-t)+\log(T),.)-c_d\|_{L^{\frac{2d}{d-3}}(\B^d_1)}^2 dt
\\
&= \int_0^T \|(T-t)^{\frac{2-d}{2}}\psi_1(-\log(T-t)+\log(T),\frac{.}{T-t})-c_d(T-t)^{\frac{2-d}{2}}\|_{L^{\frac{2d}{d-3}}(\B^d_{T-t})}^2 dt
\\
&= \int_0^T\|u(t,.)-u^T(t,.)\|_{L^{\frac{2d}{d-3}}(\B^d_{T-t})}^2 dt.
\end{align*}
\end{proof}

\bibliography{references}
\bibliographystyle{plain}

\end{document}